\numberwithin{equation}{section}
\newtheorem{theorem}{Theorem}[section]
\newtheorem{proposition}[theorem]{Proposition}
\newtheorem{lemma}[theorem]{Lemma}
\theoremstyle{definition}
{
\newtheorem{remark}[theorem]{Remark}

}}
\theoremstyle{definition}
\newtheorem{defn}[theorem]{Definition}}
\newcommand{\len}[1]{\ell\left({#1}\right)}
\newcommand{\cal}{\mathcal}
\newcommand{\LL}{{\cal L}}
\newcommand{\Nn}{{\mathbb{N}}}
\newcommand{\Rr}{{\mathbb{R}}}
\newcommand{\Zz}{{\mathbb{Z}}}
\def\dist{\operatorname{dist}}
\def\supp{\operatorname{supp}}
\def\Leb{\operatorname{Leb}}
\newcommand{\abs}[1]{\left| #1\right|}
\newcommand{\norm}[1]{\left\| #1\right\|}
\newcommand{\comment}[1]{}
\begin{document}

\title[Hyperbolic polygonal billiards]{Hyperbolic polygonal billiards with finitely many ergodic SRB measures}

\date{\today}

\author[Del Magno]{Gianluigi Del Magno}
\address{Universidade Federal da Bahia, Instituto de Matem\'atica\\
Avenida Adhemar de Barros, Ondina \\
40170--110 Salvador, BA, Brasil}
\email{gdelmagno@ufba.br}

\author[Lopes Dias]{Jo\~ao Lopes Dias}
\address{Departamento de Matem\'atica and CEMAPRE, ISEG\\
Universidade de Lisboa\\
Rua do Quelhas 6, 1200--781 Lisboa, Portugal}
\email{jldias@iseg.ulisboa.pt}

\author[Duarte]{Pedro Duarte}
\address{Departamento de Matem\'atica and CMAF \\
Faculdade de Ci\^encias\\
Universidade de Lisboa\\
Campo Grande, Edificio C6, Piso 2\\
1749--016 Lisboa, Portugal 
}
\email{pmduarte@fc.ul.pt}

\author[Gaiv\~ao]{Jos\'e Pedro Gaiv\~ao}
\address{Departamento de Matem\'atica and CEMAPRE, ISEG\\
Universidade de Lisboa\\
Rua do Quelhas 6, 1200--781 Lisboa, Portugal}
\email{jpgaivao@iseg.ulisboa.pt}

\begin{abstract}
We study 
polygonal billiards with reflection laws contracting the reflected angle towards the normal. It is shown that if a polygon does not have parallel sides facing each other, then the corresponding billiard map has finitely many ergodic SRB measures whose basins cover a set of full Lebesgue measure. 
\end{abstract}

\maketitle

\section{Introduction}
\label{sec:introduction}

The study of the asymptotic behaviour of billiards is an important subject in the theory of dynamical systems. Billiards exhibit a rich variety of statistical properties depending on the geometry of their tables and the reflection law considered.

In this work, we are interested in 
polygonal 
tables. The billiard map of a polygonal billiard with the standard reflection law (the angle of reflection equals the angle of incidence) 
is conservative and non-chaotic: it
preserves a measure that is absolutely continuous with respect to the Lebesgue measure, and
all its Lyapunov exponents are equal to zero.

A completely different dynamics arises when 
the reflection law is contracting~\cite{markarian10}, 
i.e. when the reflection angle measured from the normal is a contraction of the incidence angle.
In this case, the billiard is a dissipative system: its map 
does not longer preserve an absolutely continuous measure, and may have
attractors~\cite{arroyo09,arroyo12,MDDGP12}. Indeed, if there are no period two orbits, then the map has 
a uniformly hyperbolic 
attractor~\cite{MDDGP13}.
Notice that period two orbits correspond to collisions perpendicular to a pair of parallel sides of the billiard table. These orbits are parabolic, and their union forms an attractor.

For billiards in generic convex polygons with a strong contracting reflection law, 
we have recently proved the existence of countably many ergodic Sinai-Ruelle-Bowen measures (SRB), each one supported on a uniformly hyperbolic attractor~\cite{MDDGP13}. This result is significantly extended in the current paper
by enlarging the class of allowed polygons, including now non-convex polygons, and more importantly by removing any restriction 
on the contraction factor of the reflection law (cf.~\cite{MDDGP12,MDDGP13,MDDGP14,MDDGP15}). 
In addition, we establish that the basins of the ergodic SRB measures cover a set of full Lebesgue measure. The full result is the following.

\begin{theorem}\label{main thm}
For every polygon without parallel sides facing each other and every contracting reflection law, the corresponding billiard map has a hyperbolic  attractor supporting finitely many ergodic SRB measures whose basins cover a set of full Lebesgue measure. Every ergodic SRB measure admits a decomposition into finitely many Bernoulli components, each component having exponential decay of correlations for H\"older observables.
Every SRB measure of the billiard map is a convex combination of the ergodic SRB measures. Finally, the set of periodic points is dense in the attractor.
\end{theorem}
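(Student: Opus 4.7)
The plan is to combine a uniform hyperbolicity statement for the attractor, extending the one proved in~\cite{MDDGP13} to the present more general class of polygons and contraction factors, with the by-now-standard theory of SRB measures for piecewise smooth hyperbolic maps with singularities in the framework of Pesin, Katok--Strelcyn, Saitaev, and Chernov--Young. The first step is to verify that, under the hypothesis of no facing parallel sides, the billiard map $T$ on the collision space $M$ admits invariant unstable and stable cone fields with uniform expansion and contraction on the attractor $\Lambda=\bigcap_{n\geq 0}T^n(M)$. Because the contraction factor is arbitrary and the polygon may be non-convex, hyperbolicity will typically have to be established after a uniform number of iterates rather than pointwise; the geometric input is that between successive reflections off non-parallel sides the derivative cocycle cannot remain parabolic, so hyperbolicity accumulates along every long orbit segment.

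Once uniform hyperbolicity and good control of the singular sets $\Sscr^\pm$ of $T^{\pm 1}$ are in place, the plan is to build local unstable manifolds through Lebesgue-a.e.\ point in a neighbourhood of $\Lambda$, push Lebesgue measure forward along them, and take Cesaro limits to produce SRB measures. The ergodic decomposition yields a family of ergodic SRB components; to prove finiteness the key quantitative ingredient is a uniform lower bound on the Lebesgue measure of every ergodic basin. This comes from combining absolute continuity of the stable and unstable holonomies (Pesin-style arguments adapted to systems with singularities) with a uniform lower bound on the length of local unstable manifolds, the latter using that the singular set of $T^n$ has at most $Cn$ smooth components for a constant $C$ depending only on the number of sides.

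Full Lebesgue coverage of the union of basins then follows from absolute continuity of the unstable foliation plus the fact that Lebesgue-a.e.\ point is eventually captured in a hyperbolic block that meets some unstable manifold in the attractor with positive conditional measure. For the Bernoulli property and exponential decay of correlations for H\"older observables I would invoke Young's tower construction, verifying the standard axioms (hyperbolic product structure, a Markov return set with exponential tails of the return time, bounded distortion, and sufficient transversality of the singular set to the unstable cones) for each ergodic SRB component; the finite Bernoulli decomposition is then the usual spectral picture for mixing hyperbolic towers, the period being read off from the return-time lattice. Density of periodic points in $\Lambda$ is a corollary of uniform hyperbolicity through the shadowing/closing lemma for maps with singularities.

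The main obstacle I expect is sharpening ``countably many'' to ``finitely many'' ergodic SRB measures. Countability follows readily from the countable combinatorics of the singular set and ergodic decomposition, but finiteness requires a lower bound on the size of unstable manifolds in the attractor that does \emph{not} degenerate near singular points. The ``no facing parallel sides'' hypothesis should yield such a bound through a geometric estimate showing that the iterated singularity set cannot become tangent to the unstable cone field with arbitrarily small angle, but translating this into a clean quantitative statement that is robust with respect to both non-convexity and arbitrarily weak contraction is where most of the technical work is likely to concentrate.
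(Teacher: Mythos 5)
Your overall architecture (hyperbolicity from \cite{MDDGP13}, then the Pesin--Sataev--Chernov machinery for piecewise smooth hyperbolic maps, then a Bonatti--Viana style argument for the basins) matches the paper's, but there is a genuine gap at exactly the point you flag as the main obstacle, and you locate the difficulty in the wrong place. The ingredient that makes everything work --- finiteness of the ergodic SRB measures, the growth lemma, and ultimately the decay of correlations --- is the \emph{$n$-step expansion condition}: for some $n$, the sum over the connected components $\gamma$ of $\Gamma\setminus N_n^+$ of the reciprocals of the minimal expansion of $D\Phi^n$ along $\gamma$ is uniformly less than $1$ for short horizontal curves $\Gamma$. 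To verify it one must beat the local fractioning of unstable curves by the singular set, i.e.\ bound the number of pieces into which a short unstable curve near a point of $N_n^+$ is cut. Your proposed input --- ``the singular set of $T^n$ has at most $Cn$ smooth components'' --- is false as a global count: $N_n^+=\bigcup_{k<n}\Phi^{-k}(N^+)$ and each pullback multiplies the number of smooth pieces by roughly the number of branches of $\Phi^{-1}$, so the global count grows exponentially. What is true, and what the paper spends all of Section~3 proving, is the \emph{local} statement: the number of regular sectors of order $n$ meeting at any single vertex (the branching number $b_n$) satisfies $b_n\le (2n-1)b_1$. The proof is not a soft transversality estimate; it uses that singular curves are strictly decreasing while images of sector boundaries are increasing, so that a sector which acquires new singularities of order $n+1$ must contain a horizontal curve through its vertex, and only two sectors at each vertex can do so. Without this lemma (or an equivalent complexity bound) your route to finiteness and to the tower/coupling axioms does not close.

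Two smaller misattributions are worth correcting. First, the ``no parallel sides facing each other'' hypothesis is used only to exclude parabolic period-two orbits and hence to get uniform hyperbolicity (with the horizontal direction as the unstable one); it plays no role in preventing tangencies between the iterated singular set and the unstable cones --- that transversality is automatic, since singular curves are strictly decreasing and unstable curves are horizontal. Second, near tangential singularities ($\bar\theta_1=\pm\pi/2$) the expansion actually blows up (Lemma on divergence of $\alpha_n$ near primary non-regular sectors), so the components of $\Gamma\setminus N_n^+$ accumulating there are harmless; the danger comes entirely from the count of regular sectors, which is where the linear branching bound enters. Your remaining steps (Ces\`aro limits of pushed-forward Lebesgue measure on unstable curves, absolute continuity of the stable holonomy, a density-point argument for full Lebesgue coverage, Chernov--Zhang or Young-tower methods for exponential mixing, and Pesin's closing-type result for density of periodic points) are all consistent with the paper once Condition~H is available, but each of them consumes the growth lemma, so none of them can be carried out until the branching estimate is supplied.
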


Besides its specific interest, the previous result may prove useful in studying polygonal billiards with the standard reflection law, because they lie at the boundary of the class of billiards considered in this paper. 

A long standing conjecture of J.~Palis ~\cite{Palis} states there exists a dense set of dynamical systems such that each of them have finitely many attractors with ergodic SRB measures whose basins of attraction cover a set with full Lebesgue measure. Since polygons without parallel sides facing each other are dense in the space of all polygons, Theorem~\ref{main thm} verifies this conjecture for 
polygonal billiards with contracting reflection laws.

Polygonal billiards are piecewise smooth systems: they have discontinuities corresponding to trajectories reaching a corner of the table. Discontinuities represent an obstacle to hyperbolicity in that they may prevent the system from having local invariant manifolds, or local invariant manifolds of uniform size.
The local fractioning of the unstable manifolds produced by the discontinuities is measured by the branching number of the singular sets (see section~\ref{sec:branching}). The control of the growth of this number is key to guarantee that the expansion along the unstable direction prevails over the fractioning of local unstable manifolds caused by the discontinuities, and
allows us to extend the results of~\cite{MDDGP13}. Our proof relies on results for general hyperbolic piecewise smooth maps of Pesin~\cite{pe} and Sataev~\cite{Sataev92}.

The rest of the paper is organized as follows. In Section~\ref{sec:billiard map} we introduce the billiard maps and other basic definitions.
The growth of the branching number is studied in Section~\ref{sec:branching}. This result allows us to establish the existence of finitely many ergodic SRB measures in Section~\ref{sec:SRB}.
Finally, in Section~\ref{sec:Basins}, we show that the basins of 
these measures cover the entire phase space up to a set of zero Lebesgue measure.

\section{Billiard map} 
\label{sec:billiard map}

Let $P$ be a polygonal domain (open and connected set) of $ \Rr^{2} $ with $d$-sides and perimeter equal to one. The billiard in $ P $ with the specular reflection law is the flow on the unit tangent bundle of $ P $ generated by the motion of a free point-particle in $ P $ with specular reflection at $ \partial P $, i.e., the angle of reflection equals the angle of incidence. The corresponding billiard map $ \Phi_{P} $ is the first return map on $ M $, the set of unit vectors attached to $ \partial P $ and pointing inside $ P $.

Each element of $ x \in M $ can be identified with a pair $ (s,\theta) $, where $ s $ is the arclength parameter of $ \partial P $ of the base point of $ x $, and $ \theta $ is the angle formed by $ x $ with the positively oriented tangent to $ \partial P $ at $ s $. Accordingly, we can write
$$ 
M = S^1 \times [-\pi/2,\pi/2].
$$

The domain of $ \Phi_{P} $ does not coincide with the entire $ M $. To specify it, we first introduce the sets $ V $ and $ S $. Let $ 0 = s_{1} < \cdots < s_{d} < 1 $ be the values of the arc-length parameter corresponding to the vertices of $ P $. The set $ V $ is given by 
$$ 
V = \{s_{1},\ldots,s_{d}\} \times (-\pi/2,\pi/2),
$$ 
whereas the set $ S $ is the subset of $ M $ consisting of elements whose forward trajectory hit a vertex of $ P $ at the first collision with $ \partial P $.
Define
$$
N = V \cup S \qquad \text{and} \qquad N^{+} = N \cup \partial M.
$$ 
Both sets $ N $ and $ N^{+} $ consist of finitely many smooth curves 
~\cite[Proposition~2.1]{MDDGP13}.

The map $ \Phi_{P} $ is defined on $ M \setminus N^+ $, and is a piecewise smooth map 
with singular set $ N^{+} $ in the sense of Definition~\ref{def:piecewise map}. Observe that $ \Phi_{P}(x) $ is the unit vector corresponding to the first collision of the trajectory of $ x \in M \setminus N^{+}$ with $ \partial P $. 
For a detailed definition of $ \Phi_{P} $, we refer the reader to~\cite{MDDGP13} for polygonal table, and to \cite{CM} for more general tables. 

A \emph{reflection law} is a function $ f \colon (-\pi/2,\pi/2) \to (-\pi/2,\pi/2) $. For example, the specular reflection law corresponds to the identity function $ f(\theta)=\theta $. Let $ R_{f} \colon M \to M $ be the map $ R_{f}(s,\theta)=(s,f(\theta)) $. The billiard map for the polygon $ P $ with reflection law $ f $ is the map $\Phi_{f,P} \colon M \setminus N^+ \to M $ given by
\[
\Phi_{f,P} = R_{f} \circ \Phi_{P}.
\]
This map is just the first return map on $ M $ of the billiard flow in the polygon $ P $ with refection law $ f $ (see Figure~\ref{fig:poly}). We call $\Phi_{f,P}$ the \textit{billiard map of $P$ with reflection law $f$}.

\begin{figure}
  \begin{center}
  \includegraphics[width=2in]{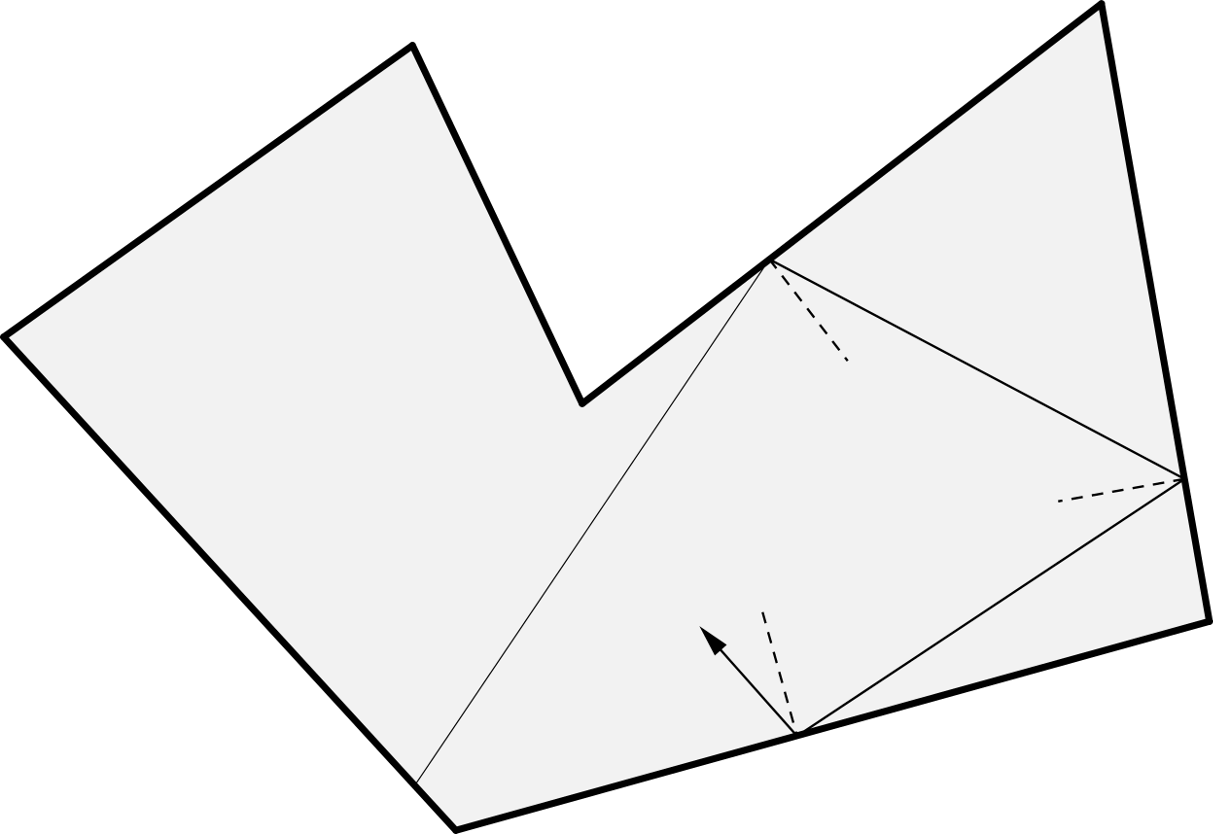}
  \end{center}
  \caption{Billiard flow with a contracting reflection law in a polygon without parallel sides facing each other.}
  \label{fig:poly}
\end{figure}

If $ f $ is differentiable, then so is $ \Phi_{f,P} $. In this case, the explicit expression for the derivative of $ \Phi_{f,P} $ can be easily computed (for the derivative of $ \Phi_{P} $, see~\cite{CM}): 
\begin{equation} 
\label{eq:derivative}
D\Phi_{f,P}{(s,\theta)}
=-\left( 
\begin{array}{cc}
\dfrac{\cos \theta}{\cos \bar{\theta}_{1}} & \dfrac{t(s,\theta)}{\cos \bar{\theta}_{1}} \\ [1em]
0 & f'(\bar{\theta}_{1}) 
\end{array}
\right), 
\end{equation}
where $ (\bar{s}_{1},\bar{\theta}_{1}) = \Phi_{P}(s,\theta) $, $ (s_{1},\theta_{1}) = \Phi_{f,P}(s,\theta) $, and $t(s,\theta)$ denotes the Euclidean distance in $ \Rr^{2} $ between the points of $\partial P$ with coordinates $s$ and $s_{1}$. 

Given a differentiable reflection law $ f $, we define 
\[ 
\lambda(f) = \sup_{\theta \in (-\pi/2,\pi/2)} |f'(\theta)|.
\] 
A differentiable reflection law is called \emph{contracting} if $\lambda(f) < 1$. The simplest example of a contracting reflection law is $ f(\theta) = \sigma \theta $ with $0 < \sigma < 1$~\cite{arroyo09,arroyo12,MDDGP12,markarian10}.

{\bf Standing assumptions on $ f $:} we assume throughout the paper that $ f $ satisfies the following condition: 
\begin{enumerate}
\item $ f $ is a $ C^{2} $ embedding from $ [-\pi/2,\pi/2] $ to $ [-\pi/2,\pi/2] $,
\item $ f $ is contracting,
\item $ f(0)=0 $,
\item $ f'(\theta)>0 $ for $ \theta \in (-\pi/2,\pi/2) $.
\end{enumerate}

Since $ f'>0 $, all the entries of $ D\Phi_{f,P} $ have the same sign. This simple fact will play an important role in the arguments presented in the next section.

\section{Growth of the branching number}
\label{sec:branching}
To simplify our notation, from now on we shall write $\Phi$ instead of  $ \Phi_{f,P}$. For $ n \ge 1 $, define 
\[ 
N^{+}_{n} = N^{+} \cup \Phi^{-1}(N^{+}) \cup \cdots \cup \Phi^{-n+1}(N^{+}),
\] 
and
\[ 
S^{+}_{n} = S \cup \Phi^{-1}(S) \cup \cdots \cup \Phi^{-n+1}(S).
\]           
The set $ N^{+}_{n} $ contains all the points of $ M $ where the map $ \Phi^{n} $ is not defined. Since $ N = V \cup S $ and $ \Phi^{-1}(V \cup \partial M) = \emptyset $, it follows that
\[
N^{+}_{n} = V \cup \partial M \cup S^{+}_{n}.
\] 
As a direct consequence of the definition of $ S^{+}_{n} $, we have 
\[ 
S^{+}_{n+k} = S^{+}_{n} \cup \Phi^{-n}(S^{+}_{k}) \qquad \text{for } k > 0.
\]
Hence, $ N^{+}_{n+k} \setminus N^{+}_{n} = S^{+}_{n+k} \setminus S^{+}_{n} \subset \Phi^{-n}(S^{+}_{k}) $ and
\begin{equation}
\label{eq:image}
\Phi^{n}(N^{+}_{n+k} \setminus N^{+}_{n}) \subset S^{+}_{k}.
\end{equation}

\begin{defn} 
\label{de:sector}
A \emph{sector of order $ n $ with vertex} 
$ x \in N^{+}_{n} $ is a connected component $ \Delta $ of $ U \setminus N^{+}_{n} $ with $ U \subset M $ being an open ball centered at a point $ x \in N^{+}_{n} $ such that the closure of $ U $ intersects only smooth components of $ N^{+}_{n} $ meeting at $ x $ (see Figure~\ref{fig:sector}).

The smooth curves forming the boundary of $ \Delta $ and meeting at $ x $ are called the \emph{boundary curves} of $ \Delta $. A sector $ \Delta' \subset \Delta $ of order greater than $ n $ with vertex $ x $ is called a \emph{sub-sector} of $ \Delta $.
\end{defn}

\begin{figure}
  \begin{center}
  \includegraphics[width=2in]{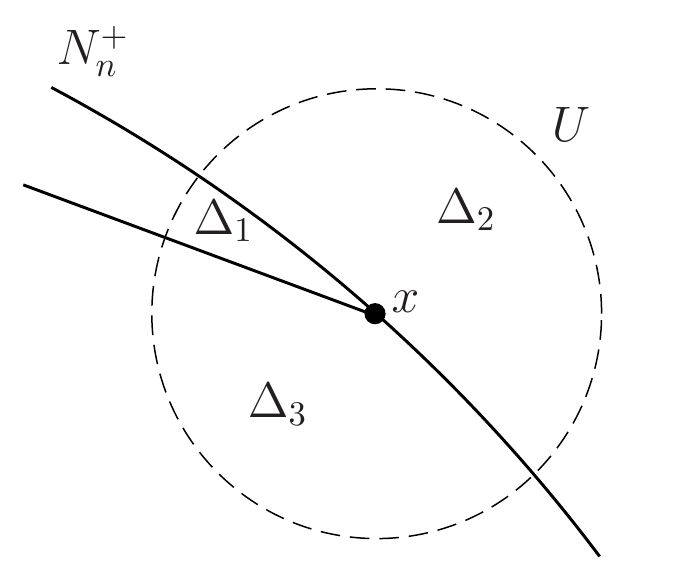}
  \end{center}
  \caption{Sectors $\Delta_1$, $\Delta_2$ and $\Delta_3$ sharing the same vertex $x\in N_n^+$.}
  \label{fig:sector}
\end{figure}
 
From the definition above, it follows that $ \Phi^{n} \colon \Delta \to \Phi^{n}(\Delta) $ is a $ C^{2} $ diffeomorphism for every sector $ \Delta $ of order $ n $. In particular, the first $ n $th iterates of points of $ \Delta $ visit the same sequence of sides of $ P $. Moreover, if $ \Gamma $ is a boundary curve of $ \Delta $ such that $ \Gamma \not \subset \partial M $, then either $ \Gamma \subset V $ or $ \Phi^{k-1}(\Gamma) \subset S^{+}_{1} $ for some $ 1 \le k \le n $. In other words, all the trajectories starting at points of a boundary curve not contained in $ \partial M $ either emerge from the same vertex or hit the same vertex at the $ k $th collision with $ \partial P $.

\begin{defn}
A $ C^{1} $ curve $ t \mapsto \gamma(t) = (s(t),\theta(t)) \in M $ is called \emph{increasing} if $ s'(t)\theta'(t) \ge 0 $ for every $ t $, and is called \emph{strictly increasing} if $ s'(t)\theta'(t) > 0 $ for every $ t $. A \emph{decreasing} curve and a \emph{strictly decreasing} curve are defined similarly by requiring that $ s'(t)\theta'(t) \le 0 $ for every $ t $ and $ s'(t)\theta'(t) < 0 $ for every $ t $, respectively. A curve $ t \mapsto \gamma(t) = (s(t),\theta(t)) \in M $ is called \emph{horizontal} if there exists a constant $ c $ such that $ \theta(t) = c $ for every $ t $.
\end{defn}

\begin{remark}
\label{re:monotonicity}	
The set $ V \cup S^{+}_{n} $ consists of finitely many strictly decreasing $ C^{2} $ curves~\cite[Proposition~2.3]{MDDGP15}. 
\end{remark}

\begin{lemma}
	\label{le:boundary}
	We have $ \overline{S^{+}_{n}} \cap \partial M = \emptyset $ for $ n \in \Nn $.
\end{lemma}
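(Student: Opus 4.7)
The plan is to proceed by induction on $n$.

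\textbf{Base case} ($n=1$). Here $S^+_1=S$. Given $(s,\theta)\in S$, the forward straight-line trajectory meets some vertex $v$ of $P$ at the first collision, at Euclidean distance $L\le\mathrm{diam}(P)$. Setting up local coordinates on the side $e$ containing $s$ (with $e$ on the $x$-axis and $P$ above), the vertex $v$ has perpendicular height $h_{v,e}\ge 0$ over the line of $e$, and the geometry of the segment gives $L\cos\theta=h_{v,e}$. Since $\cos\theta>0$, the trajectory lies in $\{y>0\}$ for $t>0$, so $h_{v,e}=0$ is impossible unless $v$ lies on $\ell(e)$, in which case the trajectory cannot reach it. Over the finitely many vertex-side pairs with $v\notin\ell(e)$ the quantity $h_{v,e}$ has a strictly positive minimum $h_0$, so
\[
\cos\theta \;\ge\; \frac{h_0}{\mathrm{diam}(P)} \;>\;0
\]
uniformly on $S$, and hence $\overline S\subset S^1\times[-\pi/2+\delta_1,\pi/2-\delta_1]$ for some $\delta_1>0$ depending only on $P$.

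\textbf{Inductive step.} Assume $\overline{S^+_n}\cap\partial M=\emptyset$. Writing $S^+_{n+1}=S\cup\Phi^{-1}(S^+_n)$, the first piece is handled by the base case, so it suffices to rule out sequences $x_k=(s_k,\theta_k)\in\Phi^{-1}(S^+_n)$ converging to some $x^*\in\partial M$. Passing to subsequences, I would assume $\theta_k\to\pi/2$ (the case $-\pi/2$ is symmetric), that all $s_k$ lie on a single side $e$, and that all first-collision sides coincide with a single side $e'$. An explicit calculation of the straight-line trajectory in the near-tangent direction shows that the first-collision point tends to the vertex $v_2$ at the forward endpoint of $e$ and the pre-reflection angle $\bar\theta_{1,k}$ tends to $\pi/2-\alpha$, where $\alpha$ is the interior angle of $P$ at $v_2$ between $e$ and $e'$. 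Therefore
\[
y_k:=\Phi(x_k)\ \longrightarrow\ y^*:=\bigl(s_{v_2},\,f(\pi/2-\alpha)\bigr)\in V.
\]
The point $y^*$ is not in $\partial M$: since $f$ is a strict contraction with $f(0)=0$, we have $|f(\pi/2-\alpha)|<\pi/2-\alpha<\pi/2$.

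The inclusion $y_k\in S^+_n$ now gives $y^*\in\overline{S^+_n}$, which the inductive hypothesis alone does not contradict. The plan to close the argument is to iterate the limit: within the sector of $M\setminus N^+$ containing the tail of $(y_k)$ the map $\Phi$ extends continuously to $y^*$, so one obtains a well-defined finite list of limit iterates $y^*=z_1,z_2,\dots,z_n$, each determined only by $P$, $f$, and the side data $(e,e')$. Because $x_k\in\Phi^{-1}(S^+_n)$ forces $\Phi^j(x_k)\in S$ for some $j\in\{1,\dots,n\}$, passing to the limit yields $z_j\in\overline S$ for that $j$. \textbf{The main obstacle} is then to exclude $z_j\in\overline S$: this reduces to showing that the trajectory from $v_2$ produced by $j-1$ successive contracted reflections never points exactly at another vertex of $P$. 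I would argue this by following the explicit chain of geometric angles through the bounces and using the strict contraction $\|f'\|_\infty<1$ together with the standing normalization $f(0)=0$ to show that, at each step, the trajectory direction is strictly interior to the wedge of geometrically admissible directions and therefore cannot coincide with one of the finitely many vertex-pointing directions from the current base point. This last transversality-type step is the delicate part of the proof and is what genuinely uses the contracting nature of $f$ beyond the inductive hypothesis.
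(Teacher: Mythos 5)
Your base case is fine and is essentially the paper's ``easily checked'' case $n=1$: on $S$ one has $\cos\theta=h_{v,e}/L$ with $h_{v,e}$ bounded below over the finitely many admissible vertex--side pairs, so $\overline S$ stays a definite distance from $\partial M$. The inductive step, however, has a genuine gap, and the gap is exactly where you flag it. Your reduction is: if $x_k\in\Phi^{-1}(S^+_n)$ with $x_k\to x^*\in\partial M$, then (after passing to a subsequence so that $\Phi^{j+1}(x_k)\in S$ for a fixed $j$) the limit point $z_{j+1}=\lim_k\Phi^{j+1}(x_k)$ lies in $\overline S$, and you hope to contradict this by showing that the limiting tangential trajectory emanating from the corner never points exactly at another vertex. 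This cannot work in the stated generality: whether a particular direction from a particular boundary point hits a vertex is a codimension-one condition on the polygon, and the lemma is asserted for \emph{every} polygon (the only standing hypothesis in the paper, no parallel facing sides, is not even used here). The set $\overline S$ is a union of curves sweeping a full range of angles, and nothing about $f$ being a contraction with $f(0)=0$ prevents the specific point $z_{j+1}$ from landing on one of them for suitable tables. Moreover the reduction itself loses the information that actually makes the lemma true: a single limit point lying in $\overline S$ is not, by itself, contradictory, so even establishing your ``transversality'' claim where it happens to hold would prove a strictly stronger (and sometimes false) statement than what is needed.

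The paper's argument is two-dimensional rather than trajectory-by-trajectory, and this is what closes the case you cannot. Working in a sector $\Delta$ of order $n$ at a hypothetical point $x\in\overline{S^+_n\setminus S^+_1}\cap\partial M$, it compares the $\Phi$-images of two curves ending at $\partial M$: a piece $\gamma_2$ of the singular boundary curve of $\Delta$ (whose image lies in a smooth component of some $S^+_k$ and is therefore \emph{strictly decreasing}, by the monotonicity of the singular set) and a vertical segment $\gamma_1$ from an interior point of $\gamma_2$ down to $\partial M$ (whose image is \emph{strictly increasing}, because all entries of $D\Phi$ have the same sign). Since $\Phi$ collapses limits at $\partial M$ --- the tangential limits along $\gamma_1$ and $\gamma_2$ coincide --- the two image curves would meet at both endpoints, which is impossible for curves of opposite strict monotonicity. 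If you want to keep an induction-free, self-contained proof, this is the mechanism you need to reproduce; the pointwise limit analysis of iterated trajectories does not capture it.
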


\begin{proof}
It can be easily checked that the case $ n=1 $ holds true. Hence, in the rest of the proof, we can limit ourselves to consider the case when $ n \ge 2 $, and $ S^{+}_{n} $ is replaced by $ R_{n} := S^{+}_{n} \setminus S^{+}_{1} $. We argue by contradiction, and suppose that $ \overline{R_{n}} \cap \partial M \neq \emptyset $ for some $ n \in \Nn $. Let $ x \in \overline{R_{n}} \cap \partial M $. There are precisely two sectors of order $ n $ with vertex $ x $ that have a boundary curve lying on $ \partial M $. Denote the one on the right-hand side of $ x $ by $ \Delta $. One boundary curve of $ \Delta $ is contained in $ \partial M $, whereas the other is contained in $ R_{n} $.  

Now, choose a point $ y $ in the interior the boundary curve $ \gamma $ of $ \Delta $ that is not contained in $ \partial M $ in such a way that the vertical segment $ \gamma_{1} $ with one endpoint in $ y $ and the other one on $ \partial M $ is contained in the closure of $ \Delta $. Denote by $ \gamma_{2} $ be the sub-curve of $ \gamma $ with endpoints $ x $ and $ y $, and denote by $ z $ the endpoint of the segment $ \gamma_{1} $ belonging to $ \partial M $.

From the definition of $ S^{+}_{n} $, it follows that $ \gamma'_{2} :=\Phi(\gamma_{2}) $ is contained in a smooth component of $ S^{+}_{k} $ for some $ 1 \le k < n $. Since the smooth components of $ S^{+}_{n} $ are strictly decreasing curves by Remark~\ref{re:monotonicity}, 
so is $ \gamma'_{2} $. The map $ \Phi $ is $ C^{2} $ differentiable on $ \Delta $, and so $ \gamma'_{1} := \Phi(\gamma_{1}) $ is a $ C^{2} $ curve. Since $ \gamma_{1} $ is vertical, and the entries of $ D \Phi $ have the same sign (see~\eqref{eq:derivative}), it follows that $ \gamma'_{1} $ is strictly increasing. 

Since $ y \in \gamma_{1} \cap \gamma_{2} $, it follows that $ \Phi(y)  \in \gamma'_{1} \cap \gamma'_{2} $. Moreover, since $ \gamma'_{1} $ and $ \gamma'_{2} $ are strictly monotone, both limits $ \lim_{\gamma_{1} \ni w \to z} \Phi(w) $ and $ \lim_{\gamma_{2} \ni w \to x} \Phi(w) $ exist. It is easy to see that these limits coincide, because $ x,z \in \partial M = S^{1} \times \{-\pi/2,\pi/2\} $. In conclusion, the curves $ \gamma'_{1} $ and $ \gamma'_{2} $ intersect at both their endpoints. However, this is impossible, because one curve is strictly increasing and the other is strictly decreasing. 
\end{proof}

\begin{defn}
A sector $ \Delta $ of order $ n $ is called \emph{regular} if $ \Phi^{n}|_{\Delta} $ admits an extension $ \Psi_{\Delta} \colon \overline{\Delta} \to \Psi_{\Delta}(\overline{\Delta}) $ that is a $ C^{2} $ diffeomorphism, where $ \overline{\Delta} $ is the closure of $ \Delta $.
\end{defn}

\begin{lemma}
\label{le:increasing}
Suppose that $ \Delta $ is a regular sector of order $ n $. 
If $ \, \Gamma $ is a boundary curve of $ \Delta $ that is not contained in $ \partial M $, then $ \Psi_{\Delta}(\Gamma) $ is a $ C^{2} $ increasing curve.
\end{lemma}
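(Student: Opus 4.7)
My approach is to factor $\Psi_\Delta|_\Gamma$ through a vertical intermediate curve and then use the sign structure of $D\Phi$ to show that increasing curves are mapped to increasing curves.

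From the observation following Definition~\ref{de:sector}, either (a) $\Gamma \subset V$, in which case $\Gamma$ itself is a vertical segment in $\{s_i\}\times(-\pi/2,\pi/2)$; or (b) $\Phi^{k-1}(\Gamma)\subset S$ for some $1\le k\le n$. In case (b), I would argue that the one-sided $C^2$ extension $\widehat{\Phi^k}$ of $\Phi^k|_\Delta$ sends $\Gamma$ into a vertical segment of $V$: trajectories starting at points of $\Delta$ close to $\Gamma$ approach the same vertex $s_i$ of $\partial P$ from the same side at their $k$-th collision, so the limiting curve has constant $s$-coordinate equal to $s_i$. Setting $k=0$ in case (a), in both cases we obtain a factorization $\Psi_\Delta|_\Gamma = \Phi^{n-k}\circ\tilde\gamma$ with $\tilde\gamma:=\widehat{\Phi^k}(\Gamma)$ vertical.

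Next I would prove the following sign-preservation principle: if $\gamma(t)=(s(t),\theta(t))$ is a $C^1$ increasing curve ($s'\theta'\ge 0$) along which $\Phi$ is $C^1$, then $\Phi\circ\gamma$ is increasing. By~\eqref{eq:derivative}, the tangent vector of $\Phi\circ\gamma$ is $(-as'-b\theta',\,-c\theta')$ with $a=\cos\theta/\cos\bar\theta_1>0$, $b=t(s,\theta)/\cos\bar\theta_1>0$ and $c=f'(\bar\theta_1)>0$ (positivity uses the standing assumptions on $f$ and $t>0$). The product of its two components is
\[
ac\,s'\theta' + bc\,(\theta')^2 \ge 0,
\]
so $\Phi\circ\gamma$ is increasing. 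Since any vertical curve is in particular increasing (with $s'\equiv 0$), iterating this principle $n-k$ times yields that $\Psi_\Delta(\Gamma)=\Phi^{n-k}\circ\tilde\gamma$ is increasing. The $C^2$ smoothness is automatic from $\Psi_\Delta$ being a $C^2$ diffeomorphism.

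\textbf{Main obstacle.} The delicate step is justifying the extension in case (b): one must verify that as $x\in\Delta$ tends to $\Gamma$, the iterate $\Phi^k(x)$ has $s$-coordinate converging to a fixed vertex $s_i$, so the limiting curve is vertical, and that the trajectory continues in a well-defined manner determined by the side of $\Delta$ from which one approaches. Lemma~\ref{le:boundary} and the strict monotonicity of $V\cup S_n^+$ from Remark~\ref{re:monotonicity} will be useful here to ensure the geometric configuration is well behaved. The sign computation itself, although the algebraic heart of the argument, is short once one notes that the standing assumption $f'>0$ forces all nonzero entries of $D\Phi$ to share the same sign.
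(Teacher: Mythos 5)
Your proof is correct and follows essentially the same route as the paper's: identify the curve at the moment it interacts with the vertex as an increasing (in your case vertical, in the paper's case a curve of $N^{-}=F(V\cup S^{+}_{1})$) curve, and then propagate increasingness forward using the fact that all entries of $D\Phi$ share the same sign because $f'>0$. The only difference is bookkeeping — the paper pushes forward the set $F(V\cup S^{+}_{1})$ of strictly increasing curves, while you factor through the vertical segment of $V$ at the vertex itself — and your sign computation $ac\,s'\theta'+bc\,(\theta')^{2}\ge 0$ is exactly the paper's observation made explicit.
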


\begin{proof}
Define $ F(s,\theta) = (s,f(-\theta)) $ for $ (s,\theta) \in M $, and let $ N^{-} = F(V \cup S^{+}_{1}) $. Remark~\ref{re:monotonicity} combined with $ f'>0 $ implies that $ N^{-} $ consists of strictly increasing $ C^{2} $ curves. 

Let $ \Gamma $ be as in the statement of the lemma. By the remark after Definition~\ref{de:sector}, we have either $ \Gamma \subset V $ or $ \Phi^{i}(\Gamma) \subset S^{+}_{1} $ for some $ 0 \le i \le n-1 $. Then, it is not difficult to see that there exists $ 0 \le k \le n-1 $ such that $ \Psi_{\Delta}(\Gamma) \subset \Phi^{k}(N^{-}) $. Since $ N^{-} $ consists of strictly increasing $ C^{2} $ curves, to obtain the wanted conclusion is enough to observe that if $ \gamma $ is a strictly increasing curve such that $ \gamma \cap N^{+}_{i} = \emptyset $, then $ \Phi^{i}(\gamma) $ is strictly increasing curve as well. This is so, because all the entries of the matrix of $ D \Phi^{i} $ have the same sign (see~\eqref{eq:derivative}). 
\end{proof}

\begin{defn}
Let $ \Delta $ be a sector with vertex $ x $. A curve $ \gamma \colon [a,b] \to M $ is called \emph{$ \Delta $--curve} if $ \gamma(a) = x $ and $ \gamma(t) \in \Delta $ for $ t \in (a,b] $. 
\end{defn}

In the next lemma, we give a necessary condition for a regular sector $ \Delta $ to contain singular $ \Delta $--curves. By definition, if $ \Delta $ is a sector of order $ n $, then $ \Delta \cap S^{+}_{n} = \emptyset $. Hence, if there exists a $ \Delta $--curve $ \gamma $ contained in $ S^{+}_{n+k} $ for some $ k>0 $, then we must have $ \gamma \subset S^{+}_{n+k} \setminus S^{+}_{n} $.

\begin{lemma}
\label{le:necessary}
Suppose that $ \Delta $ is a regular sector of order $ n $. If there exists a $ \Delta $--curve contained in $ S^{+}_{n+k} $ for some $ k>0 $, then there exists also a horizontal $ \Delta $--curve.
\end{lemma}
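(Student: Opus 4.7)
The plan is to push the $\Delta$--curve $\gamma$ and the boundary curves of $\Delta$ through the $C^{2}$ extension $\Psi_\Delta$, read off the tangent directions at $\Psi_\Delta(x)$, and run a short cone argument on the direction circle $S^{1}$.

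First, I would observe that $\Delta$ being a sector of order $n$ gives $\Delta \cap S^{+}_{n}=\emptyset$, so $\gamma(t)\in S^{+}_{n+k}\setminus S^{+}_{n}$ for $t\in(a,b]$, and \eqref{eq:image} yields $\Phi^{n}(\gamma(t))\in S^{+}_{k}$. Since $S^{+}_{k}$ is a finite union of strictly decreasing $C^{2}$ curves by Remark~\ref{re:monotonicity}, the image $\Psi_\Delta(\gamma)$ is locally contained in a strictly decreasing curve through $\Psi_\Delta(x)$; in particular, its tangent half-direction at $\Psi_\Delta(x)$ satisfies $s'\theta'<0$.

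Next, since $x=\gamma(a)\in S^{+}_{n+k}$, Lemma~\ref{le:boundary} rules out $x\in\partial M$. Hence in Definition~\ref{de:sector} one can take $\overline{U}\cap\partial M=\emptyset$, so the boundary curves $\Gamma_{1},\Gamma_{2}$ of $\Delta$ at $x$ are smooth components of $V\cup S^{+}_{n}$ and in particular are not contained in $\partial M$. Lemma~\ref{le:increasing} then says $\Psi_\Delta(\Gamma_{i})$ is a $C^{2}$ strictly increasing curve for $i=1,2$, so the two bounding tangent half-directions of $\Psi_\Delta(\Delta)$ at $\Psi_\Delta(x)$ satisfy $s'\theta'>0$.

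The key observation is then purely about $S^{1}$. Parametrize directions by $\phi\in[0,2\pi)$; the strictly increasing set is $(0,\pi/2)\cup(\pi,3\pi/2)$, the strictly decreasing set is $(\pi/2,\pi)\cup(3\pi/2,2\pi)$, and the two connected components of the increasing set are separated by the verticals $\{\pi/2,3\pi/2\}$ and the horizontals $\{0,\pi\}$. I claim that any open arc $T\subset S^{1}$ whose two endpoints lie in the strictly increasing set and which meets the strictly decreasing set must also contain one of $\{0,\pi\}$ in its interior. This is a short split into two cases (endpoints in the same vs.\ different connected components of the increasing set): in either case, traversing from an endpoint across the strictly decreasing set to the other endpoint (or back to the same component the long way around) is forced to cross $\{0,\pi\}$, since skirting the decreasing zone only via verticals lands one back in the increasing set without reaching the other endpoint. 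Applying this to the tangent cone of $\Psi_\Delta(\Delta)$ at $\Psi_\Delta(x)$, which is bounded by strictly increasing directions and contains the strictly decreasing tangent of $\Psi_\Delta(\gamma)$, yields a horizontal direction in the interior of the cone, and thus a horizontal $\Psi_\Delta(\Delta)$--curve $\eta$ starting at $\Psi_\Delta(x)$.

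Finally, \eqref{eq:derivative} shows that $D\Phi$ is upper triangular, hence so are $D\Phi^{n}$ and $D\Phi^{-n}$; both preserve the horizontal subspace $\{\dot\theta=0\}$. Pulling back $\eta$ through $\Psi_\Delta^{-1}$ therefore produces a horizontal curve in $\overline{\Delta}$ starting at $x$ and entering $\Delta$, which is the desired horizontal $\Delta$--curve. The step I expect to be the main obstacle is the cone analysis: one has to keep track of which of the possibly four sectors at $\Psi_\Delta(x)$ is actually $\Psi_\Delta(\Delta)$, and in particular handle cleanly the degenerate case where the two tangent lines of $\Psi_\Delta(\Gamma_{1})$ and $\Psi_\Delta(\Gamma_{2})$ coincide at $\Psi_\Delta(x)$; the strict (rather than merely weak) increase coming from Lemma~\ref{le:increasing} is what makes the crossings through $\{0,\pi\}$ genuinely interior rather than boundary, which is essential for obtaining a horizontal curve inside $\Psi_\Delta(\Delta)$.
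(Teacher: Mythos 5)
Your proposal is correct and follows essentially the same route as the paper's proof: push everything forward by $\Psi_\Delta$, observe via \eqref{eq:image} and Remark~\ref{re:monotonicity} that $\Psi_\Delta(\gamma)$ is strictly decreasing while the images of the boundary curves are increasing by Lemma~\ref{le:increasing}, deduce a horizontal curve through $\Psi_\Delta(x)$ inside $\Psi_\Delta(\Delta)$, and pull it back using the fact that $D\Phi^{\pm1}$ preserves the horizontal direction. Your explicit cone argument on the circle of directions simply fills in the step the paper dismisses as ``easy to see.''
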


\begin{proof}
First of all, we observe that if $ x $ is the vertex of $ \Delta $, then $ x \notin \partial M $ by Lemma~\ref{le:boundary}. 
Now, let $ \gamma $ be the $ \Delta $--curve contained in $ S^{+}_{n+k} $. Then $ \gamma \subset S^{+}_{n+k} \setminus S^{+}_{n} $, and so~\eqref{eq:image} implies that $ \Psi_{\Delta}(\gamma) \subset S^{+}_{k} $. Hence $ \Phi^{n}_{\Delta}(\gamma) $ 
is a strictly decreasing curve passing through $ \Psi_{\Delta}(x) $. Next, the region $ \Psi_{\Delta}(\Delta) $ is bounded by the curves $ \Gamma_{1} $ and $ \Gamma_{2} $, which are the images under $ \Psi_{\Delta} $ of the boundary curves of $ \Delta $, passing through $ \Psi_{\Delta}(x) $, and are increasing by Lemma~\ref{le:increasing}.  

Summarizing, $ \Psi_{\Delta}(\Delta) $ is bounded by the increasing curves $ \Gamma_{1} $ and $ \Gamma_{2} $, and contains a strictly decreasing curve $ \Psi_{\Delta}(\gamma) $. Moreover, all these curves pass through $ \Psi_{\Delta}(x) $. It is now easy to see that $ \Psi_{\Delta}(\Delta) $ must contain a horizontal curve $ \chi $ passing through $ \Psi_{\Delta}(x) $. Since horizontal curves are mapped by $ \Psi^{-1}_{\Delta} $ into horizontal curves, we conclude that $ \Psi^{-1}_{\Delta}(\chi) $ is a horizontal $ \Delta $--curve.
\end{proof}

Since sectors are bounded by strictly decreasing curves, among the sectors of a given order with a given vertex, there are only two of them whose closure contains horizontal curves passing through the vertex. 

\begin{defn}
	Let $ b_{n} $ be the maximum number of distinct regular sectors of order $ n $ with the same vertex. We call $ b_{n} $ the \emph{branching number of order $ n $}.
\end{defn}

\begin{theorem}
\label{th:linear}
We have $ b_{n} \le (2n-1)b_{1} $ for $ n \ge 1 $.
\end{theorem}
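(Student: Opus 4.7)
The plan is to proceed by induction on $n$. The base case $n=1$ is tautological. For the inductive step, I would assume $b_n \le (2n-1)b_1$, fix a vertex $x \in N^+_{n+1}$, and bound $b_{n+1}(x)$, the number of regular sectors of order $n+1$ with vertex $x$, splitting into two cases according to whether $x$ already lies in $N^+_n$.

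If $x \notin N^+_n$, a sufficiently small ball $U$ around $x$ misses $N^+_n$ entirely, so $\Phi^n|_U$ is a $C^2$ diffeomorphism onto a neighborhood of $y := \Phi^n(x)$. Since $N^+_{n+1} \cap U \subset \Phi^{-n}(S^+_1)$ by~\eqref{eq:image}, the smooth components of $N^+_{n+1}$ meeting at $x$ correspond under $\Phi^n$ to those of $S^+_1$ meeting at $y$. The assignment $\Delta' \mapsto \Phi^n(\Delta')$ gives a bijection between regular sectors of order $n+1$ at $x$ and regular sectors of order $1$ at $y$, yielding $b_{n+1}(x) \le b_1$.

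If $x \in N^+_n$, every regular sector of order $n+1$ with vertex $x$ is contained in a regular sector $\Delta$ of order $n$ with vertex $x$. The key mechanism is Lemma~\ref{le:necessary} with $k=1$: if $\Delta$ admits any $\Delta$-curve in $S^+_{n+1}$, then $\Delta$ must contain a horizontal $\Delta$-curve. By the observation preceding the theorem, at most two of the regular sectors of order $n$ at $x$ can contain such horizontal curves. Every remaining (non-horizontal) sector $\Delta$ therefore admits no $\Delta$-curve in $S^+_{n+1}$ and contributes exactly one sub-sector of order $n+1$, namely itself. For each of the at most two horizontal sectors $\Delta$, the $C^2$ diffeomorphism $\Psi_\Delta$ carries new $\Delta$-curves in $S^+_{n+1} \setminus S^+_n$ onto $S^+_1$-curves through $y := \Psi_\Delta(x)$ (via~\eqref{eq:image}), so the regular sub-sectors of $\Delta$ of order $n+1$ at $x$ are in bijection with regular sectors of order $1$ at $y$ lying inside $\Psi_\Delta(\overline{\Delta})$; there are at most $b_1$ of them. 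Letting $h \le 2$ count the horizontal sectors at $x$, I obtain
\[
b_{n+1}(x) \le (b_n(x)-h) + h\, b_1 \le b_n(x) + 2(b_1 - 1) \le (2n-1)b_1 + 2(b_1-1) = (2n+1)b_1 - 2,
\]
which together with Case~A gives $b_{n+1} \le (2(n+1)-1)b_1$ and closes the induction.

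I expect the main subtlety to lie in the horizontal sub-case of the second case: one must verify carefully that $\Psi_\Delta$ produces a clean bijection between the regular sub-sectors of $\Delta$ of order $n+1$ at $x$ and the regular sectors of order $1$ at $y$ lying in $\Psi_\Delta(\overline{\Delta})$, with regularity preserved under pullback. Here Lemma~\ref{le:increasing}, combined with the strict monotonicity of the components of $S^+_1$ recorded in Remark~\ref{re:monotonicity}, plays the expected role: the boundary curves of $\Psi_\Delta(\Delta)$ are increasing while the new $S^+_1$-curves through $y$ are strictly decreasing, preventing degeneracies and ensuring that distinct $S^+_1$-curves through $y$ inside $\Psi_\Delta(\overline{\Delta})$ pull back to genuinely distinct regular sub-sectors.
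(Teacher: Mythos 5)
Your proposal is correct and follows essentially the same route as the paper: bounding the increment from order $n$ to order $n+1$ by combining Lemma~\ref{le:necessary} (new singular curves force horizontal $\Delta$--curves), the observation that at most two sectors at a given vertex contain horizontal curves, and~\eqref{eq:image} to cap the number of new curves per sector by $b_{1}$. Your explicit treatment of vertices in $N^{+}_{n+1}\setminus N^{+}_{n}$ and the slightly tighter per-sector count ($h\,b_{1}$ instead of $h(b_{1}+1)$) are minor refinements of the paper's argument, not a different method.
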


\begin{proof}
To prove the corollary, we show that $ b_{n+1} - b_{n} \le 2b_{1} $ for $ n \ge 1 $, which implies the wanted conclusion. This is achieved by estimating the maximum number of regular sectors of order $ n $ containing components of $ S^{+}_{n+1} $ passing through their vertices, and the maximum number of such components. 

Consider a regular sector $ \Delta $ of order $ n $ with vertex $ x $. Let $ a $ be the maximum number of $ \Delta $--curves contained in $ S^{+}_{n+1} $. Since $ \Delta $ is of order $ n $, these curves are indeed contained in $ S^{+}_{n+1} \setminus S^{+}_{n} $. By~\eqref{eq:image}, their image under $ \Psi_{\Delta} $ consists of an equal number of curves contained in $ S^{+}_{1} $ and meeting at $ \Psi_{\Delta}(x) $. Hence $ a \le b_{1} $. If $ a > 0 $, then Lemma~\ref{le:necessary} implies that there exist horizontal $ \Delta $--curves. Since there are only two sectors of order $ n $ with vertex $ x $ having this property, we conclude that $ b_{n+1} - b_{n} \le 2b_{1} $.
\end{proof}

\begin{defn}
A non-regular sector is called \emph{primary} if it is not a sub-sector of a non-regular sector.
\end{defn}

\begin{remark}
\label{re:non-regular}	
It is not difficult to see that if $ \Delta $ is a non-regular sector of order $ n $ with vertex $ x $, then either $ x \in \partial M $, or there exists $ 0 \le k < n $ such that $ \Phi^{k} $ is a $ C^{2} $ diffeomorphism on the closure of $ \Delta $, and $ \Phi^{k}(x) $ is a tangential singularity. Moreover, every sub-sector of $ \Delta $ is non-regular as well. If $ \Delta $ is primary, then $ k = n-1 $. 
\end{remark}

\section{Existence of SRB measures}
\label{sec:SRB}

In this section, we prove our main result. It relies on results of Pesin and Sataev on the existence and properties of SRB measures for general hyperbolic piecewise smooth maps. For the convenience of the reader, we first state the results with the necessary definitions.

\subsection{Hyperbolic piecewise smooth maps}
\label{se:hyperbolic}

Let $ \mathcal{M} $ be a 
smooth manifold 
with Riemannian metric $ \rho $. The Lebesgue measure of $ \mathcal{M} $ generated by $ \rho $ is denoted by $ \nu $. Let $ \mathcal{U} $ be a connected open subset of $ \mathcal{M} $ with compact closure. Finally, let $\mathcal{N}$ be a subset of $\mathcal{U}$ closed in the relative topology of $ \mathcal{U} $.

\begin{defn}\label{def:piecewise map}
A map $\mathcal{F} \colon \mathcal{U} \setminus \mathcal{N} \to \mathcal{M}$ is called a \emph{piecewise smooth map} if $\mathcal{F}$ is a $C^2$ diffeomorphism from $\mathcal{U}\setminus \mathcal{N}$ onto its image $\mathcal{F}(\mathcal{U}\setminus \mathcal{N})$. The set $\mathcal{N}^+:=\mathcal{N}\cup\partial \mathcal{M}$ is called the \emph{singular set} of $ \mathcal{F} $.
\end{defn}

Let $\mathcal{N}^-=\partial (\mathcal{F}(\mathcal{M}\setminus \mathcal{N}^+))$.

\begin{defn}
Let $ \mathcal{U}^{+} = \{x \in \mathcal{U} \colon \mathcal{F}^{n}(x) \not \in \mathcal{N}^{+} \,\, \forall n \ge 0\} $ be the set of all elements of $ M $ with infinite positive semi-orbit. Define
\[
\mathcal{D}={\bigcap_{n \ge 0} \mathcal{F}^{n}(\mathcal{U}^{+})}.
\]
The set $\mathcal{A}=\overline{\mathcal{D}}$ is called the \emph{generalized attractor} of $ \mathcal{F} $.
\end{defn}

\noindent
{\bf Condition A1:} There exist positive constants $ \widetilde A $ and $ a $ such that for every $ x \in \mathcal{U}\setminus \mathcal{N}^{+} $, 
\begin{equation*}
\label{cdn H2}
\|D^2 \mathcal{F}(x)\| \le \widetilde A \rho(x,\mathcal{N}^+)^{-a} \quad \text{and} \quad \|D^2 \mathcal{F}^{-1}(\mathcal{F}(x))\| \le \widetilde A \rho(x,\mathcal{N}^-)^{-a}.
\end{equation*}

A cone in $ T_{x} \mathcal{M} $, $ x \in \mathcal{U} $ with an axial linear subspace $ P \subset T_{x} \mathcal{M} $ and angle $ \alpha>0 $ is the set given by
\[
C_\alpha(x,P) = \{v \in T_{x} \mathcal{M} \colon \angle (v,P) \le \alpha \}.
\] 

\medskip
\noindent
{\bf Condition A2:} The map $ \mathcal{F} $ is uniformly hyperbolic. Namely, there exist two constants $ c > 0 $, $ \lambda> 1 $ and cones 
$$
C^{s}(x)=C_{\alpha^s(x)}(x,P^s(x))\quad\text{and}\quad C^{u}(x)=C_{\alpha^u(x)}(x,P^u(x))\,,
$$
with axial subspaces $ P^{u}(x), P^{s}(x) $ and positive angles $ \alpha^{u}(x), \alpha^{s}(x) $ for $ x \in \mathcal{U} \setminus \mathcal{N}^{+} $ such that 
\begin{enumerate}
\item $ T_{x} \, \mathcal{U} = P^{u}(x) \oplus P^{s}(x) $, 
\item $ \dim P^{u}(x) $ and $ \dim P^{s}(x) $ are constant,
\item the angle between $ C^{u}(x) $ and $ C^{s}(x) $ is uniformly bounded away from zero, 
\item $ D\mathcal{F}(x)(C^{u}(x)) \subset C^{u}(\mathcal{F}(x)) $ and $ D\mathcal{F}^{-1}(\mathcal{F}(x))(C^{s}(\mathcal{F}(x))) \subset C^{s}(x) $ for $ x \in \mathcal{U} \setminus \mathcal{N}^{+} $,
\item if $ x \in \mathcal{U}^{+} $ and $ n \in \Nn $, then $ \|D\mathcal{F}^{n}(x)v\| \ge c \lambda^{n} \|v\| $ for $ v \in C^{u}(x) $, and $ \|D\mathcal{F}^{-n}(\mathcal{F}^{n}(x))v\| \ge c \lambda^{n} \|v\| $ for $ v \in C^{s}(x) $. 
\end{enumerate}
 
\begin{defn}
We say that a piecewise smooth map $ \mathcal{F} $ is \emph{hyperbolic} or that $ \mathcal{A} $ is a \emph{hyperbolic attractor} if $ \mathcal{F} $ satisfies Conditions~A1 and~A2. 
\end{defn}

Following \cite{pe}, for every $ \epsilon>0 $ and every $ l \in\Nn $, we define
\begin{align*}
\hat{\mathcal{D}}^{+}_{\epsilon,l} &= \left\{x \in \mathcal{U}^+ \colon \rho(\mathcal{F}^{ n}(x),\mathcal{N}^+) \ge l^{-1} e^{-n \epsilon} \,\,\, \forall n \ge 0 \right\},\\
\mathcal{D}^{-}_{\epsilon,l} &= \left\{x \in \mathcal{A} \colon \rho(\mathcal{F}^{- n}(x),\mathcal{N}^-) \ge l^{-1} e^{-n \epsilon} \,\,\, \forall n \ge 0 \right\},\\
\mathcal{D}^{+}_{\epsilon,l} &= \hat{\mathcal{D}}^{+}_{\epsilon,l}\cap \mathcal{A},
\end{align*}
and
\[
\mathcal{D}^{\pm}_{\epsilon} = \bigcup_{l \geq1} \mathcal{D}^{\pm}_{\epsilon,l}, \qquad \mathcal{D}^{0}_{\epsilon} = \mathcal{D}^{-}_{\epsilon} \cap \mathcal{D}^{+}_{\epsilon}.
\]

Roughly speaking, the sets $\mathcal{D}^{+}_{\epsilon,l}$ (resp. $ \mathcal{D}^{-}_{\epsilon,l} $) consists of points in $\mathcal{D}$ whose forward (resp. backward) orbit does not get too close to the singular set. These sets are compact.

The local stable manifold $W^s_{loc}(x)$ through a point $x\in \mathcal{U}^+$ is the set of all $y\in \mathcal{U}^+$ such that $\mathcal{F}^{n}(x)$ and $\mathcal{F}^{n}(y)$ belong to the same connected component of $ \mathcal{M} \setminus \mathcal{N}^{+} $ for every $n\geq 0$, and $ \rho(\mathcal{F}^{n}(x), \mathcal{F}^{n}(y)) \to 0 $ as $ n \to +\infty $.
The local unstable manifold $W^u_{loc}(x)$ at $ x \in \mathcal{D} $ 
is defined similarly by replacing $ \mathcal{F} $ with $ \mathcal{F}^{-1} $.

Local stable and unstable manifolds exist at each point of $\mathcal{D}_\epsilon^0$ for $ \epsilon $ sufficiently small.

\begin{proposition}\label{prop:stable manifolds}
If $ \mathcal{F} $ is a hyperbolic piecewise smooth map, then there exists $\epsilon_0>0$ such that  for all $0<\epsilon<\epsilon_0$,
$ W^{s}_{loc}(x) $ (resp. $ W^{u}_{loc}(x) $) 
is a $C^1$ embedded submanifold of uniform size $ \delta_{l}>0 $ for every $l\in\Nn$ and for every $ x \in \hat{\mathcal{D}}^{+}_{\epsilon,l} $ (resp. $ x \in \mathcal{D}^{-}_{\epsilon,l} $). 
\end{proposition}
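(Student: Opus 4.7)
The plan is to reduce the statement to the general Pesin--Katok--Strelcyn stable manifold theorem for piecewise smooth hyperbolic maps in the form proved in \cite{pe}, by verifying that Conditions~A1 and~A2 supply exactly the hypotheses required there and that the exponentially shrinking neighbourhoods of the singular set defining $\hat{\mathcal{D}}^{+}_{\epsilon,l}$ and $\mathcal{D}^{-}_{\epsilon,l}$ are the ones prescribed by that theorem.

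First, I would recall the Hadamard--Perron graph transform construction. At each $x\in \hat{\mathcal{D}}^{+}_{\epsilon,l}$ one looks for $W^{s}_{loc}(x)$ as the graph, over a ball of radius $\delta_{l}$ in the stable subspace $P^{s}(x)$, of a $C^{1}$ function $\varphi_{x}\colon P^{s}(x)\to P^{u}(x)$ with uniformly bounded Lipschitz constant. The transform is obtained by pulling back graphs along $\mathcal{F}$; Condition~A2(3)--(5) guarantees that in the invariant splitting produced by the cones $C^{s}(x),C^{u}(x)$, the induced action on graphs is a contraction with rate governed by $\lambda^{-1}$. In the purely smooth case this already gives $W^{s}_{loc}(x)$, and uniform $C^{1}$ dependence on $x$ is standard.

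The hard part, specific to the piecewise smooth setting, is to control two enemies simultaneously: (i) the stable disc at $\mathcal{F}^{n}(x)$ must remain inside a single smooth component of $\mathcal{M}\setminus \mathcal{N}^{+}$ so that $\mathcal{F}$ can be iterated, and (ii) the $C^{2}$ norm of $\mathcal{F}$ may blow up near $\mathcal{N}^{+}$ as in Condition~A1. Here one uses the definition of $\hat{\mathcal{D}}^{+}_{\epsilon,l}$: the orbit of $x$ approaches $\mathcal{N}^{+}$ at most at rate $l^{-1}e^{-n\epsilon}$, so Condition~A1 gives
\[
\|D^{2}\mathcal{F}(\mathcal{F}^{n}(x))\|\le \widetilde{A}\, l^{a} e^{n a \epsilon}.
\]
Choosing $\epsilon_{0}$ so small that $a\epsilon_{0}<\log\lambda$, the hyperbolic contraction defeats this polynomial-in-iterate distortion; an admissible disc of radius $\delta_{l}\sim l^{-C}$ then satisfies $\mathcal{F}^{n}(W^{s}_{loc}(x))\subset B(\mathcal{F}^{n}(x),\tfrac12 l^{-1}e^{-n\epsilon})$ for every $n\ge 0$, which rules out crossings with $\mathcal{N}^{+}$. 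With this bound the graph transform can be iterated indefinitely on a single disc of radius $\delta_{l}$, the standard fixed-point argument gives a $C^{1}$ embedded manifold tangent to $P^{s}(x)$, and the size $\delta_{l}$ depends only on $l$ because the estimates above depend on $x$ only through $l$.

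The same argument applied to $\mathcal{F}^{-1}$, using $\mathcal{D}^{-}_{\epsilon,l}$ and $\mathcal{N}^{-}$ in place of $\hat{\mathcal{D}}^{+}_{\epsilon,l}$ and $\mathcal{N}^{+}$, together with the symmetric parts of Conditions~A1 and~A2, produces $W^{u}_{loc}(x)$ of uniform size $\delta_{l}$ on each $\mathcal{D}^{-}_{\epsilon,l}$. Since every step is exactly the content of the stable manifold theorem in \cite{pe} (see also Sataev \cite{Sataev92} for the formulation adapted to generalized attractors), the proof amounts to observing that our Conditions~A1--A2 coincide with the hypotheses there and quoting the result; the single quantitative choice to be made is $\epsilon_{0}\in(0,a^{-1}\log\lambda)$, which is where the main tension between hyperbolicity and near-singularity distortion is resolved.
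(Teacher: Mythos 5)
Your proposal is correct and follows essentially the same route as the paper, which simply invokes Pesin's stable manifold theorem \cite[Proposition 4]{pe} (your sketch of the graph transform with the choice $a\epsilon_0<\log\lambda$ is precisely the content of that result, whose hypotheses are Conditions A1--A2). The only point the paper makes explicitly that you leave implicit is the passage from $\mathcal{D}^{+}_{\epsilon,l}$ (points of the attractor) to $\hat{\mathcal{D}}^{+}_{\epsilon,l}$, which is handled by observing that $W^{s}_{loc}(x)$ depends only on the forward orbit of $x$ --- an observation your pointwise construction already incorporates.
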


\begin{proof}
This is \cite[Proposition 4]{pe} for points in ${\mathcal{D}}^{+}_{\epsilon,l} $ (resp. $ \mathcal{D}^{-}_{\epsilon,l} $). 
In order to obtain the same result for points in $\hat{\mathcal{D}}^{+}_{\epsilon,l} $ it is enough to observe that the local stable manifold $ W^{s}_{loc}(x) $ depends only on the forward orbit of $x$.
\end{proof}

\begin{defn}
\label{de:srb}
Let $ \mathcal{F} $ be a hyperbolic piecewise smooth map. An invariant Borel probability measure $ \mu $ on the attractor $ \mathcal{A} $ is called\footnote{These measures are called \emph{Gibbs u-measures} in~\cite{pe}.} 
SRB 
if $ \mu(\mathcal{D}^{0}_{\epsilon}) = 1 $ with $\epsilon>0$ as in Proposition~\ref{prop:stable manifolds}, and the conditional measures of $ \mu $ on the local unstable manifolds 
are absolutely continuous.
\end{defn}

\begin{defn}
Given an invariant Borel probability $\mu$ on $ \mathcal{M} $, 
its \emph{basin} $B(\mu)$ is the set of points $x\in \mathcal{U}^+$ such that 
$$ 
\lim_{n\to +\infty}\frac{1}{n}\,\sum_{j=0}^{n-1}
\varphi(\mathcal{F}^j(x)) = \int \varphi\,d\mu
$$
for any continuous function $\varphi \colon \mathcal{M}\to\Rr$. We say that $\mu$ is a {\em physical measure} if $B(\mu)$ has positive $\nu$-measure. 
\end{defn}

\begin{proposition}[{\cite[Theorem 3]{pe}}]
\label{thm  physical measures}
Every ergodic SRB measure is a physical measure.
\end{proposition}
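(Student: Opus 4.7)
The plan is to adapt to the piecewise smooth setting the classical argument showing that ergodic hyperbolic measures with absolutely continuous conditionals on unstable manifolds charge a positive Lebesgue measure basin. The three ingredients I would lean on are Birkhoff's theorem applied to $\mu$, the defining absolute continuity of the unstable conditionals, and the absolute continuity of the stable holonomy on Pesin blocks, which is exactly what Proposition~\ref{prop:stable manifolds} together with the standard Pesin-Sataev theory provides.

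First, since $\mu$ is ergodic, Birkhoff's ergodic theorem applied to a countable dense family $\{\varphi_k\}\subset C(\mathcal{M},\Rr)$ yields a Borel set $G\subset\mathcal{M}$ with $\mu(G)=1$ such that every $x\in G$ satisfies $\frac{1}{n}\sum_{j=0}^{n-1}\varphi(\mathcal{F}^j(x))\to\int \varphi\,d\mu$ for all $\varphi\in C(\mathcal{M},\Rr)$; hence $G\subset B(\mu)$. Because $\mu$ is SRB we have $\mu(\mathcal{D}^0_\epsilon)=1$ for the $\epsilon>0$ of Proposition~\ref{prop:stable manifolds}, so by a countable exhaustion we may fix $l\in\Nn$ with $\mu(\mathcal{D}^0_{\epsilon,l})>0$ and restrict attention to the compact Pesin block $\Lambda:=\mathcal{D}^0_{\epsilon,l}\cap G$, which still has positive $\mu$-measure.

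Next I would disintegrate $\mu$ along a measurable partition subordinate to the unstable foliation $\{W^u_{loc}(x)\}_{x\in \mathcal{D}^-_{\epsilon,l}}$, whose leaves have uniform size $\delta_l$ by Proposition~\ref{prop:stable manifolds}. By the SRB property (Definition~\ref{de:srb}), the conditional measures $\mu_x^u$ are absolutely continuous with respect to the leaf-volume $\nu^u_x$ induced by $\rho$ on $W^u_{loc}(x)$. Since $\mu(\Lambda)>0$ and $\mu(G^c)=0$, for $\mu$-a.e. $x\in\Lambda$ we have $\mu^u_x(G)>0$ and therefore $\nu^u_x(G\cap W^u_{loc}(x))>0$; in particular there is a positive-$\nu^u_x$ set of points $y\in W^u_{loc}(x)\cap\Lambda$ that already lie in the basin $B(\mu)$.

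Finally I would saturate by local stable manifolds. For any such $y\in W^u_{loc}(x)\cap\Lambda$ and any $z\in W^s_{loc}(y)$, the defining property of $W^s_{loc}$ forces $\rho(\mathcal{F}^n(z),\mathcal{F}^n(y))\to 0$, so the Birkhoff averages of any continuous $\varphi$ at $z$ and $y$ coincide; hence $W^s_{loc}(y)\subset B(\mu)$. Letting $\Lambda'\subset\Lambda$ denote a Pesin block of positive $\mu$-measure on which the local stable manifolds vary in a measurable family of uniform size, the union $\bigcup_{y\in \Lambda'}W^s_{loc}(y)$ is contained in $B(\mu)$, and the absolute continuity of the stable holonomy between two unstable transversals intersecting $\Lambda'$ (proved in this piecewise smooth context by Pesin and Sataev) transfers the positivity of $\nu^u_x$ on the unstable direction into positivity of the ambient Lebesgue measure $\nu$ on the saturate. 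Thus $\nu(B(\mu))>0$, which is exactly the physical measure property.

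The main technical obstacle here is not Birkhoff or the unstable disintegration, both of which are immediate, but the absolute continuity of the stable holonomy in the presence of the singular set $\mathcal{N}^+$: one needs the stable manifolds through $\Lambda'$ to have uniform size and to intersect a family of unstable transversals in a measurably absolutely continuous fashion, which is why the reduction to the Pesin block $\mathcal{D}^0_{\epsilon,l}$ and the uniform-size statement of Proposition~\ref{prop:stable manifolds} are essential. This is exactly the step encapsulated in \cite[Theorem 3]{pe}.
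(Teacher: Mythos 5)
Your argument is correct and is essentially the classical Pesin-type proof: Birkhoff genericity on a full-measure set, absolute continuity of the unstable conditionals to get a positive leaf-volume set of generic points on some $W^u_{loc}(x)$, stable saturation, and absolute continuity of the stable holonomy to convert leaf-volume positivity into positivity of $\nu(B(\mu))$. The paper itself gives no proof here --- it simply cites \cite[Theorem 3]{pe} --- and your reconstruction matches the argument behind that citation, with the only step deserving care being exactly the one you flag, namely the Fubini/holonomy step on a Pesin block, which in this piecewise smooth setting is supplied by Proposition~\ref{prop:stable manifolds} and the Pesin--Sataev absolute continuity results (cf.\ Lemma~\ref{Lipschitz holomomy} for the billiard map).
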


Let $ \mathcal{N}_{\epsilon}^+ \subset \mathcal{M} $ be the $ \epsilon $-neighbourhood of $ \mathcal{N}^+ $ for $ \epsilon>0 $. The Lebesgue measure of a submanifold $ W \subset \mathcal{M} $ is denoted by $ \nu_{W} $.  A smooth submanifold $ W \subset \mathcal{M} $ is called a \emph{$ u $-manifold} if the dimension of $ W $ is equal to the dimension of the unstable subspaces of $ \mathcal{F} $, and the tangent space of $ W $ at $ x $ is contained in $ C^{u}(x) $ for every $ x \in W $. 

\medskip
\noindent
{\bf Condition H:} There exist constants $ C>0 $, $ a \in (0,1) $, $ \beta>0 $ and $ \epsilon_{0}>0 $ such that for every $ u $-manifold $ W $, every $ n \ge 1 $ and every $ \epsilon \in (0,\epsilon_{0}) $,
\[
\nu_{W}\left(W \cap \mathcal{F}^{-n}(\mathcal{N}_{\epsilon}^+)\right) \le C \epsilon^{\beta} \left(a^{n} + \nu_{W}(W)\right).
\]

Roughly speaking, this condition states that the relative measure of points in a $u$-manifold ending up in a small neighbourhood of $\mathcal{N}^+$ is of the same order as the size of the neighbourhood. To the best of our knowledge, Condition~H was first introduced~{\cite{CZ} (see also~\cite{CM}).

Theorem~\ref{th:sataev} below contains the main results of Pesin and Sataev concerning SRB measures for hyperbolic piecewise smooth maps. 
It states that if such a map $ \mathcal{F} $ satisfies Condition~H, then $ \mathcal{F} $ admits SRB measures, each of them being a convex combinations of finitely many ergodic SRB measures~\cite[Theorems~5.12 and 5.15]{Sataev92}. Moreover, every ergodic SRB measure decomposes into finitely many Bernoulli components cyclically permuted by $ \mathcal{F} $~\cite[Theorem~4]{pe}, and the periodic orbits of $ \mathcal{F} $ are dense in the attractor $ \mathcal{A} $~\cite[Theorem~11]{pe}. 

\begin{theorem}
\label{th:sataev}
Let $ \mathcal{F} $ be a hyperbolic piecewise smooth map satisfying Condition~H. There exist finitely many ergodic SRB measures $ \mu_{1},\ldots,\mu_{m} $ concentrated on pairwise disjoint subsets $ \mathcal{E}_{1},\ldots,\mathcal{E}_{m} $ of the attractor $ \mathcal{A} $ such that 
\begin{enumerate}
\item for every SRB measure $ \mu $, there exist $ \alpha_{1},\ldots,\alpha_{m} \ge 0 $ with $ \sum^{m}_{i=1} \alpha_{i} = 1 $ such that $ \mu = \sum^{m}_{i} \alpha_{i} \mu_{i} $, 
\item for each $ i = 1,\ldots,m $, there exist disjoint subsets $ \mathcal{M}_{i,1},\ldots,\mathcal{M}_{i,k_{i}} $ with $ k_{i} \in \Nn $ such that $ \mathcal{E}_{i} = \bigcup^{k_{i}}_{j=1} \mathcal{M}_{i,j} $ (mod 0), $ \mathcal{F}(\mathcal{M}_{i,j}) = \mathcal{M}_{i,j+1} $ for $ 1 \le j < k_{i} $, $ \mathcal{F}(\mathcal{M}_{i,k_{i}}) = \mathcal{M}_{i,1} $, and the system $ (\mathcal{F}^{k_{i}}|_{\mathcal{M}_{i,j}},\mu_{i,j}) $ with $ \mu_{i,j} $ being the normalized restriction of $ \mu_{i} $ to $ \mathcal{M}_{i,j} $ 
is Bernoulli,
\item the set of periodic points of $ \mathcal{F} $ is dense in $ \mathcal{A} $.
\end{enumerate}
\end{theorem}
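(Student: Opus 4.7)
The plan is to derive Theorem~\ref{th:sataev} by assembling three existing results for hyperbolic piecewise smooth maps, using Condition~H as the ingredient that links them to the dynamics of $\mathcal{F}$. Because Conditions~A1 and~A2 are already in force, Proposition~\ref{prop:stable manifolds} supplies local stable and unstable manifolds of uniform size on each compact set $\mathcal{D}^{\pm}_{\epsilon,l}$; Condition~H adds the control of how much mass sitting on a $u$-manifold falls into an $\epsilon$-neighbourhood of $\mathcal{N}^{+}$ after iteration, which is exactly the ingredient needed to make the classical Pesin--Sataev SRB machinery run.

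For claim~(1), I would invoke~\cite[Theorems~5.12 and~5.15]{Sataev92} directly. The construction begins by taking normalised Lebesgue measure on a $u$-manifold $W$ whose image under forward iteration accumulates on $\mathcal{A}$, pushing it forward by $\mathcal{F}^{n}$ and averaging; Condition~H prevents the mass from concentrating on $\mathcal{N}^{+}$ and yields a weak-$*$ limit that is supported on $\mathcal{A}$ and has absolutely continuous conditionals on local unstable manifolds, i.e.\ an SRB measure in the sense of Definition~\ref{de:srb}. Sataev then shows that the set of all SRB measures forms a finite-dimensional simplex whose extreme points are ergodic SRB measures $\mu_{1},\dots,\mu_{m}$ supported on pairwise disjoint invariant sets $\mathcal{E}_{1},\dots,\mathcal{E}_{m}$, which gives the convex-combination statement in~(1). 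For claim~(2), I would quote~\cite[Theorem~4]{pe}: for each $\mu_{i}$ there is a spectral decomposition $\mathcal{E}_{i}=\bigcup_{j} \mathcal{M}_{i,j}$ into pieces cyclically permuted by $\mathcal{F}$ on which the return map with the normalised restriction of $\mu_{i}$ is metrically Bernoulli. The underlying argument is the Hopf--Pesin absolute-continuity technique along the foliations of Proposition~\ref{prop:stable manifolds}, upgraded from $K$-mixing to Bernoulli by Ornstein theory. Finally, claim~(3) is~\cite[Theorem~11]{pe}, which produces hyperbolic periodic orbits inside any rectangle of local stable and unstable leaves in $\mathcal{D}^{0}_{\epsilon}$ by a shadowing/horseshoe argument; density in $\mathcal{A}$ follows because $\mathcal{D}^{0}_{\epsilon}$ is visited by $\mu_{i}$-almost every orbit.

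The only real obstacle, and in my view the only item requiring care, is the translation of hypotheses. The formulations in~\cite{pe} and~\cite{Sataev92} differ slightly in notation and packaging from Conditions~A1, A2, H as stated in Section~\ref{se:hyperbolic}; in particular one has to check that our Definition~\ref{de:srb} coincides with Pesin's notion of \emph{Gibbs $u$-measure} (this is essentially tautological once one uses the definition of $W^{u}_{loc}$) and that Sataev's regularity assumptions on $\mathcal{N}^{\pm}$ are implied by our Definition~\ref{def:piecewise map} together with Condition~H. Once these identifications are verified, no further estimate is needed: claims~(1), (2) and~(3) of Theorem~\ref{th:sataev} read off from~\cite[Theorems~5.12,~5.15]{Sataev92}, \cite[Theorem~4]{pe} and~\cite[Theorem~11]{pe} respectively.
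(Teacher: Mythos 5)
Your proposal matches the paper's treatment of this theorem: the statement is assembled by quoting Sataev (Theorems~5.12 and~5.15) for claim~(1) and Pesin (Theorems~4 and~11) for claims~(2) and~(3), with the only genuine work being the translation of hypotheses. The one piece of mathematics the paper actually supplies is Lemma~\ref{le:equivalence}, which shows that Condition~H is equivalent to Sataev's Conditions~H3 and~H4 (a Fubini argument over a smooth family of $u$-manifolds for H3, and explicit choices of $\beta'\in(0,\beta)$, $\epsilon_{1}$, $B(W)$ and $m(W)$ for H4); you correctly single out this verification as the remaining obstacle, but you leave it unperformed, so to complete the argument you should carry it out rather than assert that the identifications hold.
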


The system $ (\mathcal{F}^{k_{i}}|_{\mathcal{M}_{i,j}},\mu_{i,j}) $ is called a \emph{Bernoulli component} of $ \mathcal{F} $.

We observe that Pesin obtained a weaker result than Conclusion~(1) of Theorem~\ref{th:sataev}. He proved the existence of SRB measures, and that each SRB measure is a convex combinations of countably many ergodic SRB measures.

Condition~H does not appear in the works of Pesin and Sataev. However, this condition is equivalent  to Conditions~H3 and~H4 assumed by Sataev. Pesin assumed similar but weaker conditions. Conditions~H3 and H4 of Sataev are the following.

\medskip
\noindent
{\bf Condition H3:} There exist positive constants $ B,\beta',\epsilon_{1} $ such that 
\[
\nu(\mathcal{F}^{-n}(\mathcal{N}^{+}_{\epsilon})) < B \epsilon^{\beta'} \qquad \text{for } n \ge 1 \text{ and } \epsilon \in (0,\epsilon_{1}).
\] 

A smooth submanifold $ W \subset M $ is a \emph{$ u $-manifold} if the dimension of $ W $ is equal to the one of the unstable subspaces of $ \mathcal{F} $, and the tangent space of $ W $ is contained in $ C^{u}(x) $ for every $ x \in W $. 

\medskip
\noindent
{\bf Condition H4:} There exist positive constants $ \beta' $ and $ \epsilon_{1} $ such that for every $ u$-manifold $ W $, there exist an integer $ m = m(W) $ and a constant $ B=B(W)>0 $ such that for every $ 0 < \epsilon < \epsilon_{1} $,
\begin{enumerate}
\item $ \nu_{W}(W \cap \mathcal{F}^{-n}(\mathcal{N}^{+}_{\epsilon})) < \epsilon^{\beta'} \nu_{W}(W) \;\; $ for $ n > m $,
\item $ \nu_{W}(W \cap \mathcal{F}^{-n}(\mathcal{N}^{+}_{\epsilon})) < B \epsilon^{\beta'} \nu_{W}(W) \;\; $ for $ n \ge 1 $.
\end{enumerate}
\medskip

For completeness, we provide the proof of the equivalence between Condition~H and Conditions~H3 and~H4.

\begin{lemma}
\label{le:equivalence}
	Condition~H is equivalent to Conditions~H3 and~H4.
\end{lemma}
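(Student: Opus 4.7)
The plan is to prove both implications of the equivalence, beginning with the easier direction.

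\emph{Condition H implies H3 and H4.} Given the constants $C, a, \beta, \epsilon_0$ in H, I would fix $\beta' \in (0, \beta)$ and $\epsilon_1 \in (0, \epsilon_0]$ small enough that $C\epsilon_1^{\beta-\beta'} \le 1/2$. For each $u$-manifold $W$, set
$$
m(W) := \left\lceil \frac{\log(1/\nu_W(W))}{\log(1/a)} \right\rceil, \qquad B(W) := C\left(1 + \frac{a}{\nu_W(W)}\right).
$$
If $n > m(W)$, then $a^n < \nu_W(W)$, and H yields $\nu_W(W \cap \mathcal{F}^{-n}(\mathcal{N}_\epsilon^+)) \le 2C\epsilon^\beta \nu_W(W) \le \epsilon^{\beta'}\nu_W(W)$, which is H4(1). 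For every $n \ge 1$, H gives H4(2) with $B(W)$ as above. To derive H3, I would use Condition A2(2) (constancy of $\dim P^u$) to cover $\overline{\mathcal{U}}$ by finitely many boxes smoothly foliated by $u$-manifolds of uniformly bounded $\nu_W$-length; disintegrating $d\nu = d\nu_W\,d\sigma(W)$ along this foliation and integrating the bound in H yields $\nu(\mathcal{F}^{-n}(\mathcal{N}_\epsilon^+)) \le B\epsilon^\beta$.

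\emph{Conditions H3 and H4 imply H.} The substantive direction must eliminate the $W$-dependence of $B(W)$ and $m(W)$. I would split the estimate by $n$. For $n > m(W)$, H4(1) already reads $\nu_W(W \cap \mathcal{F}^{-n}(\mathcal{N}_\epsilon^+)) < \epsilon^{\beta'}\nu_W(W)$, which fits the $\nu_W(W)$-term required by H with $C = 1$, $\beta = \beta'$. For $n \le m(W)$, I need a $W$-independent bound of the shape $C\epsilon^\beta a^n$. The key tool is a change of variables along the unstable foliation: by Condition A2(5), $\|D\mathcal{F}^n v\| \ge c\lambda^n\|v\|$ for $v \in C^u(x)$, whence the unstable Jacobian of $\mathcal{F}^n$ is at least $(c\lambda^n)^{\dim P^u}$, and
$$
\nu_W(W \cap \mathcal{F}^{-n}(\mathcal{N}_\epsilon^+)) \le (c\lambda^n)^{-\dim P^u}\, \nu_{\mathcal{F}^n(W)}(\mathcal{F}^n(W) \cap \mathcal{N}_\epsilon^+).
$$
Partitioning $\mathcal{F}^n(W)$ into pieces matching the finite foliation cover built in the first direction, each piece has uniformly bounded $\nu_W$-length, so applying H4(2) to each piece (and H3 to control the ambient contribution) yields $\nu_{\mathcal{F}^n(W)}(\mathcal{F}^n(W) \cap \mathcal{N}_\epsilon^+) \le B_0\epsilon^{\beta'}$ with $B_0$ independent of $W$. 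Setting $a := \lambda^{-\dim P^u} \in (0,1)$ then produces the $a^n$-factor required by Condition H.

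\emph{Main obstacle.} The crux is uniformising $B(W)$ across all $u$-manifolds. My plan addresses this by the standard-size reduction via a finite foliation cover of $\overline{\mathcal{U}}$ combined with the uniform hyperbolicity of A2; this absorbs the $W$-dependence of $B(W)$ into the geometric expansion factor $a^n$. The Fubini-type disintegration along $u$-foliations, used in both directions, is routine once such a cover has been fixed.
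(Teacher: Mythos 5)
Your first implication (Condition~H implies H3 and~H4) is correct and coincides with the paper's own argument: the paper makes essentially the same choices, $m(W)=\lfloor \log\nu_W(W)/\log a\rfloor$ and $B(W)=(1+a/\nu_W(W))/2$, runs the same two-case estimate for~H4, and obtains~H3 by the same Fubini argument over a smooth family of $u$-manifolds covering $\mathcal{U}$. (Note that the paper regards this as the substantive direction and dismisses the converse as trivial, so the roles are inverted relative to your write-up, but the content of this half matches.)

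The gap is in your converse, in the regime $n\le m(W)$ where you must manufacture the $a^n$ term. The pivotal claim that $\nu_{\mathcal{F}^n(W)}(\mathcal{F}^n(W)\cap\mathcal{N}^+_\epsilon)\le B_0\,\epsilon^{\beta'}$ with $B_0$ independent of $n$ and $W$ does not follow from H3 and~H4. First, $\mathcal{F}^n(W)$ is not a single $u$-manifold but a union of connected components whose number may grow exponentially in $n$, and H4(2) only controls preimages $\mathcal{F}^{-k}(\mathcal{N}^+_\epsilon)$ for $k\ge 1$ (not the direct intersection with $\mathcal{N}^+_\epsilon$) with a constant $B(W')$ that H4 permits to depend on $W'$ without any uniformity over these pieces. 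Second, and more decisively, the total $u$-measure of $\mathcal{F}^n(W)$ is at least of order $(c\lambda^n)^{\dim P^u}\nu_W(W)$ by A2(5), so any bound on $\nu_{\mathcal{F}^n(W)}(\mathcal{F}^n(W)\cap\mathcal{N}^+_\epsilon)$ that is proportional to this total measure---which is what a per-piece estimate delivers---cancels against your Jacobian factor $(c\lambda^n)^{-\dim P^u}$ and returns only the $\epsilon^{\beta'}\nu_W(W)$ term, never an $\epsilon^\beta a^n$ term. The competition between the expansion $\lambda^n$ and the number of pieces created by the singular set is precisely the nontrivial content of the $n$-step expansion and of Theorem~\ref{thm growth lemma}; it cannot be recovered for free from H3/H4, whose constants $B(W)$ and $m(W)$ carry no stated relation to $\nu_W(W)$. (To be fair, the paper offers no argument for this implication either, asserting only that it is trivial; the implication actually needed to invoke Sataev's results is the one you prove correctly.)
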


\begin{proof}
	The fact~H3 and H4 imply H is trivial. We prove the other direction of the equivalence. Since $ a \in (0,1) $ and $ \nu_{W}(W) $ has a uniform upper bound in $ W $, there exists a constant $ B $ such that $ C(a^{n}+\nu_{W}(W)) < B $ for every $ n \ge 1 $ and every $ u $-manifold $ W $. From~H, it follows that if $ \beta'=\beta $ and $ \epsilon_{1} = \epsilon_{0} $, then 
	\[ 
	\nu_{W}(W \cap \mathcal{F}^{-n}(\mathcal{N}_{\epsilon}^{+})) < B \epsilon^{\beta'} 
	\] 
	for every $ n \ge 1 $ and every $ u $-manifold $ W $. Condition~H3 follows from the previous inequality by covering $ \mathcal{U} $ with a smooth family of $ u $-manifolds, and using Fubini's Theorem. Conditions~H4 follows directly from~H by taking 
\[ 
\beta' \in (0,\beta), \qquad \epsilon_{1} = \min \left\{\epsilon_{0},(2C)^\frac{1}{\beta'-\beta}\right\},
\]
and
\[
B(W) = \frac{1+a/\nu_{W}(W)}{2}, \qquad m(W) = \left\lfloor \frac{\log \nu_{W}(W)}{\log a} \right\rfloor,
\]
where $ \lfloor x \rfloor $ is the integer part of $ x $. Indeed, if $ \epsilon<\epsilon_{1} $ and $ n>m(W) $, then 
\begin{align*}
\nu_{W}(W \cap \mathcal{F}^{-n}(\mathcal{N}^{+}_{\epsilon})) & \le C \epsilon^{\beta} \left(a^{n} + \nu_{W}(W)\right) \\ & \le 2C \epsilon^{\beta} \nu_{W}(W) \le \epsilon^{\beta'} \nu_{W}(W).
\end{align*}
Moreover, for every $ n \ge 1 $,
\begin{align*}
\nu_{W}(W \cap \mathcal{F}^{-n}(\mathcal{N}^{+}_{\epsilon})) & \le C \epsilon^{\beta} \left(a^{n} + \nu_{W}(W)\right) \le C \epsilon^{\beta} (a+\nu_{W}(W)) \\ & \le 2C \epsilon^{\beta} B(W) \nu_{W}(W) \le \epsilon^{\beta'} B(W)\nu_{W}(W).
\end{align*} 
\end{proof}

\subsection{SRB measures for polygonal billiards}
Recall that $\Phi_{f,P}$ denotes the billiard map for the polygon $P$ with a contracting reflection law $f$ satisfying the conditions introduced at the end of Section~\ref{sec:billiard map}. As before, we will simply write $ \Phi $ for $ \Phi_{f,P} $ when no confusion can arise. Also, recall that $N^+=V\cup S \cup \partial M$ and $N^-=\partial(\Phi(M\setminus N^+))$. 

The sets $ M,N,N^{\pm},D,D^{\pm}_{\epsilon},A,\ldots $ are the analog for the billiard map $ \Phi $ of the sets $ \mathcal{M},\mathcal{N},\mathcal{N}^{\pm},\mathcal{D},\mathcal{D}^{\pm}_{\epsilon},\mathcal{A},\dots $ for a general piecewise smooth map $ \mathcal{F} $. We also observe that for $ \Phi $, the analog of $ \mathcal{U} $ is the set $ M \setminus \partial M $.

We say that a polygon $P$ has \textit{no parallel sides facing each other} if the endpoints of every straight segment contained inside $P$ and
joining orthogonally two sides of $P$ are vertices of $P$. Notice that $P$ has no parallel sides facing each other if and only if $\Phi$ has no periodic orbits of period two. The reflection law $ f $ does not play any role in the previous claim, because we assumed that $ f(0) = 0 $.

\begin{proposition}\label{prop:hyp} The map $ \Phi_{f,P} $ 
is piecewise smooth satisfying Condition~A1. Moreover, $ \Phi_{f,P} $ satisfies Condition~A2  
if and only if $P$ does not have parallel sides facing each other. 
\end{proposition}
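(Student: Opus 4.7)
The plan is to verify three claims separately: (i) that $\Phi_{f,P}$ is piecewise smooth in the sense of Definition~\ref{def:piecewise map}; (ii) that it satisfies Condition~A1 unconditionally; and (iii) that Condition~A2 holds if and only if $P$ has no parallel sides facing each other.

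For (i), classical results for polygonal billiards imply that $\Phi_P$ is a $C^2$ diffeomorphism from $M \setminus N^+$ onto its image (cf.~\cite{MDDGP13}), and composing with the $C^2$ diffeomorphism $R_f$ preserves this. For (ii), I would differentiate~\eqref{eq:derivative} once more. The entries of $D^2 \Phi_{f,P}$ are rational expressions in the trigonometric functions of $\bar\theta_1$, in $t(s,\theta)$ and its derivatives, and in $f'$ and $f''$, with denominators that are powers of $\cos\bar\theta_1$. Since $\cos\bar\theta_1$ vanishes precisely on the grazing set, which sits at polynomial distance from $N^+$, one obtains $\cos\bar\theta_1 \gtrsim \rho(x,N^+)^\kappa$ for some $\kappa > 0$; combined with uniform bounds on $t$, $f'$ and $f''$, this yields the first inequality of~A1 with a suitable $a > 0$. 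The second inequality follows from the analogous argument applied to $\Phi_{f,P}^{-1}$ and $N^-$.

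For the ``only if'' direction of (iii), suppose $P$ has two parallel sides facing each other. The segment orthogonal to both with endpoints in their interiors produces a period-two orbit $(s_0, 0) \leftrightarrow (s_1, 0)$ of $\Phi_{f,P}$; the assumption $f(0) = 0$ guarantees that this orbit is fixed by $\Phi_{f,P}^{2}$. At $x = (s_0, 0)$ we have $\bar\theta_1 = \theta = 0$, so~\eqref{eq:derivative} reduces to
\[
D\Phi_{f,P}(x) = - \begin{pmatrix} 1 & t \\ 0 & f'(0) \end{pmatrix},
\]
which has $-1$ as an eigenvalue along the horizontal direction. Hence $D\Phi_{f,P}^{2}(x)$ has $1$ as an eigenvalue, and no uniformly expanding cone can exist at $x$, so Condition~A2 fails.

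For the converse, the plan is to exhibit hyperbolic cone fields invariant under some fixed iterate of $\Phi_{f,P}$; standard rescaling reduces this to the one-step formulation of A2. Take $C^u(x)$ to be a cone around the horizontal direction, which is preserved by $D\Phi_{f,P}$ because~\eqref{eq:derivative} is upper-triangular, and take $C^s(x)$ to be a transverse cone; the contraction on $C^s$ under $D\Phi_{f,P}^{-1}$ is handled by the uniform bound $|f'| \le \lambda(f) < 1$. The crux, and the main obstacle, is the uniform expansion along the unstable cones: the one-step factor $\cos\theta/\cos\bar\theta_1$ need not exceed one, so one must control the iterated factor $\prod_{k=0}^{N-1} \cos\theta_k/\cos\bar\theta_{k+1}$. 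The absence of period-two orbits rules out precisely the trajectories along which this product could stay close to one indefinitely; a compactness argument on the space of admissible short orbits then produces an integer $N$ and a uniform constant $\lambda_0 > 1$ such that the product exceeds $\lambda_0$ at every point of the non-wandering set, giving both the required cone invariance after $N$ iterates and the uniform expansion. This step is essentially the hyperbolicity argument of~\cite{MDDGP13, MDDGP15}, which extends without structural change to the current setting of arbitrary polygons and arbitrary contracting reflection laws.
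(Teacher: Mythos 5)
Your proposal is correct and follows essentially the same route as the paper: the paper disposes of A1 by citing the Katok--Strelcyn estimates for the standard polygonal billiard map \cite[Theorem~7.2]{KS86} together with the boundedness of $f''$ and $(f^{-1})''$ (which is the content of your explicit differentiation of \eqref{eq:derivative}), and it obtains the A2 equivalence from \cite[Corollary~3.4]{MDDGP13}, which is exactly the period-two/cone-field argument you sketch and ultimately defer to. Your unpacking of the ``only if'' direction via the parabolic period-two orbit is a correct rendering of what that citation contains.
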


\begin{proof}
The first part is a direct consequence of the fact that the standard billiard map satisfies Condition A1 (see~\cite[Theorem~7.2]{KS86}) and that a reflection law $ f $ together with its inverse has bounded second derivatives. The second claim follows from~\cite[Corollary~3.4]{MDDGP13}.
\end{proof}

\begin{remark}
\label{re:horizontal}
It is easy to see that the horizontal direction ($ \theta = const. $) is always preserved by $ D \Phi_{f,P} $. If $ \Phi_{f,P} $ is uniformly hyperbolic, then the horizontal direction is indeed the expanding direction of $ \Phi_{f,P} $~\cite[Corollary~3.4]{MDDGP13}.	
\end{remark}

We can now state the first part of our main result. 

\begin{theorem}
\label{th:main}
If $P$ does not have parallel sides facing each other, then the conclusions of Theorem~\ref{th:sataev} hold for $ \Phi_{f,P} $ for every contracting reflection law $ f $. Moreover, each Bernoulli component of $ \Phi_{f,P} $ has 
exponential decay of correlations for H\"older observables.
\end{theorem}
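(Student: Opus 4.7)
The plan is to verify that $\Phi = \Phi_{f,P}$ satisfies the hypotheses of Theorem~\ref{th:sataev}, and then separately establish exponential decay of correlations for the resulting Bernoulli components.

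By Proposition~\ref{prop:hyp}, under the hypothesis that $P$ has no parallel sides facing each other, $\Phi$ is a piecewise smooth map satisfying Conditions~A1 and~A2. It therefore remains to check Condition~H, which by Lemma~\ref{le:equivalence} is equivalent to Conditions~H3 and~H4. The key ingredients are: the linear growth of the branching number $b_n \le (2n-1)b_1$ from Theorem~\ref{th:linear}; the uniform horizontal expansion of the unstable directions from Remark~\ref{re:horizontal} and Condition~A2; and the fact that $N^+_n$ is a finite union of monotone $C^2$ curves (Remark~\ref{re:monotonicity}).

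To verify Condition~H, given a $u$-manifold $W$, I would decompose $W \cap \Phi^{-n}(N^+_\epsilon)$ according to the connected components $W_i$ of $W \setminus N^+_n$. On each $W_i$, the map $\Phi^n|_{W_i}$ is a $C^2$ diffeomorphism expanding the horizontal direction by a factor at least $c\lambda^n$, so
\[
\nu_{W_i}(W_i \cap \Phi^{-n}(N^+_\epsilon)) \le c^{-1}\lambda^{-n} \cdot \nu_{\Phi^n(W_i)}(\Phi^n(W_i) \cap N^+_\epsilon),
\]
and the right-hand side is $O(\epsilon)$ per transversal crossing of a smooth component of $N^+$. The total number of components $W_i$, as well as the number of $N^+$-crossings on $\Phi^n(W)$, is controlled through Theorem~\ref{th:linear}: near any singular vertex of $N^+_n$ only at most $b_n = O(n)$ regular sectors are created, so the overall complexity grows polynomially in $n$. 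Summing over $W_i$, the contribution of components whose $\Phi^n$-image is of size $\ll 1$ decays geometrically at rate $1/\lambda$, while the contribution of large components is dominated by $\nu_W(W)$ times a uniformly bounded density of $N^+$-crossings. Choosing any $a \in (1/\lambda,1)$ absorbs the polynomial $b_n$-factor and yields the required bound $\nu_W(W \cap \Phi^{-n}(N^+_\epsilon)) \le C\epsilon^\beta(a^n + \nu_W(W))$.

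The main obstacle is exactly this accounting: without Theorem~\ref{th:linear} the number of components of $W \setminus N^+_n$ could in principle grow exponentially in $n$ and overwhelm the $\lambda^n$ expansion; the linear branching estimate is what forces polynomial complexity and makes Condition~H accessible. Once Condition~H is in place, Theorem~\ref{th:sataev} directly yields conclusions (1)--(3) of the statement, i.e.\ the existence of finitely many ergodic SRB measures, their Bernoulli decomposition, and the density of periodic points. For the exponential decay of correlations on each Bernoulli component $(\Phi^{k_i}|_{\mathcal{M}_{i,j}},\mu_{i,j})$, I would apply Young's tower construction to $\Phi^{k_i}$: uniform hyperbolicity (A2), one-dimensional unstable direction (Remark~\ref{re:horizontal}), bounded distortion from A1, and the linear complexity bound of Theorem~\ref{th:linear} together allow one to build a hyperbolic product set on which the return map has exponentially small tails, and hence a spectral gap for the induced transfer operator, giving exponential decay of correlations for H\"older observables.
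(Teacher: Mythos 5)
There is a genuine gap in your verification of Condition~H, which is the heart of the proof. You assert that the total number of connected components of $W\setminus N^+_n$ ``grows polynomially in $n$'' because of Theorem~\ref{th:linear}. But Theorem~\ref{th:linear} is a \emph{local} statement: it bounds the number of regular sectors of order $n$ sharing a single vertex, i.e.\ the number of pieces into which an arbitrarily short curve near one singular point is cut. Globally, each application of $\Phi$ can cut every existing piece into up to $d+1$ new pieces, so the total number of components of $W\setminus N^+_n$ for a curve $W$ of fixed length is in general exponential in $n$, and your bookkeeping (``choosing any $a\in(1/\lambda,1)$ absorbs the polynomial $b_n$-factor'') does not go through. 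The paper's route is necessarily two-staged: first Proposition~\ref{pr:chernov} converts the local bound $b_n\le(2n-1)b_1$ together with $A_n\ge c\Lambda^n$ into the $n$-step expansion condition $\beta(\Phi)<1$, which is a statement about the $\liminf_{\delta\to 0}$ over \emph{short} curves only; then Theorem~\ref{thm growth lemma} upgrades this to Condition~H by an inductive decomposition of the components according to the last time their image was contained in a ``long'' $h$-curve. That induction is exactly what tames the exponential global complexity, and it is absent from your argument. A second omission: $b_n$ counts only \emph{regular} sectors, so near tangential singularities (non-regular sectors) the piece count is not controlled by Theorem~\ref{th:linear} at all; the paper handles these separately via Lemma~\ref{le:divergent}, which shows the expansion coefficient blows up there, making their contribution to the sum in~\eqref{eq:chernov} arbitrarily small. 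Your proposal never addresses this case.

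Your overall skeleton (A1 and A2 from Proposition~\ref{prop:hyp}, then Condition~H, then Theorem~\ref{th:sataev}, then a separate statistical theorem for the decay of correlations) matches the paper, and the observation that Condition~H only needs to hold for $h$-curves is implicitly used in both. For the decay of correlations you propose a Young-tower construction where the paper instead verifies hypotheses H.1--H.5 of Chernov--Zhang for $\Psi^n$ and transfers back to $\Psi$ via Chernov's Proposition~10.1; these are closely related, but note that building exponentially small tails for the return time again requires the growth lemma, and you would also need the absolute continuity of the stable holonomy (Lemma~\ref{Lipschitz holomomy}), which your sketch does not mention.
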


The proof of this theorem is given in Subsection~\ref{su:main}.

\subsection{Growth lemma}
We introduce a new condition called $n$-step expansion, and prove that it implies Condition~H. Results of this type are called \emph{growth lemmas} (for instance, see~\cite[Section~5]{CM}). We adopt this terminology. The $ n $-step expansion condition was introduced in~\cite{CZ}. Rather than giving the most general formulation of this condition, we formulate it only for the billiard map $ \Phi $.

\begin{defn}\label{h-curve}
A horizontal open segment contained in $ M \setminus \partial M $ is called a {\em $h$-curve}. 
\end{defn}

\begin{defn}
We say that $\Phi$ satisfies the \textit{$n$-step expansion} condition if there exists $ n \in \Nn $ such that
\begin{equation}
\label{eq:chernov}
\beta(\Phi):= \liminf_{\delta \to 0} \sup_{\Gamma \in \mathcal{H}(\delta)} \sum_{\gamma \in \pi_0(\Gamma\setminus N_n^+)} \frac{1}{a_{n}(\gamma)} < 1,
\end{equation}
where $ \mathcal{H}(\delta) $ is the set of $h$-curves of length less than or equal to $ \delta $, $\pi_0(\Gamma\setminus N_n^+)$ the set of connected components of $\Gamma\setminus N_n^+$ and $ a_{n}(\gamma) $ is the least expansion coefficient of $ D \Phi^{n}|_{(1,0)} $ on $\gamma$, i.e. 
\[ 
a_{n}(\gamma) = \inf_{x \in \gamma} \|D_{x}\Phi^{n}|_{(1,0)}\|.
\]
\end{defn}

Given an $ h $-curve $ \gamma $, we denote by $ \ell_{\gamma} $ the Lebesgue of $ \gamma $. We will drop the index $ \gamma $ in $ \ell_{\gamma} $ when no confusion can arise about which curve $ \gamma $ the measure $ \ell $ refers to.

Recall that $N_\epsilon^+$ denotes the $\epsilon$-neighborhood of $N^+$.

\begin{theorem}[Growth lemma]
\label{thm growth lemma}
If $\Phi$ satisfies the $n$-step expansion condition, then there exist $\beta(\Phi) \le a <1$, $\varepsilon_0>0$ and $C>0$ such that for any $h$-curve $\Gamma$, $r\geq0$ and $0<\varepsilon<\varepsilon_0$, 
\begin{equation}\label{H4}
\ell\left( \Gamma \cap \Phi^{-r}(N_\varepsilon^+)  \right) \leq C \varepsilon (a^{r}+\ell(\Gamma)).
\end{equation}
\end{theorem}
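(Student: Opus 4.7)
The plan is to prove \eqref{H4} by induction on $r$ in blocks of size $n$, adapting the standard growth-lemma argument of~\cite{CZ} (see also~\cite[Section~5]{CM}). First I would fix $a\in(\beta(\Phi),1)$ and $\delta_0>0$ so that, by the $n$-step expansion condition,
\[
\sum_{\gamma\in\pi_0(\Gamma_0\setminus N^+_n)}\frac{1}{a_n(\gamma)}\le a
\]
for every $h$-curve $\Gamma_0$ with $\ell(\Gamma_0)\le\delta_0$. The base case $0\le r<n$ reduces to a transversality argument: by Remark~\ref{re:monotonicity} the curves of $V\cup S^+_n$ are strictly decreasing and $C^2$, and by Lemma~\ref{le:boundary} their closures stay away from $\partial M$, so they meet any $h$-curve transversally with slopes uniformly bounded away from horizontal. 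Consequently $\ell(\Gamma\cap(V\cup S^+_n)_\varepsilon)\le C'\varepsilon$, and the horizontal piece $\partial M$ of $N^+$ contributes at most $\min\{2\varepsilon,\ell(\Gamma)\}$, which is absorbed by the right-hand side of \eqref{H4}.

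For the inductive step I would assume the estimate at $r=kn$ and take $\Gamma$ with $\ell(\Gamma)\le\delta_0$. Decomposing
\[
\ell(\Gamma\cap\Phi^{-(k+1)n}(N^+_\varepsilon))=\sum_{\gamma\in\pi_0(\Gamma\setminus N^+_n)}\ell\bigl(\gamma\cap\Phi^{-n}(\Phi^{-kn}(N^+_\varepsilon))\bigr),
\]
on each smooth piece $\gamma$ the map $\Phi^n$ is a $C^2$ diffeomorphism that preserves the horizontal direction (Remark~\ref{re:horizontal}), so $\Phi^n(\gamma)$ is again an $h$-curve. A change of variables along $\gamma$ together with a bounded distortion estimate for $\|D\Phi^n|_{(1,0)}\|$ on $\gamma$ yields
\[
\ell(\gamma\cap\Phi^{-n}(E))\le \frac{D}{a_n(\gamma)}\,\ell(\Phi^n(\gamma)\cap E)
\]
for any measurable $E$. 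Taking $E=\Phi^{-kn}(N^+_\varepsilon)$ and applying the inductive hypothesis on each $\Phi^n(\gamma)$, the ``expansion'' term contributes $a^k\sum_\gamma 1/a_n(\gamma)\le a^{k+1}$, while the ``length'' term gives $\sum_\gamma \ell(\Phi^n(\gamma))/a_n(\gamma)\le D\sum_\gamma \ell(\gamma)\le D\ell(\Gamma)$ by distortion. This closes the induction for short curves after adjusting $C$. For $\ell(\Gamma)>\delta_0$ I would partition $\Gamma$ into at most $1+\ell(\Gamma)/\delta_0$ sub-$h$-curves of length $\le\delta_0$, apply the short-curve bound to each, and sum; the leftover $r\bmod n$ iterates are absorbed by one extra application of the base case.

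The main technical obstacle is the bounded distortion estimate for $\|D\Phi^n|_{(1,0)}\|$ on pieces $\gamma\in\pi_0(\Gamma\setminus N^+_n)$, since Condition~A1 only supplies the pointwise bound $\|D^2\Phi\|\le\widetilde{A}\,\rho(\cdot,N^+)^{-a}$, which blows up near the singular set. The standard resolution is to sum the logarithmic variations of $\|D\Phi|_{(1,0)}\|$ along the $n$-orbit of $\gamma$ as a geometric series dominated by the uniform expansion, thereby converting pointwise $C^2$ control into the multiplicative comparison of $\|D\Phi^n|_{(1,0)}\|$ at two points of the same smooth piece. This is the delicate ingredient that legitimates the change-of-variables step in the inductive bound.
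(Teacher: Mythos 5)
Your inductive scheme is a genuinely different organization from the paper's proof, which avoids induction altogether: the paper fixes $\Psi=\Phi^n$, partitions the components of $\Gamma\setminus N^+_{mn}$ according to the last block-time $k$ at which their ancestor curve $\Gamma_{kn,l}$ was long, sums the reciprocal expansion factors over the ``short ever since'' part of each history to get a factor $\zeta^{m-k}$, and uses $\ell(\Gamma_{kn,l})\ge\delta$ to convert the contribution of the long ancestors into a multiple of $\ell(\Gamma)$. Your one-block induction can be made to work and is arguably more transparent, but only because of a fact you do not invoke and on which the paper leans explicitly: the restriction of $\Phi$ (hence of $\Phi^n$) to a component of an $h$-curve minus the singular set is \emph{affine}, since by \eqref{eq:derivative} the horizontal derivative $-\cos\theta/\cos\bar\theta_1$ is constant along such a component ($\theta$ is constant and $\bar\theta_1$ depends only on the pair of sides visited). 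This kills the distortion problem outright and makes your key inequality $\ell(\gamma\cap\Phi^{-n}(E))\le\ell(\Phi^n(\gamma)\cap E)/a_n(\gamma)$ and the identity $\ell(\Phi^n(\gamma))=a_n(\gamma)\,\ell(\gamma)$ hold with constant exactly $1$.

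Three points in your write-up are gaps as stated. First, with a genuine distortion constant $D>1$ the induction does not close: the expansion term picks up a factor $D$ per block, giving $D^k a^k$, and the log-variation argument you sketch cannot produce a uniform $D$ from Condition~A1 alone, because the components $\gamma$ have endpoints on $N^+_n$ and the bound $\|D^2\Phi\|\le\widetilde A\,\rho(\cdot,N^+)^{-a}$ blows up there (this is precisely why dispersing billiards need homogeneity strips). The distortion discussion should therefore be replaced by the one-line affineness observation; as a ``delicate ingredient'' it is a dead end. Second, long curves arise at every step of the induction as images $\Phi^n(\gamma)$, not only as the initial $\Gamma$; applying the inductive hypothesis to them via subdivision into at most $1+\ell(\Phi^n(\gamma))/\delta_0$ short pieces multiplies the constant at block $k+1$ by $1+a^{k+1}/\delta_0$, and one must note that $\prod_{k\ge1}(1+a^{k}/\delta_0)<\infty$ for the constants to remain bounded --- this bookkeeping is exactly what your ``absorbed by the right-hand side'' conceals. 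Third, the claim that $\partial M$ contributes at most $\min\{2\varepsilon,\ell(\Gamma)\}$ is false: an $h$-curve at height $\theta$ with $\pi/2-|\theta|<\varepsilon$ lies entirely in the $\varepsilon$-neighbourhood of $\partial M$ and contributes $\ell(\Gamma)$. The paper's fix is that $\Phi(M\setminus N^+)\subset S^1\times(-\lambda(f)\pi/2,\lambda(f)\pi/2)$, so for $\varepsilon<\varepsilon_0$ and $r\ge1$ the set $\Phi^{-r}(N^+_\varepsilon)$ meets no orbit through the $\varepsilon_0$-neighbourhood of $\partial M$ and it suffices to work with $N_\varepsilon$.
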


\begin{proof}
Since $ \Phi(M \setminus N) \subset (-\lambda(f) \pi/2,\lambda(f)\pi/2) $ and $ \lambda(f)<1 $, there exists a small $\varepsilon_0$ such that $\varepsilon_0$-neighborhood of $\partial M$ does not intersect $\Phi(M\setminus N^+)$. Therefore, it is enough to prove~\eqref{H4} with $ N_{\varepsilon}^+ $ replaced by $ N_{\epsilon} $.

Choose $\delta>0$ in such a way that
$$
\zeta:=\sup_{\Gamma \in \mathcal{H}(\delta)} \sum_{\gamma \in \pi_0(\Gamma\setminus N_1^+)} \frac{1}{a_{1}(\gamma)} < 1.
$$
Notice that $ \zeta^{1/n} \ge \zeta \geq \beta(\Phi)$. We call an $ h $-curve  \textit{long} if its length is larger or equal than $\delta$, otherwise we call it \textit{short}. 

Let $N_0^+=\emptyset$, and consider an $h$-curve $ \Gamma $. 
By Remark~\ref{re:monotonicity}, the set $\Gamma\cap N_p^+$ consists of $q_p$ elements for every $ p \ge 0 $. Hence,
$ \Phi^{p}(\Gamma\setminus N_p^+)$ is a union of pairwise disjoint $ h $-curves:
$$
\Phi^p(\Gamma\setminus N_p^+)=\bigcup_{i=1}^{q_p} \Gamma_{p,i}.
$$
Clearly $q_p\leq q_{p+1}$, and $q_0=1$ because $\Gamma_{0,1}=\Gamma$.

Write $ r = mn + u \ge 0 $ for $ m \ge 0 $ and $ 0 \le u \le n-1 $. Let $ \Psi = \Phi^{n} $.
Let $I_{k,l}$ be the set of indices $i\in\{1,\dots, q_{mn}\}$ such that 
\begin{enumerate}
\item $\Psi^{-m+k}(\Gamma_{mn,i}) \subset \Gamma_{kn,l}$,
\item $\Psi^{-s}(\Gamma_{mn,i})$ is contained in a short $ h $-curve for $1\leq s\leq m-k-1$. 
\end{enumerate}
Denote by $\LL$ the set of pairs of indices $(k,l)$ with $ k \in \{1,\dots, m\} $ and $ l \in \{1,\dots, q_{kn}\}$ such that $ \Gamma_{kn,l} $ is a long $ h $-curve.
The sets $\{I_{k,l}\}_{(k,l)\in\LL}$ are disjoint, and together with $I_{0,1}$ form a partition of $\{1,\ldots,q_{mn}\}$.
Then, we can estimate $\ell(\Gamma\cap \Phi^{-r}(N_\epsilon))$ as follows:

\begin{align*}
\ell(\Gamma\cap \Phi^{-r}(N_\epsilon))&=\ell(\Gamma\cap \Psi^{-m}(\Phi^{-u}(N_\epsilon)))\\
&=\sum_{i=1}^{q_m}\ell\left(\Psi^{-m}(\Gamma_{mn,i}\cap \Phi^{-u}(N_\varepsilon))\right)\\
&\leq
\ell(\Lambda_{0,1})+\sum_{(k,l)\in\LL} \ell\left(\Psi^{-k}(\Lambda_{k,l})\right),
\end{align*}
where 
$$
\Lambda_{k,l} := \bigcup_{i\in I_{k,l}} \Psi^{-m+k}(\Gamma_{mn,i}\cap \Phi^{-u}(N_\varepsilon)).
$$
Since the restriction of $\Psi^{-m+k}$ to $ \Gamma_{mn,i} $ is affine, we have
$$
\frac{\ell\left(\Psi^{-k}(\Lambda_{k,l})\right)}{\ell\left(\Psi^{-k}(\Gamma_{kn,l})\right)} = 
\frac{\ell(\Lambda_{k,l})}{\ell( \Gamma_{kn,l})}.
$$
Since $ \Gamma_{kn,l} $ is long for $ (k,l) \in \mathcal{L} $, it follows that for any $ r \ge 0 $ and any $0<\varepsilon<\varepsilon_0$,
$$
\ell\left(\Gamma\cap \Phi^{-r}(N_\varepsilon) \right) \leq \ell(\Lambda_{0,1})+
\frac1\delta \sum_{(k,l)\in\LL} \ell(\Lambda_{k,l})  \,\ell\left(\Psi^{-k}(\Gamma_{kn,l})\right).
$$

We now estimate $\ell(\Lambda_{k,l})$. Let $ t=m-k $. Given $ i \in I_{k,l} $, let $ i_{s} \in \{1,\ldots,q_{(m-t+s)n} \} $ be the index defined by $ \Psi^{-t+s}(\Gamma_{mn,i}) \subset \Gamma_{(m-t+s)n,i_{s}} $ for $ 1 \le s \le t $. Also, let $ a(s,i_{s}) $ be the least expansion of $ \Psi $ along the curve $ \Psi^{-1}(\Gamma_{(m-t+s)n,i_{s}}) $ for $ 1 \le s \le t $. Thus,
\begin{align*}
\ell(\Lambda_{k,l}) & \le \sum_{i \in I_{k,l}} \ell \left(\Psi^{-t}(\Gamma_{mn,i}\cap \Phi^{-u}(N_\varepsilon)) \right) \\
& \le \sum_{i \in I_{k,l}} \frac{\ell(\Gamma_{mn,i}\cap \Phi^{-u}(N_\varepsilon))}{a(1,i_{1}) \cdots a(t,i_{t})}.
\end{align*}

The uniform transversality between $N$ and the horizontal direction implies that there exists a constant $C'>0$ independent of $\Gamma$, $ r $ and $ i' $ such that 
\[
\ell(\Gamma_{r,i'}\cap N_\varepsilon) \leq C'\varepsilon.
\]

Let $ d>0 $ be the maximum number of intersection points of $ N $ with $ h $-curves. Also, let $ b>0 $ be the least expansion of $ \Phi $ along the horizontal direction. Note that $ b $ is not necessarily greater than 1, and that $ d $ and $ b $ depend only on $ \Phi $. Then, each $ \Gamma_{mn,i} $ contains at most $ (d+1)^{u} $ curves $ \Phi^{-u}(\Gamma_{r,i'}) $. This together the previous estimate implies 
\[
\ell(\Gamma_{mn,i}\cap \Phi^{-u}(N_\varepsilon)) \le \frac{(d+1)^{u}}{b^{u}} C'\epsilon.
\]

Define
$ I_{1} = \{i_{1} \colon i \in I_{k,l}\} $,
and 
$ I_{s} =\{(i_{s-1},i_{s}) \colon i \in I_{k,l} \} $ for $ 1 \le s \le t $.
Then, we have
\begin{align*}
\sum_{i \in I_{k,l}} & \frac{1}{a(1,i_{1}) \cdots a(t,i_{t})} \\ & =	\sum_{j_{1} \in I_{1}} \sum_{(j_{1},j_{2}) \in I_{2}} \cdots \sum_{(j_{t-1},j_{t}) \in I_{t}} \frac{1}{a(1,j_{1}) \cdots a(t,j_{t})} \\
& = \sum_{j_{1} \in I_{1}} \frac{1}{a(1,j_{1})}
 \sum_{(j_{1},j_{2}) \in I_{2}} \frac{1}{a(2,j_{2})}
 \cdots \sum_{(j_{t-1},j_{t}) \in I_{t}} \frac{1}{a(t,j_{t})}
\end{align*}
Since the curve $ \Gamma_{(m-t+s-1)n,j_{s-1}} \supseteq \Psi^{-1}(\Gamma_{(m-t+s)n,j_{s}}) $ is short, \eqref{eq:chernov} implies
\[
\sum_{(j_{t-1},j_{t}) \in I_{t}} \frac{1}{a(s,j_{s})} \le \zeta, \qquad 2 \le s \le t.
\]
The same argument does not necessarily apply to $ \sum_{j_{1} \in I_{1}} 1/a(1,j_{1}) $, because the curve $ \Gamma_{kn,l} \supseteq \Psi^{-1}(\Gamma_{(m-t+1)n,j_{1}}) $ may be long. However, we have
\[
\sum_{j_{1} \in I_{1}} \frac{1}{a(1,j_{1})} \le \frac{d+1}{b}.
\]
Hence,
\[
\sum_{i \in I_{k,l}} \frac{1}{a(1,i_{1}) \cdots a(t,i_{t})} \le \frac{d+1}{b} \zeta^{t-1},
\]
and so
\[
\ell(\Lambda_{k,l}) \le C'' \epsilon \, \zeta^{m-k}, \qquad C'' = \frac{C'}{\zeta} \left(\frac{d+1}{b}\right)^{u+1}.
\]

The above estimate implies
$$
\ell\left(\Gamma\cap \Phi^{-r}(N_\varepsilon) \right)
\leq
C''\varepsilon\,\zeta^m + \frac{C''\varepsilon}{\delta} \sum_{k=1}^m \zeta^{m-k} 
\sum_{l=1}^{q_{kn}} \ell\left(\Psi^{-k}(\Gamma_{kn,l})\right).
$$
Since 
\[
\sum_l \ell\left(\Psi^{-k}(\Gamma_{kn,l})\right)= \ell\left(\Psi^{-k}\left(\bigcup^{q_{kn}}_{l=1}\Gamma_{kn,l}\right)\right)\leq\ell(\Gamma),
\]
we obtain
\begin{align*}
\ell\left(\Gamma \cap \Phi^{-r}(N_\varepsilon)\right) & \leq
C''\varepsilon\,\zeta^m + \frac{C''\,\varepsilon\,\ell(\Gamma)}{\delta(1-\zeta)} \\
& \le C''\varepsilon \, \zeta^{\frac{r}{n}-1} + \frac{C''\,\varepsilon\,\ell(\Gamma)}{\delta(1-\zeta)},
\end{align*}
which implies the wanted conclusion.
\end{proof}

\subsection{Proof of Theorem~\ref{th:main}}
\label{su:main}
In this subsection, we prove the $ n $-step expansion for the billiard map and Theorem~\ref{th:main}. 

Since we assume that the polygon $ P $ does not have parallel sides facing each other, the map $ \Phi $ is uniformly hyperbolic by Proposition~\ref{prop:hyp}. 

Define the least expansion rate of $D\Phi^{n}$ along the unstable direction by
\[
A_{n}:=\inf_{x \in M^+} \|D_{x}\Phi^{n}|_{(1,0)}\|.
\]

Also define 
\begin{equation}
\label{eq:bound}	
 \alpha(x) := \frac{\cos \theta(x)}{\cos \bar{\theta}_{1}(x)} \ge \cos\left(\frac{\pi}{2}\lambda(f)\right),
\end{equation}
and 
\begin{equation}
\label{eq:expansion}	
\alpha_{n}(x) := \|D_{x} \Phi^{n}|_{(1,0)}\| =\alpha(x) \cdots \alpha(\Phi^{n-1}(x)).
\end{equation}

\begin{lemma}
\label{le:divergent}
Suppose that $ \Delta $ is a primary non-regular sector of order $ n $ with vertex $ x \notin \partial M $. If $ \Delta' $ is a sub-sector of $ \Delta $ of order $ m > n $, or $ \Delta' = \Delta $ and $ m=n $, then
\[
\lim_{\Delta' \ni y \to x} \alpha_{m}(y) = +\infty.
\]
\end{lemma}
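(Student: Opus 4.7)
The plan is to decompose
\[
\alpha_{m}(y) = \prod_{j=0}^{m-1} \alpha(\Phi^{j}(y))
\]
into a single divergent factor and remaining factors bounded below by a positive constant. By Remark~\ref{re:non-regular}, since $\Delta$ is primary non-regular of order $n$ with $x \notin \partial M$, the map $\Phi^{n-1}$ extends to a $C^{2}$ diffeomorphism on $\overline{\Delta}$, and $\Phi^{n-1}(x)$ is a tangential singularity. The geometric content of this tangency is that the pre-reflection angle at the $n$-th collision satisfies $|\bar{\theta}_{n}(y)| \to \pi/2$ as $y \to x$ within $\Delta$, and hence within any sub-sector $\Delta' \subset \Delta$; in particular $\cos \bar{\theta}_{n}(y) \to 0$.

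First I would isolate the critical factor
\[
\alpha(\Phi^{n-1}(y)) = \frac{\cos \theta(\Phi^{n-1}(y))}{\cos \bar{\theta}_{n}(y)}.
\]
The denominator tends to zero by the preceding observation. The numerator tends to $\cos \theta(\Phi^{n-1}(x))$, which is strictly positive: when $n=1$ this equals $\cos \theta(x) > 0$ because $x \notin \partial M$; when $n \ge 2$ the point $\Phi^{n-1}(x)$ lies in the image of $\Phi$, whose $\theta$-coordinates are confined to $(-\lambda(f)\pi/2,\lambda(f)\pi/2)$, giving $\cos \theta(\Phi^{n-1}(x)) \ge \cos(\lambda(f)\pi/2) > 0$. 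Hence $\alpha(\Phi^{n-1}(y)) \to +\infty$.

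Next I would argue that every other factor in the product is bounded below by a positive constant as $y$ ranges over $\Delta'$. For each $j \ge 1$, the point $\Phi^{j}(y)$ lies in the image of $\Phi$, so the estimate~\eqref{eq:bound} yields $\alpha(\Phi^{j}(y)) \ge \cos(\lambda(f)\pi/2) > 0$ uniformly in $y$. The remaining case $j=0$ (which matters only when $n \ge 2$) gives the factor $\alpha(y) = \cos \theta(y)/\cos \bar{\theta}_{1}(y)$; since Remark~\ref{re:non-regular} provides a $C^{2}$ extension of $\Phi^{n-1}$ to $\overline{\Delta}$ and the tangency first occurs at step $n$, we have $\bar{\theta}_{1}(x) \ne \pm\pi/2$, and together with $\cos \theta(x) > 0$ (from $x \notin \partial M$) this yields $\alpha(y) \to \alpha(x) > 0$ as $y \to x$.

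Multiplying these bounds, $\alpha_{m}(y)$ is the product of a factor diverging to $+\infty$ with factors bounded below by a positive constant uniform in $y$ (depending only on $m$, $\lambda(f)$, and $x$), whence $\alpha_{m}(y) \to +\infty$ as $\Delta' \ni y \to x$. The main subtlety I foresee is the identification of ``$\Phi^{n-1}(x)$ is a tangential singularity'' with the analytic limit $\cos \bar{\theta}_{n}(y) \to 0$: one must use that for a polygonal billiard the only way $\Phi^{n}$ can fail to extend smoothly to $\overline{\Delta}$ once $\Phi^{n-1}$ already does so is through a trajectory grazing a side of $P$ at a vertex, which forces $\bar{\theta}_{n}(y) \to \pm\pi/2$ along paths in $\Delta$ approaching $x$. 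Once this translation is accepted, the rest of the proof is a direct product estimate.
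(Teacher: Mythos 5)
Your proof is correct and follows essentially the same route as the paper's: isolate the factor $\alpha(\Phi^{n-1}(y))$, which diverges because $\Phi^{n-1}(x)$ is a tangential singularity, and bound the remaining factors below by $\cos(\pi\lambda(f)/2)$ via \eqref{eq:bound} together with $\Delta'\subseteq\Delta$. Your separate continuity argument for the $j=0$ factor is a minor extra precaution that the paper subsumes under \eqref{eq:bound}.
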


\begin{proof}
By Remark~\ref{re:non-regular}, the map $ \Phi^{n-1} $ is a $ C^{2} $ diffeomorphism on the closure of $ \Delta $, and $ \Phi^{n-1}(x) $ is a tangential singularity. Thus, $ \bar{\theta}_{1}( \Phi^{k}(x)) = \pm \pi/2 $, and so $ \alpha(\Phi^{n-1}(y)) \to +\infty $ as $ \Delta \ni y \to x $. The claim now follows from~\eqref{eq:bound},~\eqref{eq:expansion} and $ \Delta' \subseteq \Delta $. 
\end{proof}

\begin{proposition}
\label{pr:chernov}
The map $ \Phi $ has $n$-step expansion for every $ n $ sufficiently large.
\end{proposition}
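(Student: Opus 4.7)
The strategy is to combine three earlier results to show $\beta(\Phi)<1$ for all sufficiently large $n$. First, uniform hyperbolicity (Proposition~\ref{prop:hyp}, Condition~A2) yields constants $c>0$ and $\lambda>1$, independent of $n$, with $A_{n}\ge c\lambda^{n}$. Second, by Theorem~\ref{th:linear} the number of regular sectors of order $n$ meeting at a common vertex is bounded by $b_{n}\le(2n-1)b_{1}$, which grows only linearly in $n$. Third, by Lemma~\ref{le:divergent} (applied either directly or, via Remark~\ref{re:non-regular}, to the primary non-regular sector containing the one at hand), in every non-regular sector of order $n$ with vertex $x\notin\partial M$ the horizontal expansion $\alpha_{n}(y)$ diverges as $y\to x$.

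Fix a large $n$ (to be chosen), and take an arbitrary $h$-curve $\Gamma$ of length at most $\delta$, with $\delta>0$ small. Since $N_{n}^{+}$ consists of finitely many smooth curves and $\Gamma\cap\partial M=\emptyset$, for $\delta$ smaller than some $\delta_{0}(n)>0$ the curve $\Gamma$ meets at most one crossing point $x$ of distinct smooth components of $N_{n}^{+}$, and necessarily $x\notin\partial M$. Every connected component $\gamma$ of $\Gamma\setminus N_{n}^{+}$ then lies inside a single sector of order $n$ with vertex $x$, and I split the sum $\sum_{\gamma}1/a_{n}(\gamma)$ according to whether that sector is regular. On regular sectors $a_{n}(\gamma)\ge A_{n}$, so Theorem~\ref{th:linear} gives a regular contribution of at most $b_{n}/A_{n}$. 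For the non-regular contribution, the key point is that there are only finitely many primary non-regular sectors of order at most $n$ and their vertices form a finite subset of $M\setminus\partial M$. Lemma~\ref{le:divergent} applied to each of them produces a single function $\eta(\delta)\to 0$ as $\delta\to 0$ such that every $\gamma$ of length at most $\delta$ contained in any non-regular sector of order $n$ with such a vertex satisfies $a_{n}(\gamma)\ge 1/\eta(\delta)$. Hence the non-regular part is $o(1)$ as $\delta\to 0$, uniformly in $\Gamma$.

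Combining these two estimates yields
\[
\sup_{\Gamma\in\mathcal{H}(\delta)}\sum_{\gamma\in\pi_{0}(\Gamma\setminus N_{n}^{+})}\frac{1}{a_{n}(\gamma)}\le\frac{b_{n}}{A_{n}}+o(1)\le\frac{(2n-1)b_{1}}{c\lambda^{n}}+o(1).
\]
Taking $\liminf_{\delta\to 0}$ gives $\beta(\Phi)\le(2n-1)b_{1}/(c\lambda^{n})$, which is strictly less than $1$ for every $n$ sufficiently large because the right-hand side tends to $0$ as $n\to\infty$. This proves the $n$-step expansion condition.

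The main technical obstacle is ensuring that the non-regular contribution really is $o(1)$ uniformly over all $\Gamma\in\mathcal{H}(\delta)$. Concretely, one must show that Lemma~\ref{le:divergent} can be quantified in the form of a uniform lower bound on $\alpha_{n}(y)$ as a function of $\mathrm{dist}(y,x)$ alone, valid across the finite list of vertices of primary non-regular sectors of order at most $n$. This is where Remark~\ref{re:non-regular} is needed to reduce every non-regular sector of order $n$ to a sub-sector of a primary one with the same vertex, so that a single quantitative divergence rate governs all non-regular contributions simultaneously.
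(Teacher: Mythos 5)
Your argument is essentially the paper's own proof: both bound the contribution of components in regular sectors by $b_{n}/A_{n}$, which is less than $1$ for large $n$ by Theorem~\ref{th:linear} and uniform hyperbolicity, and both kill the remaining contribution via the divergence of $\alpha_{n}$ near the finitely many vertices of primary non-regular sectors (Lemma~\ref{le:divergent}). The only cosmetic difference is that the paper organizes the second part by first fixing an $\epsilon$-neighbourhood $C_{\epsilon}$ of those vertices and then taking $\delta$ small relative to $\epsilon$, whereas you send $\delta\to 0$ directly; the substance is the same.
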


\begin{proof}
Given a sector $ \Delta $ of order $ \le n $ with vertex $ x $, let 
\[ 
\Delta_{\epsilon} = \{y \in \Delta(x) \colon \dist(y,x)<\epsilon \} 
\] 
for $ \epsilon>0 $. Denote by $ C_{\epsilon} $ the union of all $ \Delta_{\epsilon} $ with $ \Delta $ being a primary non-regular sector of order $ \le n $ with vertex not belonging to $ \partial M $.

For a fixed $ \Gamma \in \mathcal{H}(\delta) $, we have 
\[
\sum_{\gamma\in \pi_0(\Gamma\setminus N_n^+)} \frac{1}{a_{n}(\gamma)} = \sum_{\gamma'} \frac{1}{a_{n}(\gamma')} + \sum_{\gamma''} \frac{1}{a_{n}(\gamma'')},
\]
where $ \sum_{\gamma'} $ and $ \sum_{\gamma''} $ denote the sum over the components of $ \Gamma \setminus N^{+}_{n} $ intersecting the complement of $ C_{\epsilon} $ and contained in $ C_{\epsilon} $, respectively. From the definition of $ b_{n} $, it follows that there are at most $b_n$  connected components of $ \Gamma \setminus N^{+}_{n} $ contained in the complement of $C_\epsilon$ provided that $\delta$ is sufficiently small.

Since $ \Phi $ is hyperbolic, there exist $ c>0 $ and $ \Lambda>1 $ such that $ A_{n} \ge c \Lambda^{n} $ for every $ n \in \Nn $, and $ b_{n} $ grows linearly in $ n $ by Theorem~\ref{th:linear}, we can find $ n_{0} \in \Nn $ such that $ A_{n} > b_{n} $ for every $ n \ge n_{0} $. Choose $ n \ge n_{0} $, which will be kept fixed throughout the rest of the proof. 

Now, $ N^{+}_{n} $ consists of finitely many curves that either are disjoint or intersect pairwise at finitely many points (see the proof of~\cite[Proposition~2.3]{MDDGP15}). Thus, there exists $ d(\epsilon) \to 0 $ as $ \epsilon \to 0 $ such that any two distinct components of $ C^{c}_{\epsilon} \cap N^{+}_{n} $ either have a distance greater than $ d(\epsilon) $ or meet at a point that is not a tangential singularity. In view of the choice of $ n $ and the fact that $ a_{n}(\gamma) \ge A_{n} $, there exists $ 0<\eta<1 $ such that
\begin{equation}
\label{eq:one}	
\sum_{\gamma'} \frac{1}{a_{n}(\gamma')} \le \frac{b_{n}}{A_{n}} = 1-\eta,
\end{equation}
provided that $ \delta < d(\epsilon) $.

From Lemma~\ref{le:divergent}, we have $ a_{n}(\gamma'') \to +\infty $ as $ \epsilon \to 0 $ for every $ \gamma'' $. Thus, by choosing $ \epsilon $ sufficiently small, we can make sure that 
\begin{equation}
\label{eq:two}	
\sum_{\gamma''} \frac{1}{a_{n}(\gamma'')} < \frac{\eta}{2}.
\end{equation}

Combining~\eqref{eq:one} and \eqref{eq:two}, which do not depend on $ \Gamma $, we obtain 
\[
\sum_{\gamma} \frac{1}{a_{n}(\gamma)} < 1-\frac{\eta}{2}
\]
for every $ \delta $ sufficiently small and for every $ \Gamma \in \mathcal{H}(\delta) $. This implies \eqref{eq:chernov}, and completes the proof.
\end{proof}

\begin{proof}[Proof of Theorem~\ref{th:main}]
First of all, we observe that for $ \mathcal{F} = \Phi $, Theorem~\ref{th:sataev} remains valid if Condition~H is satisfied only by $ h $-curves, since the local unstable manifolds of $ \Phi $ are $ h $-curves (see \cite[Proposition 2.7]{MDDGP15}). 
By Proposition~\ref{prop:hyp}, the map $ \Phi $ satisfies Conditions~A1 and A2. By Proposition~\ref{pr:chernov} and Theorem~\ref{thm growth lemma}, Condition~H holds for $ \Phi $ and $h$-curves. Hence, Theorem~\ref{th:sataev} applies to $\Phi$.

We now prove the second claim of the theorem. Consider a Bernoulli component $ (\Phi^{k_{i}}|_{\mathcal{M}_{i,j}},\mu_{i,j}) $ of $ \Phi $, and let $ \Psi = \Phi^{k_{i}}|_{\mathcal{M}_{i,j}} $. By Proposition~\ref{pr:chernov}, the $ n $-step expansion condition holds true for $ \Psi $ for some $ n \ge k_{i} $. It follows from~\cite[Proposition~10.1]{C} (see also~\cite[Theorem~10]{CZ}) that it is enough to establish the exponential decay of correlations for $ (\Psi^{n},\mu_{i,j}) $. To do that, we apply a theorem of Chernov and Zhang~\cite[Theorem~1]{CZ2009} to $ \Psi^{n} $. This theorem has five hypotheses~H.1-H.5. It is not difficult to see that~H.1 and~H2 follow from our Conditions~A2 and~A1, respectively. The finiteness of the number of smooth components of $ N^{+} $ and $ N^{-} $ follows from Remark~\ref{re:monotonicity} and the first part of the proof of Lemma~\ref{le:increasing}. Hypothesis H.3 is satisfied if we take as the $ \Psi^{n} $-invariant class of smooth $ u $-curves the set of all $ h $-curves. Indeed, this class satisfies the three conditions of H.3: i) the curvature of the $ h $-curves is clearly uniformly bounded, ii) the restriction of $ D \Psi^{n} $ along $ h $-curves has uniform distortion bounds because the restriction of $ \Psi^{n} $ to an $ h $-curve is a piecewise affine map, and iii) by Lemma~\ref{Lipschitz holomomy} (see also~\cite[Proposition 10]{pe}), the stable holonomy is absolutely continuous. Since $ (\Psi,\mu_{i,j}) $ is Bernoulli, hypothesis~H.4 is trivially satisfied. Thus, using~\cite[Theorem~1]{CZ2009}, we conclude that $ (\Psi^{n},\mu_{i,j}) $ has exponential decay of correlations for H\"older observables. The same is true for $ (\Psi,\mu_{i,j}) $ by~\cite[Proposition~10.1]{C}.
\end{proof}

\section{Basins of the ergodic SRB measures}
\label{sec:Basins}
Recall that $\Phi=\Phi_{f,P}$ denotes the billiard map for the polygon $P$ with contracting reflection law $f$. The aim of this section is to prove the following theorem.

\begin{theorem}
\label{thm  union of basins}
Let $ P $ be a polygon without parallel sides facing each other. Then the union of the basins of the ergodic SRB measures of $\Phi$ has full Lebesgue measure.
\end{theorem}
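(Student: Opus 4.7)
The plan is to combine three ingredients: (I) a Borel--Cantelli argument based on the growth lemma (Theorem~\ref{thm growth lemma}) showing that Lebesgue-a.e. $x \in M$ lies in some $\hat{D}^+_{\epsilon,l}$ and hence admits a local stable manifold of uniform size by Proposition~\ref{prop:stable manifolds}; (II) the SRB property of each ergodic $\mu_i$ combined with its ergodicity, yielding a full-$\mu_i$-measure set $\Lambda_i \subset A$ on which $\nu_{W^u_{loc}(y)}$-a.e. point of $W^u_{loc}(y)$ lies in $B(\mu_i)$; and (III) absolute continuity of the stable holonomy (already used in the proof of Theorem~\ref{th:main}), which transfers (II) from unstable leaves to Lebesgue-a.e. points of product neighborhoods, exploiting the fact that basins are saturated by local stable manifolds.

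For (I), I would foliate $M \setminus \partial M$ by the horizontal $h$-curves $\{\theta = c\}$ of length one and apply Theorem~\ref{thm growth lemma} with $\epsilon_n := l^{-1} e^{-n\epsilon}$ for small $\epsilon > 0$ and $l \in \Nn$. The resulting bound
\[
\ell\bigl(\Gamma \cap \Phi^{-n}(N^+_{\epsilon_n})\bigr) \le C\, l^{-1} e^{-n\epsilon}(a^n + 1)
\]
is summable in $n$ once $\epsilon < -\log a$, so Borel--Cantelli yields $\rho(\Phi^n(x), N^+) \ge l^{-1} e^{-n\epsilon}$ eventually for $\ell$-a.e. $x \in \Gamma$. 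Enlarging $l$ absorbs the initial finite segment, placing $x$ in some $\hat{D}^+_{\epsilon, l'(x)}$, and Fubini lifts the conclusion to Lebesgue-a.e. $x \in M$. Saturation of each basin $B(\mu_i)$ by local stable manifolds is automatic from $\rho(\Phi^n(x), \Phi^n(y)) \to 0$ along stable leaves and the continuity of the observable in the definition of the basin.

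For (II) and (III), Birkhoff's theorem applied to $\mu_i$ combined with the absolute continuity of its conditional measures on unstable manifolds (Definition~\ref{de:srb}) gives the stated full-$\nu_{W^u_{loc}(y)}$-measure property on $\mu_i$-a.e. unstable leaf. Forming a product neighborhood $\Pi(y)$ as the union of local stable manifolds issued from points of $W^u_{loc}(y)$, absolute continuity of the stable holonomy together with basin saturation then forces $\nu(\Pi(y) \setminus B(\mu_i)) = 0$, so $\Pi(y) \subset B(\mu_i)$ up to a Lebesgue-null set.

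To conclude, pick countably many $y_j \in \bigcup_i \Lambda_i$ whose product neighborhoods $\Pi(y_j)$ cover $A$ up to a $\nu$-null set (possible by compactness of $A$ and separability). For Lebesgue-a.e. $x \in M$, Step (I) together with $A = \overline{D}$ forces the forward orbit of $x$ to accumulate on $A$ and hence to eventually enter some $\Pi(y_j)$; since $\Phi$ is a piecewise $C^2$ diffeomorphism on $M^+$ preserving the Lebesgue null class under forward images, the conclusion $\Phi^n(x) \in B(\mu_i)$ passes to $x$ by $\Phi$-invariance of basins. The main obstacle is this final accumulation step: verifying that a Lebesgue-a.e. orbit actually visits a Pesin-good product neighborhood $\Pi(y_j)$ rather than remaining in a $\nu$-null subset of $A$. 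I would handle it by a Hopf-type argument: any $\Phi$-invariant set of positive Lebesgue measure must meet some $\Pi(y_j)$ in a set of positive Lebesgue measure, which by stable saturation and absolute continuity forces it to intersect a basin in positive measure, contradicting the definition of the bad set $M^+ \setminus \bigcup_i B(\mu_i)$.
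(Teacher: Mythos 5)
Your proposal assembles the right ingredients --- the growth lemma, uniform-size stable manifolds on $\hat{D}^+_{\epsilon,l}$, absolute continuity of the conditional measures of each $\mu_i$ on unstable leaves, absolute continuity (in fact Lipschitz continuity) of the stable holonomy, and product neighborhoods built from stable manifolds over an unstable leaf --- and all of these do appear in the paper's proof: your step (I) is essentially Lemma~\ref{lemma gamma D^+}, and steps (II)--(III) correspond to the sets $\Omega^0_l$ and $U_l$ constructed in Lemma~\ref{lemma gamma cap B mu >0 }. But the step you yourself flag as ``the main obstacle'' is exactly where the content of the theorem lies, and your proposed fix does not close it. The product neighborhoods $\Pi(y_j)$ are only available at points of the Pesin sets $\Omega^0_l$, so your covering of the attractor is only modulo sets that are null for the SRB measures; it gives no control over where the forward orbit of a Lebesgue-typical point $x\in M$ actually lands. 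The orbit accumulates on $A$, but nothing prevents it from accumulating only on the part of $A$ missed by the open set $\bigcup_j\Pi(y_j)$. Your ``Hopf-type'' patch --- that a forward-invariant set of positive Lebesgue measure must meet some $\Pi(y_j)$ in positive measure --- is unsubstantiated: there is no general reason an invariant positive-measure set must intersect a prescribed positive-measure open set, and proving that it does here is essentially equivalent to the statement you are trying to establish. This is a genuine gap, not a technicality.

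The paper closes this gap by a mechanism absent from your outline: it pushes forward normalized Lebesgue measure on an $h$-curve $\gamma$, forms the averages $\mu_{\gamma,n}=\frac1n\sum_{j=0}^{n-1}\Phi^j_*\Leb_\gamma$, and proves (Lemma~\ref{lemma mu D^0}) that every weak-$*$ limit $\mu$ is an \emph{invariant SRB measure} with $\mu(D^0)=1$; this requires a uniform integrability estimate for $\log\dist(\cdot,N^+)$ against $\mu_{\gamma,n}$, extracted from the growth lemma, to handle the discontinuities. Since $\mu$ is then a convex combination of the finitely many ergodic SRB measures, one gets $\mu(U_l)>9/10$ for suitable $l$, and weak-$*$ convergence against the \emph{open} set $U_l$ forces $\Leb_\gamma(\Phi^{-n}(U_l))>9/10$ for infinitely many $n$: this is what actually places a definite proportion of the forward images of $\gamma$ inside the product neighborhoods (Lemmas~\ref{lemma gamma cap B mu >0 } and~\ref{len gamma cap B(mu) >0}, the latter removing the auxiliary hypothesis $\gamma\cap D^-\neq\emptyset$ by passing to a sub-limit curve inside an unstable manifold). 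The theorem is then finished by a Lebesgue density point argument: if the complement of the union of basins had positive measure, one takes a density point $x$ of its trace on an $h$-curve, lets $R^{n_i}_\gamma(x)$ converge to a limit $h$-curve $\Gamma$ of definite length, applies Lemma~\ref{len gamma cap B(mu) >0} to $\Gamma$ to obtain a definite proportion of $\Gamma$ in a basin, and pulls this back affinely to contradict density. To repair your proof you would need to replace the orbit-accumulation heuristic by this limit-measure argument (or an equivalent one); much of the rest of your outline could then be retained.
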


Choose $\epsilon>0$ so that Proposition~\ref{prop:stable manifolds}  and Theorem~\ref{thm growth lemma} hold. From now on, to simplify our notation, we will drop the index $\epsilon$ from the symbols $\hat D^+_{\epsilon,l}$, $D^-_{\epsilon,l}$, $D^0_{\epsilon,l}$. 

To prove Theorem~\ref{thm union of basins}, we will follow the proof of~\cite[Proposition 4.2]{bovi}, where a similar result is proved for smooth maps.

\begin{defn}
Given a $C^1$ simple open curve $\gamma$ in $M$ and a point $x\in\gamma$, we say that \textit{$\gamma$ has size $\delta$ around $x$} if the length of the connected components of $\gamma\setminus\{x\}$ is greater than or equal to $\delta$. 
\end{defn}

\begin{defn}
Given an $h$-curve $\gamma$, a point $x\in \gamma $ and $ n \in \Nn $, let $R^n_\gamma (x)$ be the connected component of $\Phi^n (\gamma)$ containing $\Phi^n (x)$. Define
$$\gamma(n,\delta) =\left\{y\in\gamma \colon
 R_\gamma^n(y) \text{ has size } \delta \text{ around } \Phi^n(y)\right\}.$$
\end{defn}

Let $ \epsilon_{0}>0 $ and $ 0 < a < 1 $ be as in Condition~H, and let $ c>0 $ and $ \lambda>1 $ be as in Condition~A2.

\begin{lemma}
\label{lemma size delta cc}
There exists $ \widetilde{C} = \widetilde{C}(\Phi) > 0 $ such that for every $ h $-curve $ \gamma $, there exists $ n_{0} = n_{0}(\gamma) $ such that if $ n \ge n_{0} $ and $ 0 < \delta < c \epsilon_{0} $, then 
\[
\ell \left(\gamma \setminus \gamma(n,\delta) \right) \le \widetilde{C} \ell (\gamma) \delta. 
\]
\end{lemma}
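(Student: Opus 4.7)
The plan is to classify the bad set $\gamma\setminus\gamma(n,\delta)$ according to which type of endpoint of the connected component $R^n_\gamma(y)$ the iterate $\Phi^n(y)$ is close to, and then translate each type of closeness into the hypothesis of the growth lemma (Theorem~\ref{thm growth lemma}) via the uniform expansion along the horizontal direction. Since $\Phi$ preserves the horizontal direction (Remark~\ref{re:horizontal}), every connected component of $\Phi^n(\gamma\setminus N_n^+)$ is again an $h$-curve, and the endpoints of the closure of $R^n_\gamma(y)$ come in two flavors: \emph{(a)} images $\Phi^n(y^\ast)$ of the endpoints $y^\ast$ of $\gamma$, and \emph{(b)} limit points associated with a singularity $p\in\gamma\cap N_n^+$, for which there is a least index $0\le k\le n-1$ with $\Phi^k(p)\in N^+$.

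For endpoints of type (a), Condition~A2 combined with Remark~\ref{re:horizontal} gives $\|D_x\Phi^n|_{(1,0)}\|\ge c\lambda^n$, so the set of points $y\in\gamma$ whose image lies within arc length $\delta$ of $\Phi^n(y^\ast)$ along $R^n_\gamma(y)$ has length at most $\delta/(c\lambda^n)$; summing over the at most two endpoints of $\gamma$ yields a contribution of $2\delta/(c\lambda^n)$. For endpoints of type (b), the same expansion bound applied to $\Phi^{n-k}$ on the $h$-curve $\Phi^k(\gamma')$, where $\gamma'$ is the maximal subarc of $\gamma\setminus N_n^+$ containing $y$, shows that $\Phi^k(y)$ must lie within distance $\delta/(c\lambda^{n-k})$ of $\Phi^k(p)\in N^+$, whence $y\in\Phi^{-k}(N^+_{\delta/(c\lambda^{n-k})})$. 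Because $\delta<c\epsilon_0$, all the radii $\delta/(c\lambda^{n-k})$ lie in $(0,\epsilon_0)$, so the growth lemma applies term by term and yields
\[
\ell\!\left(\gamma\setminus\gamma(n,\delta)\right) \le \frac{2\delta}{c\lambda^n} + \sum_{k=0}^{n-1}\frac{C\delta}{c\lambda^{n-k}}\bigl(a^k+\ell(\gamma)\bigr).
\]

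The final step is to absorb everything into a bound of the desired shape $\widetilde C\,\ell(\gamma)\,\delta$. The $\ell(\gamma)$-piece of the sum is already controlled by the geometric series $\sum_{k=0}^{n-1}\lambda^{-(n-k)}\le(\lambda-1)^{-1}$, while the two remaining terms, $2/(c\lambda^n)$ and $\sum_{k=0}^{n-1}a^k\lambda^{-(n-k)}$, both tend to $0$ as $n\to\infty$ (a direct geometric-series check covers the three cases $a\lambda<1$, $=1$, $>1$). Choosing $n_0=n_0(\gamma)$ large enough so that each of these decaying quantities is at most $\ell(\gamma)$ produces the required inequality with a constant $\widetilde C$ depending only on $\Phi$. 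The main obstacle is the careful bookkeeping in case (b): for each bad $y$ one must identify the correct intermediate index $k$ at which the forward orbit first enters $N^+$ and verify that $\Phi^{n-k}$ is a smooth diffeomorphism on the intermediate $h$-curve $\Phi^k(\gamma')$ with expansion $\ge c\lambda^{n-k}$, using that $\gamma'\subset\gamma\setminus N_n^+$ keeps all iterates $0\le j\le n-1$ away from the singular set. Once this is in place, the estimate is a clean application of Theorem~\ref{thm growth lemma}.
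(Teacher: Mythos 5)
Your proposal is correct and follows essentially the same route as the paper's proof: the same dichotomy between endpoints of $\gamma$ and intermediate singularities at a first entry time $k$, the same pull-back of a $\delta$-neighbourhood to $\Phi^{-k}(N^+_{\delta/(c\lambda^{n-k})})$ via the uniform horizontal expansion, the same term-by-term application of Theorem~\ref{thm growth lemma}, and the same geometric-series absorption with $n_0$ chosen so the $\ell(\gamma)$-free terms are dominated by $\ell(\gamma)$.
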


\begin{proof}
Let $ \gamma $ be an $ h $-curve. If $ x \in \gamma \setminus \gamma(n,\delta) $, then either $ x \in N^{+}_{n} $ or $ d(\Phi^{n}(x),\partial \Phi^{n}(\gamma \setminus N^{+}_{n}))<\delta $. Let $ B \subset \partial \Phi^{n}(\gamma \setminus N^{+}_{n}) $ be the image under $ \Phi^{n} $ of the endpoints of $ \gamma $ that do not belong to $ N^{+}_{n} $. Note that the elements of $ \partial \Phi^{n}(\gamma \setminus N^{+}_{n}) \setminus B $ are discontinuities of $ \Phi^{-n} $. Since the horizontal direction coincides with the unstable direction and $ \|D \Phi^{n}|_{E^{u}}\| \ge c \lambda^{n} $,
\[
\gamma \setminus \gamma(n,\delta) \subset \bigcup^{n-1}_{i=0} \left(\gamma \cap \Phi^{-i}\left(N^{+}_{\delta \lambda^{-n+i}/c} \right) \right) \cup \left(\gamma \cap \Phi^{-n}\left(B_{\delta}\right)\right).
\]
Using~\eqref{H4}, we obtain
\begin{align*}
\ell \left(\gamma \setminus \gamma(n,\delta) \right) & \le \sum^{n-1}_{i=0} \ell \left(\gamma \cap \Phi^{-i}\left(N^{+}_{\delta \lambda^{-n+i}/c} \right) \right) + \ell\left(\gamma \cap \Phi^{-n}\left(B_{\delta}\right)\right) \\ & \le \sum^{n-1}_{i=0} \frac{C \delta}{c \lambda^{n-i}} \left(a^{i} + \ell(\gamma) \right) + \frac{2\delta}{c \lambda^{n}} \\ & \le \frac{c \delta}{c \lambda^{n}} \sum^{n-1}_{i=0} \left( (a \lambda)^{i} + \lambda^{i} \ell(\gamma) \right) + \frac{2 \delta}{c \lambda^{n}} \\ & \le \frac{C \delta}{c \lambda^{n}} \left( \frac{1-(a \lambda)^{n}}{1 - a \lambda} + \frac{1 - \lambda^{n}}{1 - \lambda} \ell(\gamma) \right) + \frac{2 \delta}{c \lambda^{n}}   \\ & \le \frac{C \delta}{c} \left( \frac{\lambda^{-n}-a^{n}}{1 - a \lambda} + \frac{\lambda^{-n} - 1}{1 - \lambda} \ell(\gamma) + \frac{2}{C \lambda^{n}}  \right)  \\ & \le \frac{2C}{c} \ell(\gamma) \delta
\end{align*}
for all $ n $ such that the expression in parentheses in the penultimate inequality is smaller than $ 2 \ell(\gamma) $.
\end{proof}

Let $\hat D^+=\bigcup_{l\in\Nn}\hat D_l^+$.
 
\begin{lemma}
\label{lemma gamma D^+}
For any $h$-curve $\gamma$, we have $\ell (\gamma)=\ell (\gamma\cap \hat D^+)$.
\end{lemma}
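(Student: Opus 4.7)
The plan is to use the Borel--Cantelli lemma together with the growth lemma (Theorem~\ref{thm growth lemma}). I want to show that $\ell(\gamma\setminus\hat D^+)=0$, and I begin by splitting this set into $\gamma\setminus U^+$ and $(\gamma\cap U^+)\setminus\hat D^+$, where I write $U^+$ for the analog of $\mathcal{U}^+$ for $\Phi$. The first piece has $\ell$-measure zero: it equals $\bigcup_{n\ge 0}\gamma\cap N^+_n$, and by Remark~\ref{re:monotonicity} each $N^+_n$ is a finite union of strictly decreasing $C^2$ curves, so it meets the horizontal curve $\gamma$ in at most finitely many points, giving a countable set in total.

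For the second piece, I would rewrite membership in $\hat D^+_{\epsilon,l}$ as $\inf_{n\ge 0}e^{n\epsilon}\rho(\Phi^n(x),N^+)\ge 1/l$. Hence $\hat D^+=\bigcup_l\hat D^+_{\epsilon,l}$ consists of those $x\in U^+$ with $\inf_n e^{n\epsilon}\rho(\Phi^n(x),N^+)>0$. For $x\in\gamma\cap U^+$ the quantities $\rho(\Phi^n(x),N^+)$ are all strictly positive, so if $x\notin\hat D^+$ then for every $l\in\Nn$ there must be $n$ with $e^{n\epsilon}\rho(\Phi^n(x),N^+)<1/l$, and a short pigeonhole argument (if such $n$ were bounded by some $N$, taking $l>\max_{n\le N}(e^{n\epsilon}\rho(\Phi^n(x),N^+))^{-1}$ gives a contradiction) shows these $n$ must be unbounded in $l$. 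Consequently, setting
\[
B_{n,l}:=\gamma\cap\Phi^{-n}\bigl(N^+_{l^{-1}e^{-n\epsilon}}\bigr),
\]
one obtains the inclusion $(\gamma\cap U^+)\setminus\hat D^+\subseteq\bigcap_{l}\limsup_n B_{n,l}$.

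Next I apply Theorem~\ref{thm growth lemma}: for all $l$ large enough that $l^{-1}<\varepsilon_0$, and for all $n\ge 0$,
\[
\ell(B_{n,l})\le C\,l^{-1}e^{-n\epsilon}\bigl(a^n+\ell(\gamma)\bigr).
\]
Summing over $n$, the series $\sum_n\ell(B_{n,l})$ converges (geometric in $e^{-\epsilon}$ and in $ae^{-\epsilon}<1$), so by the classical Borel--Cantelli lemma $\ell(\limsup_n B_{n,l})=0$ for every such $l$. Intersecting over $l$ gives $\ell((\gamma\cap U^+)\setminus\hat D^+)=0$, and combining with the first step yields $\ell(\gamma\setminus\hat D^+)=0$, as required.

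The proof is essentially a direct application of the growth lemma, and no serious obstacle is expected; the only subtle point is the quantifier manipulation in the second step, where one must exploit that $\hat D^+$ is a union (rather than intersection) of the sets $\hat D^+_{\epsilon,l}$ to convert ``$x\notin\hat D^+$'' into an \emph{infinitely often} statement suitable for Borel--Cantelli.
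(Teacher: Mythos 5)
Your proof is correct and rests on exactly the same engine as the paper's: the growth lemma applied to the shrinking neighbourhoods $N^{+}_{l^{-1}e^{-n\epsilon}}$, followed by summation of the resulting geometric series in $n$. The paper simply bounds $\ell(\gamma\setminus\hat D^{+}_{l})\le\sum_{n\ge 0}\ell\bigl(\gamma\cap\Phi^{-n}(N^{+}_{l^{-1}e^{-n\epsilon}})\bigr)\le C l^{-1}(1+\ell(\gamma))/(1-e^{-\epsilon})\to 0$ and uses that $\hat D^{+}_{l}$ increases to $\hat D^{+}$ --- a direct tail estimate that also absorbs the points outside $U^{+}$ automatically --- whereas you repackage the same estimate through Borel--Cantelli and therefore need the (correct, but avoidable) extra steps of treating $\gamma\setminus U^{+}$ by countability and of the quantifier argument placing $(\gamma\cap U^{+})\setminus\hat D^{+}$ inside $\bigcap_{l}\limsup_{n}B_{n,l}$.
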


\begin{proof}
Since $ D^{+}_{l} $ is increasing in $ l $, we have $ \ell(\gamma \setminus \hat{D}^{+}_{l}) \to \ell(\gamma \setminus \hat{D}^{+}) $ as $ l \to +\infty $. Now,
\[
\gamma \setminus \hat{D}^{+}_{l} \subset \bigcup^{\infty}_{n=0} \left(\gamma \cap \Phi^{-n} \left(N^{+}_{l^{-1}e^{-\epsilon n}} \right) \right)
\]
If $ l $ is sufficiently large, then by~\eqref{H4},
\begin{align*}
	\ell(\gamma \setminus \hat{D}^{+}_{l}) & \le \sum^{\infty}_{n=0} \ell \left(\gamma \cap \Phi^{-n} \left(N^{+}_{l^{-1}e^{-\epsilon}} \right) \right) \\
	& \le \frac{C}{l} \sum^{\infty}_{n=0} e^{-\epsilon n} \left(a^{n} + \ell(\gamma)\right) \le  \frac{C}{l} \cdot \frac{1+\ell(\gamma)}{1-e^{-\epsilon}} \xrightarrow[l \to +\infty]{} 0.
\end{align*}
\end{proof}

Given an $h$-curve $\gamma$, we call {\it $\gamma$-limit measure} any weak-$\ast$ accumulation point of the averages
$$ \mu_{\gamma,n}:= \frac{1}{n}\,\sum_{j=0}^{n-1}\Phi_\ast^j\,\Leb_\gamma,$$
where $\Leb_\gamma$ denotes the normalized Lebesgue measure of $\gamma$, i.e. given any Borel set $A\subset M$,
$$
\Leb_\gamma(A)=\frac{\ell(\gamma\cap A)}{\ell(\gamma)}.
$$

The next is the key result on the existence of SRB measures proved in~\cite{pe}. We recall part of the proof, for the convenience of the reader.

\begin{lemma}
\label{lemma mu D^0}
If $ \gamma $ is an $h$-curve, then any $\gamma$-limit measure $\mu$ is invariant and satisfies $\mu(D^0)=1$. In addition, if $\gamma\cap D^-\not=\emptyset$, then $\mu$ is an SRB measure.
\end{lemma}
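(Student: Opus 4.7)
The plan is to follow Pesin's classical Krylov--Bogolyubov argument adapted to the piecewise smooth setting; the decisive analytical input is the uniform bound
\[ \mu_{\gamma,n}(N^+_\varepsilon) \;\le\; \frac{C\varepsilon}{\ell(\gamma)}\Bigl(\frac{1}{n(1-a)}+\ell(\gamma)\Bigr) \;\le\; C'\varepsilon, \]
obtained by averaging the estimate of Theorem~\ref{thm growth lemma} over $r=0,\ldots,n-1$ and using $\Leb_\gamma(\Phi^{-j}(N^+_\varepsilon))=\ell(\gamma\cap\Phi^{-j}(N^+_\varepsilon))/\ell(\gamma)$. Invariance of $\mu$ then follows from $\|\mu_{\gamma,n}-\Phi_\ast\mu_{\gamma,n}\|\le 2/n\to 0$; the only delicate point is that $\varphi\circ\Phi$ is discontinuous on $N^+$ for $\varphi\in C^0(M)$, but the Portmanteau theorem applied to the open sets $N^+_\varepsilon$ combined with the bound above yields $\mu(N^+_\varepsilon)\le C'\varepsilon$ for every $\varepsilon$, hence $\mu(\overline{N^+})=0$, so $\varphi\circ\Phi$ is $\mu$-almost everywhere continuous and the passage to the limit is legitimate.

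For $\mu(D^+)=1$, the forward invariance $\Phi(\hat D^+_l)\subset\hat D^+_{le^\varepsilon}$, immediate from the definition of $\hat D^+_l$, together with the bound $\Leb_\gamma(M\setminus\hat D^+_l)\le C_1/l$ obtained in the proof of Lemma~\ref{lemma gamma D^+}, implies $\mu_{\gamma,n}(M\setminus\hat D^+_l)\le C_1/l$ uniformly in $n$; since each $\hat D^+_l$ is closed, the Portmanteau theorem forces $\mu(\hat D^+_l)\ge 1-C_1/l$, hence $\mu(\hat D^+)=1$. That $\mu$ concentrates on $A=\overline D$ follows from the decreasing-intersection attractor property: for every open $U\supset A$ one has $\Phi^j(\gamma\setminus N^+_j)\subset U$ for all $j$ sufficiently large, so $\mu_{\gamma,n}(M\setminus U)\to 0$. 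Combining the two gives $\mu(D^+)=\mu(\hat D^+\cap A)=1$.

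The main obstacle is $\mu(D^-)=1$, since Theorem~\ref{thm growth lemma} only provides forward information. The plan is to convert this into a backward neighborhood bound on $\mu$ itself, namely $\mu(N^-_\varepsilon)\le C_2\varepsilon^\beta$ for some $\beta>0$; this I would establish by foliating $M\setminus\partial M$ by $h$-curves, using Fubini with Theorem~\ref{thm growth lemma} to control the Lebesgue measure of $\Phi^{-1}(N^-_\varepsilon)\subset N^+\cup\partial M$, and then transferring the estimate to $\mu$ via its invariance. Once the polynomial bound is in place, a Borel--Cantelli argument,
\[ \mu(M\setminus D^-_l) \;\le\; \sum_{n\ge 0}\mu\bigl(\Phi^n(N^-_{l^{-1}e^{-n\varepsilon}})\bigr) \;=\; \sum_{n\ge 0}\mu\bigl(N^-_{l^{-1}e^{-n\varepsilon}}\bigr)\;\xrightarrow[l\to\infty]{}\;0, \]
using that $\Phi$ is a measure-preserving bijection on a $\mu$-full-measure subset of $A$, delivers $\mu(D^-)=1$ and hence $\mu(D^0)=1$.

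For the SRB claim under the hypothesis $\gamma\cap D^-\ne\emptyset$, pick $x\in\gamma\cap D^-$. Proposition~\ref{prop:stable manifolds} supplies a local unstable manifold $W^u_{\mathrm{loc}}(x)$ of uniform size, which by Remark~\ref{re:horizontal} is an $h$-curve; two $h$-curves through $x$ coincide in a neighborhood of $x$, so a sub-arc of $\gamma$ around $x$ is a sub-arc of $W^u_{\mathrm{loc}}(x)$ and $\Leb_\gamma$ restricts there to a constant multiple of the unstable leaf measure. Because $\Phi$ restricted to any $h$-curve is affine in arclength (as exploited in the proof of Theorem~\ref{thm growth lemma}), each $\Phi^j_\ast\Leb_\gamma$ has locally constant density on the $h$-curves composing its support; the conditional measures of the weak-$*$ limit $\mu$ along local unstable manifolds are therefore limits of multiples of one-dimensional Lebesgue measures, hence absolutely continuous, which is the SRB property in the sense of Definition~\ref{de:srb}.
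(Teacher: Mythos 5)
Your handling of invariance and of $\mu(\hat D^+)=1$ follows the paper's route in substance, though one intermediate claim is off: from the definition one only gets $\Phi^j(\hat D^+_l)\subset \hat D^+_{le^{j\epsilon}}$, so $\Leb_\gamma(\Phi^{-j}(M\setminus \hat D^+_l))$ is bounded by $C_1 e^{j\epsilon}/l$ rather than $C_1/l$, and your asserted uniformity in $n$ fails; the paper instead observes that $\hat D^+$ itself is forward invariant and has full $\Leb_\gamma$-measure by Lemma~\ref{lemma gamma D^+}, so that $\mu_{\gamma,n}(\hat D^+)=1$ exactly. The genuine gap is the step $\mu(D^-)=1$. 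Your plan is to prove $\mu(N^-_\varepsilon)\le C_2\varepsilon^{\beta}$ by controlling the \emph{Lebesgue} measure of $\Phi^{-1}(N^-_\varepsilon)$ via Fubini and Theorem~\ref{thm growth lemma}, and then ``transferring the estimate to $\mu$ via its invariance''. That transfer is not legitimate: $\mu$ is a weak-$\ast$ limit of averages of measures carried by one-dimensional curves and is in general singular with respect to $\nu$, so a bound on $\nu(\Phi^{-1}(N^-_\varepsilon))$ gives no control whatsoever on $\mu(\Phi^{-1}(N^-_\varepsilon))=\mu(N^-_\varepsilon)$. To exploit invariance one would instead need a growth-lemma estimate for $\ell(\Gamma\cap\Phi^{-r}(\Phi^{-1}(N^-_\varepsilon)))$, and $\Phi^{-1}(N^-_\varepsilon)$ is \emph{not} contained in any $N^+_{C\varepsilon}$: near tangential singularities $\cos\bar{\theta}_{1}\to 0$ and the entries of $D\Phi^{-1}$ blow up, so there is no uniform comparison between $\varepsilon$-neighbourhoods of $N^-$ and preimages of neighbourhoods of $N^+$. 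Theorem~\ref{thm growth lemma} speaks only about $N^+$, and your Borel--Cantelli step therefore rests on an estimate you cannot produce by the method described.

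The paper circumvents this entirely with a softer argument for which you already possess every ingredient. The bound $\mu(N^+_\varepsilon)\le C'\varepsilon$ that you derived implies $\log\dist(\cdot,N^+)\in L^1(\mu)$ (integrate the tail $\mu(\varphi<e^{-t})\le C'e^{-t}$). Since $\mu$ is invariant, Birkhoff's ergodic theorem gives $\frac1n\log\varphi(\Phi^{-n}x)\to 0$ for $\mu$-a.e.\ $x$, hence $\varphi(\Phi^{-n}x)\ge l^{-1}e^{-n\epsilon}$ for all $n\ge 0$ and some $l=l(x)$; combined with the elementary inequality $\dist(\Phi^{-(n-1)}x,N^-)\ge\dist(\Phi^{-n}x,N^+)$ this places $x$ in $D^-_l$, so $\mu(D^-)=1$ with no measure estimate on neighbourhoods of $N^-$ required. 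Finally, your closing paragraph on absolute continuity of the conditional measures is only a heuristic (conditional measures do not pass to weak-$\ast$ limits without the distortion control of Pesin's construction), but since the paper itself settles that step by citing the proof of \cite[Theorem~1]{pe}, I would not count it as an additional gap.
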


\begin{proof}
Consider the function
$\varphi:M\setminus N^{+}\to\Rr$,
$$\varphi(x):=  \dist(x,N^{+}) \;,$$
where the distance between $x$ and $N^{+}$ is measured along the horizontal line through $x$. Since $\varphi$ is bounded from above,
for any probability measure $\nu$, the integral $\int \log\varphi\,d\nu$ is well-defined in $[-\infty,+\infty)$.
Notice also that for all sub-intervals $I$ of some fixed compact interval containing the origin,
$\int_I \log\abs{x}\,dx \geq -\len{I}$. Hence, by a change of coordinates
$$  \int_{\gamma\cap \Phi^{-j}(N^{+}_\epsilon)}
\log \dist(\Phi^j(x),N^{+})\, dx  \geq
-\len{\gamma\cap \Phi^{-j}(N^{+}_\epsilon)}\;, $$
and by ~\eqref{H4} there exists a constant $C>0$ such that  

\begin{align*} 
\int \log \varphi\,d\mu_{\gamma,n} 
&= \int_{N^{+}_\epsilon} \log \varphi\,d\mu_{\gamma,n}  +
\int_{M\setminus N^{+}_\epsilon} \log \varphi\,d\mu_{\gamma,n}  \\
 &\geq  - \frac{1}{n}\,\sum_{j=0}^{n-1}
\frac{C\,\epsilon\,(a^j +\len{\gamma})}{\len{\gamma}} 
 - \log(1/\epsilon) \\
  &\geq  - C\,\epsilon - \frac{C\,\epsilon}{n \,\len{\gamma}\,(1-\eta)} 
 - \log(1/\epsilon) >-\infty \;.
\end{align*}
Because this lower bound is uniform in $n$,
the function $\log \varphi$ is $\mu$-integrable, with 
$\int \log\varphi\,d\mu \geq - C\,\epsilon - \log(1/\epsilon) $.
Similarly, $\log \varphi\circ \Phi^k$ is $\mu$-integrable for every $k\in\Zz$.
Hence, $\mu(\Phi(N^{+}))=0$.

To prove that $\mu$ is invariant it is enough to see, for any continuous function $\psi:M\to\Rr$, that
\begin{equation}
\label{mu invariance rel}
\int \psi \, d\mu = \int \psi\circ \Phi\, d\mu  \;. 
\end{equation}  
If  $\psi=0$ on $\Phi(N^{+})$  then the composition $\psi\circ\Phi$ is also continuous on $M$, and hence 
$$ \lim_{k\to+\infty} \int \psi\circ \Phi\, d\mu_{\gamma,n_k} = \int \psi\circ \Phi\, d\mu \;. $$
On the other hand,  a standard calculation gives
\begin{align*}
\abs{ \int \psi \, d\mu_{\gamma,n} - \int \psi\circ \Phi\, d\mu_{\gamma,n} }\leq 
 \frac{2}{n}\,\norm{\psi}_\infty.
\end{align*}
Passing to the limit we get~\eqref{mu invariance rel}.

In general, given any continuous function $\psi:M\to\Rr$, because $\mu(\Phi(N^{+}))=0$, we can approximate $\psi$ by continuous functions
$\psi'$ vanishing on $\Phi(N^{+})$, and taking the limit we obtain the
invariance relation~\eqref{mu invariance rel} for $\psi$.
This proves that $\mu$ is invariant. 

By Lemma~\ref{lemma gamma D^+},
$\Leb_\gamma(\hat D^+)=1$. Hence because $\Phi(\hat D^+)\subseteq \hat D^+$, we have  $\mu_{\gamma,n}(\hat D^+)=1$
for all $n\geq 1$. Thus, taking the limit, $\mu(\hat D^+)=1$.

We are now left to prove that $\mu(D^-)=1$. From Birkhoff's ergodic theorem,  for $\mu$-almost every $x\in M$,
$ \lim_{n\to +\infty} \frac{1}{n}\,\log \varphi(\Phi^{-n} x)=0$.
This implies that there exists $l\in\Nn$ such that $\varphi(\Phi^{-n} x)\geq l^{-1}\,e^{-n\,\epsilon}$ for all $n\in\Nn$. Hence,
$$ \dist(\Phi^{-(n-1)} x, N^-)\geq 
\dist(\Phi^{-n} x, N^{+}) = \varphi(\Phi^{-n} x)\geq l^{-1}\,e^{-n\,\epsilon}\;, $$
which proves that $x\in D^-_l$ for some $l\in\Nn$. Thus
 $\mu(D^-)= 1$.  The last claim follows  from the proof of existence of SRB measures in~\cite[Theorem 1]{pe}.
\end{proof}

In the next lemma, we prove that the stable holonomy is Lipschitz continuous (c.f.~\cite[Proposition 10]{pe}).

\begin{lemma} \label{Lipschitz holomomy}
Given $l\in\Nn$, there are constants $\tilde\delta>0$ and $C>0$ such that
if $\Gamma$ and $\Gamma'$ are $ h $-curves whose distance is less than $\tilde{\delta}$, $x_1,x_2\in\Gamma\cap \hat D^+_{l} $ with 
$\abs{x_1-x_2}<\tilde{\delta}$ and $x_i':= \Gamma'\cap W^s_{l}(x_i)$, then
$$ \frac{\abs{x_1'-x_2'}}{\abs{x_1-x_2}}\leq C.$$
\end{lemma}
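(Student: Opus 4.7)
The plan is to exploit the specific structure of polygonal billiards, where by Remark~\ref{re:horizontal} the unstable direction is exactly horizontal, so both $\Gamma$ and $\Gamma'$ are arcs of the unstable foliation and the holonomy $h\colon\Gamma\to\Gamma'$, $h(x)=W^s_{l}(x)\cap\Gamma'$, is well defined on a neighbourhood of $x_1$ in $\Gamma$ once $\tilde\delta<\delta_l/2$ (the uniform size of stable manifolds provided by Proposition~\ref{prop:stable manifolds}). Let $\gamma\subset\Gamma$ and $\gamma'\subset\Gamma'$ be the arcs joining $x_1,x_2$ and $x_1',x_2'$. Since both curves are horizontal, $|x_1-x_2|$ and $|x_1'-x_2'|$ agree with $\ell(\gamma)$ and $\ell(\gamma')$ up to a uniform constant, so it suffices to produce a uniform bound on $\ell(\gamma')/\ell(\gamma)$.

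Next, I iterate forward. By~\eqref{eq:derivative}, the horizontal direction is invariant under $D\Phi$, so $\gamma_n:=\Phi^n(\gamma)$ and $\gamma_n':=\Phi^n(\gamma')$ remain $h$-curves so long as $\gamma,\gamma'$ do not hit $N^+_n$. Using that $x_1,x_2\in\hat D^+_l$ stay $l^{-1}e^{-\epsilon n}$ away from $N^+$ and that $\gamma$ is short, I can pick $n=n(\ell(\gamma))$ as large as roughly $|\log\ell(\gamma)|$ while keeping $\gamma$ and $\gamma'$ disjoint from $N^+_n$; this is the standard bootstrap that also underlies the proof of Lemma~\ref{lemma size delta cc}. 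Along each $h$-curve, the mean value theorem gives
\[
\ell(\gamma_n)=\alpha_n(\xi)\,\ell(\gamma),\qquad \ell(\gamma_n')=\alpha_n(\xi')\,\ell(\gamma'),
\]
for some $\xi\in\gamma$, $\xi'\in\gamma'$, with $\alpha_n$ as in~\eqref{eq:expansion}. Condition~A2 yields $\alpha_n\ge c\lambda^n$, while the stable manifold contraction gives $|\Phi^n x_i-\Phi^n x_i'|\le c^{-1}\lambda^{-n}\delta_l$. Combining these with the triangle inequality along the horizontal direction,
\[
\alpha_n(\xi')\,\ell(\gamma')\ \le\ \alpha_n(\xi)\,\ell(\gamma)\ +\ 2c^{-1}\lambda^{-n}\delta_l.
\]

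The heart of the argument is a bounded-distortion estimate for the expansion factor $\alpha(s,\theta)=\cos\theta/\cos\bar\theta_1(s,\theta)$ along stable manifolds. Since $\log\alpha$ is smooth on each smoothness component of $\Phi$, and since for $x\in\hat D^+_l$ and $x'\in W^s_l(x)$ the pair $(\Phi^k x,\Phi^k x')$ lies in a region of size $c^{-1}\lambda^{-k}\delta_l$ bounded away from $N^+$ by at least $l^{-1}e^{-\epsilon k}$, the summable bound $\sum_k|\log\alpha(\Phi^k x)-\log\alpha(\Phi^k x')|\le K\sum_k\lambda^{-k}\cdot l\,e^{\epsilon k}$ (which converges provided $\epsilon$ was chosen small enough, $e^\epsilon<\lambda$) gives a constant $C_0=C_0(l)$ such that
\[
\bigl|\log\alpha_n(x)-\log\alpha_n(x')\bigr|\le C_0\qquad\text{for all }n\ge0.
\]
Combining this with a routine distortion estimate along the short $h$-curves $\gamma$ and $\gamma'$ yields $\alpha_n(\xi)/\alpha_n(\xi')\le C_1$ uniformly in $n$.

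Finally, I conclude by choosing $n$ as above. Substituting the distortion bound into the triangle estimate,
\[
\frac{\ell(\gamma')}{\ell(\gamma)}\ \le\ \frac{\alpha_n(\xi)}{\alpha_n(\xi')}\ +\ \frac{2c^{-1}\lambda^{-n}\delta_l}{\alpha_n(\xi')\,\ell(\gamma)}\ \le\ C_1\ +\ \frac{2c^{-2}\lambda^{-2n}\delta_l}{\ell(\gamma)}.
\]
With the choice of $n$ that makes $\lambda^{-n}$ comparable to $\ell(\gamma)^{1/2}$ (still permissible by the bootstrap above), the second term is uniformly bounded, producing a constant $C=C(l)$ with $\ell(\gamma')\le C\,\ell(\gamma)$, which is the claim. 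The main obstacle is the distortion bound along stable manifolds: it is where the hypothesis $x_1\in\hat D^+_l$ is essential, because the non-closeness of forward orbits to $N^+$ is exactly what makes the telescoping series for $\log\alpha_n(x)-\log\alpha_n(x')$ summable.
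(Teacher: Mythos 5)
Your overall strategy (iterate forward, control the distortion of $\alpha_n$ along stable manifolds using the $\hat D^+_l$ condition, and observe that the stable displacement $|\Phi^n x_i-\Phi^n x_i'|$ is negligible at time $n$) is essentially the same as the paper's, and your distortion estimate in step three is a correct rendering of the paper's step (b) (the paper even gets the distortion along the $h$-curves themselves for free, since $\Phi^n$ restricted to an $h$-curve is piecewise affine for a polygonal table). The gap is in the choice of $n$. You need simultaneously that $\gamma$ and $\gamma'$ avoid $N^+_n$ (so that the mean value identity $\ell(\gamma_n)=\alpha_n(\xi)\ell(\gamma)$ refers to a single uncut curve) and that $\lambda^{-2n}\lesssim \ell(\gamma)$ so that the error term $2c^{-2}\lambda^{-2n}\delta_l/\ell(\gamma)$ is bounded. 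The "bootstrap" you invoke to make both hold is not proved and does not hold in general: the first time $\gamma$ is cut by $N^+_n$ is governed by the \emph{maximal} expansion along the orbit (which is unbounded, since $\alpha=\cos\theta/\cos\bar\theta_1$ blows up near grazing collisions) and by the geometry of the singular set, whereas the requirement $\lambda^{-2n}\lesssim\ell(\gamma)$ forces $n\gtrsim |\log\ell(\gamma)|/(2\log\lambda)$ with $\lambda$ the \emph{minimal} expansion rate. There is no inequality between these two time scales, so the admissible window for $n$ may be empty.

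The paper's resolution is the idea your sketch is missing: take $n$ to be the \emph{first separation time}, i.e.\ the first iterate at which $x_{1,n}$ and $x_{2,n}$ lie in different branch domains of $\Phi$. At that moment a singular curve passes between them, and since $x_1,x_2\in\hat D^+_l$ each of $x_{1,n},x_{2,n}$ is at distance at least $l^{-1}e^{-n\epsilon}$ from $N^+$; hence $|x_{1,n}-x_{2,n}|\gtrsim l^{-1}e^{-n\epsilon}$, i.e.\ one gets a \emph{lower} bound on $\ell(\gamma_n)=\alpha_n(\xi)\ell(\gamma)$ directly, with no reference to $\ell(\gamma)$. Against this, the stable displacement at time $n$ is of order $\lambda(f)^n\delta_l$, which is $\ll l^{-1}e^{-n\epsilon}$ once $\epsilon<\log(1/\lambda(f))$, so the ratio $|x_{1,n}'-x_{2,n}'|/|x_{1,n}-x_{2,n}|$ is bounded at time $n$ and one pulls back with the distortion estimate. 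If you replace your step of "choosing $n$ with $\lambda^{-n}\sim\ell(\gamma)^{1/2}$" by this stopping-time argument (and bound the error term by $\lambda(f)^n\delta_l/\ell(\gamma_n)$ rather than by $\lambda^{-2n}/\ell(\gamma)$), your proof closes.
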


\begin{proof}
The proof consists of a few steps.

\smallskip

\noindent
(a)\;  First the slope of local stable manifolds is uniformly bounded away from $0$. This follows easily from the expression for tangent space to the stable manifold in the proof of ~\cite[Proposition 3.1]{MDDGP13}.

\smallskip

\noindent
(b)\;  Let $\Gamma_n(x_1)$ denote the $\Phi^n$
pre-image of the connected component of
$\Phi^n(\Gamma)$ that contains $\Phi^n(x_1)$. 
Let $\Gamma_n'(x_1)$ be the corresponding component of $\Gamma'$ w.r.t. $x_1'$.
Denoting  $\alpha_n = (\Phi^n\vert_{\Gamma_n(x_1)})'$, resp. $\alpha_n'=(\Phi^n\vert_{\Gamma_n'(x_1')})'$, then
$$ \abs{ \log  \frac{\alpha_n}{\alpha_n'}  } 
\leq C\,\delta\;.$$

Notice that $\alpha_n(s_0,\theta_0)
=\prod_{j=0}^{n-1} \rho(\theta_i)$, where
$\rho(\theta):=cos f(\theta)/\cos \theta$
and for each $i\geq 1$, $(s_i,\theta_i)$ is the image of
$(s_{i-1}, f(\theta_{i-1}))$ by the specular billiard.

Given another point $(s_0',\theta_0')\in W^s_{{\rm loc}}(s_0,\theta_0)$, denote by $\theta_i'$ the analogous angles for $(s_0',\theta_0')$. Since $\log \rho(\theta)$ is Lispchitz, and $\abs{\theta'_j-\theta_j}$ decay geometrically with $j$, we obtain
\begin{align*}
\abs{\log \frac{\alpha_n(s_0, \theta_0)}{\alpha_n(s_0', \theta_0')}} &\leq\sum_{j=0}^{n-1}
\abs{\log \rho(\theta_i)-\log \rho(\theta_i')}\\
& \leq  \sum_{j=0}^{n-1} C\,\abs{\theta_i-\theta_i}
\lesssim C\,\abs{\theta_0-\theta_0'}
=C\,\delta\;. 
\end{align*}

\smallskip

\noindent
(c)\;  Write $x_{1,n}=\Phi^n(x_1)$,
$x_{2,n}=\Phi^n(x_2)$, and consider the first $n\geq 1$ such that $x_{1,n}$ and $x_{2,n}$ do not belong to the same branch domain of $\Phi$.
Because $x_i\in \hat D^+_{l}$, and these two points are separated by singular curve, we have
$\abs{x_{1,n}-x_{2,n}}\leq 2\,l^{-1}\,e^{-n\,\epsilon}$.
Writing $x_{1,n}'=\Phi^n(x_1')$ and $x_{2,n}'=\Phi^n(x_2')$, we have
$\abs{ x_{1,n} - x_{1,n}' } = 1/\alpha_n(x_1)\ll l^{-1}\,e^{-n\,\epsilon}$. Hence, combining this information with (a), we get  
$$ \frac{\abs{x_{1,n}'-x_{2,n}'}}{\abs{x_{1,n}-x_{2,n}}} \leq C\;. $$

\smallskip

\noindent
(d)\; By parts (b) and (c) and the mean value theorem, we obtain
\begin{align*}
\frac{\abs{x_{1}'-x_{2}'}}{\abs{x_{1}-x_{2}}} &\asymp \abs{\frac{\alpha_n}{\alpha_n'}}\,\frac{\abs{x_{1,n}'-x_{2,n}'}}{\abs{x_{1,n}-x_{2,n}}} \\
&\leq  e^{C\,\delta} \frac{\abs{x_{1,n}'-x_{2,n}'}}{\abs{x_{1,n}-x_{2,n}}} \leq C\,e^{C\,\delta}\;. 
\end{align*}

\end{proof}

By Proposition~\ref{prop:stable manifolds}, for every $l\in\Nn$, there exists $\delta_l>0$ such that any point $x\in \hat D^+_l$ (resp. $x\in D^-_l$)  has a local stable (resp. unstable) curve of size $\delta_l$ around $x$, denoted by $W^s_{l}(x)$ (resp. $W^u_{l}(x)$). For any set $A\subset D^0_l$, define $W^s_l(A) =\bigcup_{x\in A} W^s_{l}(x)$.

\begin{lemma}
\label{lemma gamma cap B mu >0 }
Let $\gamma$ be an $h$-curve such that $\gamma\cap D^-\not=\emptyset$. If $\mu$ is a $\gamma$-limit measure, then there exists an ergodic component $\mu_c$ of $\mu$ such that $\Leb_\gamma\left(B(\mu_c)\right)>0$.
\end{lemma}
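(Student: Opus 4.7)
The plan is to show $\Leb_\gamma(B(\mu_c))>0$ for a well-chosen ergodic component, by building a Pesin rectangle inside its basin, using weak-$\ast$ convergence to force iterates of $\gamma$ to cut it, and pulling back through $\Phi^j$. Since $\gamma\cap D^-\ne\emptyset$, Lemma~\ref{lemma mu D^0} yields that $\mu$ is SRB, and Theorem~\ref{th:sataev}(1) gives a finite decomposition $\mu=\sum_{c=1}^m\alpha_c\mu_c$ into ergodic SRB measures; I would fix any $c$ with $\alpha_c>0$. Then I would fix $l$ large enough that $\mu_c(D^0_l)>0$ and, using that conditional measures of $\mu_c$ on local unstable manifolds are absolutely continuous and that these unstable manifolds are $h$-curves (Remark~\ref{re:horizontal}), select $x_0\in D^0_l\cap\supp(\mu_c)$ for which $1$D Lebesgue almost every point of $W^u_l(x_0)$ lies in $B(\mu_c)$. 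Finally I would fix an open neighborhood $U$ of $x_0$ of diameter smaller than $\min(\delta_l,\tilde\delta)/2$, where $\tilde\delta$ comes from Lemma~\ref{Lipschitz holomomy}, so that $\mu(U)\geq\alpha_c\mu_c(U)>0$ and the Lipschitz stable holonomy onto $W^u_l(x_0)$ is well defined from any $h$-curve passing through $U$.

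The core geometric claim is: there exists $\kappa>0$ such that every $h$-curve $\Gamma$ of length at least $\delta_l$ passing through $U$ satisfies $\Leb_\Gamma(\Gamma\cap B(\mu_c))\geq\kappa$. Indeed, Lemma~\ref{Lipschitz holomomy} (applied in both directions by swapping the roles of $\Gamma$ and $W^u_l(x_0)$) gives a bi-Lipschitz stable holonomy between $\Gamma\cap\hat D^+_l$ and its image in $W^u_l(x_0)$; the full-measure set $B(\mu_c)\cap W^u_l(x_0)$ therefore pulls back to a positive $\Leb_\Gamma$-measure subset of $\Gamma$, and this preimage lies in $B(\mu_c)$ because basins are saturated by local stable manifolds (nearby forward orbits share the Birkhoff averages of every continuous observable).

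To invoke the claim, I would combine the weak-$\ast$ convergence $\mu_{\gamma,n}\to\mu$ with Lemma~\ref{lemma size delta cc}. Portmanteau gives
\[
\sum_{j=0}^{n-1}\Leb_\gamma(\gamma\cap\Phi^{-j}(U))=n\,\mu_{\gamma,n}(U)\geq\tfrac{n\,\mu(U)}{2}
\]
for all large $n$. Picking $\delta<\min\{\delta_l/2,\mu(U)/(4\widetilde{C}\,\ell(\gamma))\}$ and subtracting the short-image exceptional set $\gamma\setminus\gamma(j,\delta)$ (bounded via Lemma~\ref{lemma size delta cc}) from this sum, one extracts some $j$ large enough that $E:=\gamma\cap\Phi^{-j}(U)\cap\gamma(j,\delta)$ has positive $\Leb_\gamma$-measure. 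For each $y\in E$, the connected component $\Gamma_y$ of $\Phi^j(\gamma)\setminus N^+_j$ through $\Phi^j(y)\in U$ is an $h$-curve of length at least $2\delta$, so the claim yields $\Leb_{\Gamma_y}(\Gamma_y\cap B(\mu_c))\geq\kappa$. Pulling back via the $C^2$-diffeomorphism $\Phi^{-j}$ on the relevant branch of $\gamma$, and using that $B(\mu_c)$ is backward-invariant (direct from the Birkhoff-average definition), one concludes $\Leb_\gamma(\gamma\cap B(\mu_c))>0$.

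The hard part is the geometric claim, which reconciles the 2D mass $\mu(U)>0$ with 1D arclength control on $\gamma$. The bridge is supplied by two results already at hand: Lemma~\ref{lemma size delta cc}, ensuring that images of $\gamma$ eventually carry $h$-curves of definite size hitting $U$, and Lemma~\ref{Lipschitz holomomy}, whose (bi-)Lipschitz holonomy transfers the full $1$D density on $W^u_l(x_0)\cap B(\mu_c)$ into positive density along each such $h$-curve.
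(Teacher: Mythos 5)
Your overall strategy matches the paper's: decompose the SRB measure $\mu$ (obtained from Lemma~\ref{lemma mu D^0}) into finitely many ergodic components, locate a region where local unstable manifolds are mostly filled by the union of basins, use the weak-$\ast$ convergence $\mu_{\gamma,n}\to\mu$ together with Lemma~\ref{lemma size delta cc} to force a component of $\Phi^{j}(\gamma)$ of definite length into that region, transfer positive measure along the stable holonomy, and pull back by the affine branch of $\Phi^{-j}$. The bookkeeping in your last two paragraphs (Portmanteau, extraction of $j$, backward invariance of the basin, affinity of $\Phi^{-j}$ on components) is sound.

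The gap is in the ``core geometric claim''. Knowing that Lebesgue-a.e.\ point of $W^u_l(x_0)$ lies in $B(\mu_c)$ (which itself already requires the conditional densities of $\mu_c$ to be a.e.\ \emph{positive}, not merely that the conditionals are absolutely continuous) is not enough to run the holonomy argument onto an arbitrary $h$-curve $\Gamma$ through $U$. The stable holonomy from $W^u_l(x_0)$ to $\Gamma$ is defined only at points $w$ whose local stable manifold is long enough to reach $\Gamma$, i.e.\ at points of $\hat D^+_{l'}$ for some \emph{fixed} $l'$ with $\delta_{l'}$ exceeding the distance from $\Gamma$ to $W^u_l(x_0)$; a full-Lebesgue-measure subset of $W^u_l(x_0)$ is only guaranteed to lie in $\bigcup_{l'}\hat D^+_{l'}$ (Lemma~\ref{lemma gamma D^+}), with no control on $l'$. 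Moreover, $\Gamma$ overlaps only a sub-interval of $W^u_l(x_0)$ of length comparable to $\ell(\Gamma)$ --- in your application $2\delta\ll\delta_l$, so your claim is even invoked outside the regime in which you stated it --- and you need the set of ``good'' points (points of $B(\mu_c)$ carrying stable manifolds of a uniform size) to have gaps smaller than $\ell(\Gamma)$ in \emph{every} such sub-interval, uniformly over the curves $\Gamma$ you will encounter; otherwise $\Gamma$ can thread through a gap and meet no stable leaf of a good point, and the uniform $\kappa$ you assert does not exist. Your remark that Lemma~\ref{Lipschitz holomomy} can be ``applied in both directions'' is the right idea for converting positive measure on $W^u_l(x_0)$ into positive measure on $\Gamma$, but it too presupposes this uniformity. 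The paper's proof supplies exactly the two missing ingredients: it works with the sets $\Omega^0_l$ of points $x\in D^0_l$ satisfying $\Leb_{W^u_l(x)}(D^0_l\cap B)>1-\delta/10$, whose union has full $\mu$-measure by the SRB property (\cite[Proposition~9]{pe}), and it replaces your round ball $U$ by the adapted neighbourhoods $\Pi_l(x)$ built from short $h$-curves centered on $W^s_l(x)$, which guarantee that the curve $R^n_\gamma(y)$ is anchored on $W^s_l(x)$ and must therefore cross a positive-measure family of uniform-size stable manifolds of points of $D^0_l\cap B$. Your argument can be repaired, but the repair essentially amounts to reintroducing these two constructions.
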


\begin{proof}
By Lemma~\ref{lemma mu D^0}, $\mu$ is an SRB measure of $ \Phi $.  According to Theorem~\ref{th:main}, we may decompose $\mu$ into a finite number of ergodic components $\mu_i$, $i=1,\ldots, r$ such that $\mu$ is a convex combination of the ergodic measures $\mu_i$, i.e. $\mu=\sum_i\alpha_i\mu_i$. If $B:=\bigcup_{i=1}^r B(\mu_i)$, then
\begin{equation}\label{eq:mu B}
\mu\left(B\right)=1.
\end{equation}

Given $l\in\Nn$ define
$$  \Omega_{l}^0 := \left\{  x\in D^0_l \colon \Leb_{W_l^u(x)}\left( D^0_l \cap B\right) >1-\frac{\delta}{10}  \right\} 
$$
where 
$ \delta = \delta(l) $ is smaller than $ \tilde{\delta} $ as in Lemma~\ref{Lipschitz holomomy} and to be determined later.
Notice that $\mu(D^0\cap B)=1$ due to Lemma~\ref{lemma mu D^0} and \eqref{eq:mu B}. 
By the properties of an SRB measure (see~\cite[Proposition 9]{pe})
, we obtain 
$$
\mu\left(\bigcup_{l\in\Nn} \Omega_{l}^0\right)=1.
$$

Let $B(x,\delta)$ be the open ball of size $\delta$ around $x\in M$. For every $x\in \Omega_{l}^0$, denote by $\Pi_{l}(x)$ the union of all $h$-curves  of size $\delta/10$ centered at $y\in W_l^s(x)\cap B(x,\delta)$. 

The set 
$$
U_{l}:=\bigcup_{x\in \Omega^0_{l}}\Pi_{l}(x)
$$
is open and contains $\Omega_{l}^0$.
Choose $l\in\Nn$ sufficiently large such that
 $$
 \mu\left(\Omega_{l}^0\right)>\frac{9}{10}.
 $$

Since $\mu$ is a $\gamma$-limit and $U_{l}$ is open, we have
$$
\liminf_n \mu_{\gamma,n}(U_{l})\geq \mu(U_{l})\geq \mu(\Omega_{l}^0)>\frac{9}{10}.
$$
Hence, for infinitely many $n\in\Nn$,
\begin{equation}\label{eq:Lebgamma}
 \Leb_\gamma( \Phi^{-n}(U_{l}))) 
 > \frac{9}{10}.
\end{equation}
Notice that $\delta(l)\to 0$ as $l\to\infty$. According to Lemma~\ref{lemma size delta cc}, for any $l\in\Nn$ sufficiently large there exists $n_0\in\Nn$ such that for every $n\geq n_0$,
$$
\Leb_\gamma\left(\gamma(n,\delta)\right) > \frac{9}{10}.
$$
This lower bound together with \eqref{eq:Lebgamma} implies that there exist $l\in\Nn$ and $n\in\Nn$ such that
$$
\Leb_\gamma\left(\gamma(n,\delta)\cap \Phi^{-n}(U_{l})\right) > \frac{4}{5}.
$$
Therefore,  $\Phi^n(y)\in \Pi_{l}(x)$ for some $x\in \Omega_{l}^0$ and $y\in\gamma(n,\delta)$.

Recall that $R_\gamma^n(y)$ is the $h$-curve of size $\delta$ around $\Phi^n(y)$. Hence, we can find $z\in\gamma$ such that $\Phi^n(z)\in W_l^s(x)$ and an $h$-curve contained in $R_\gamma^n(y)$ of size $\frac{9}{10}\delta$ around $\Phi^n(z)$. Moreover, since $x\in \Omega_{l}^0$ we know that,
$$
\Leb_{W_l^u(x)}\left( D^0_l \cap B \right) >1-\frac{\delta}{10}.
$$
This means that $W_l^s( W_l^u(x)\cap D^0_l \cap B)$ has no `vertical gaps' having size larger than $\delta/10$. 
The local stable manifolds of points in $ D^{0}_{l} $ vary continuously. Thus, by choosing a sufficiently small $ \delta(l) $ and using the absolute continuity of the stable holonomy, we can make sure
that $R_\gamma^n(y)$ intersects $W_l^s(W_l^u(x)\cap D^0_l \cap B)$ on a set of positive $\ell$-measure. 
Notice that if $ z \in B \cap D^{0}_{l} $, then $ W^{s}_{l}(z) \subset B $. Hence, $\Leb_{R_\gamma^n(y)} \left(B\right)>0 $. This implies $\Leb_\gamma\left(B(\mu_i)\right)>0$ for some $i=1,\ldots,r$, because the restriction of $\Phi^{-n}$ to $R^n_\gamma(y)$ is affine.

\end{proof}

\begin{lemma}\label{len gamma cap B(mu) >0}
If $\gamma$ is an $h$-curve, then there exists an ergodic SRB measure $\mu_c$ such that $\Leb_\gamma\left(B(\mu_c)\right)>0$.
\end{lemma}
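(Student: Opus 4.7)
The plan is to bootstrap from Lemma~\ref{lemma gamma cap B mu >0 } via a stable-holonomy transfer. The key idea is to apply Lemma~\ref{lemma gamma cap B mu >0 } not to $\gamma$ but to a local unstable manifold $W^u_{l_0}(x_0)$ sitting inside $D^-$, obtaining an ergodic SRB measure $\mu_c$ and a positive $\ell$-measure subset $E$ of $W^u_{l_0}(x_0)\cap B(\mu_c)$. I would then transport $E$ onto an $h$-curve of a forward iterate $\Phi^n(\gamma)$ via the absolutely continuous stable holonomy of Lemma~\ref{Lipschitz holomomy}, and finally pull it back to $\gamma$ using the smoothness of $\Phi^n$ on branches together with the $\Phi$-invariance of $B(\mu_c)$.

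First I would pick any $\gamma$-limit measure $\mu$; by Lemma~\ref{lemma mu D^0}, $\mu$ is $\Phi$-invariant with $\mu(D^-)=1$, so $\mu(D^-_{l_0})>0$ for some $l_0$. Select $x_0\in\supp(\mu|_{D^-_{l_0}})$, ensuring every open neighborhood of $x_0$ has positive $\mu$-measure. Then $W^u_{l_0}(x_0)$ is an $h$-curve through $x_0$ of uniform size $\delta_{l_0}$. Since points $y\in W^u_{l_0}(x_0)$ satisfy $\rho(\Phi^{-n}y,\Phi^{-n}x_0)\to 0$ exponentially, a direct estimate using $x_0\in D^-_{l_0}$ shows that for some $l_1\geq l_0$, $W^u_{l_0}(x_0)\subset D^-_{l_1}\subset D^-$. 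Lemma~\ref{lemma gamma cap B mu >0 } applies to $W^u_{l_0}(x_0)$ and yields an ergodic SRB measure $\mu_c$ with $\Leb_{W^u_{l_0}(x_0)}(B(\mu_c))>0$. Applying Lemma~\ref{lemma gamma D^+} to $W^u_{l_0}(x_0)$ and possibly enlarging $l_1$, I obtain a subset $E\subset W^u_{l_0}(x_0)\cap B(\mu_c)\cap\hat{D}^+_{l_1}$ of positive $\Leb_{W^u_{l_0}(x_0)}$-measure.

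Next I fix small parameters $0<\eta<\tilde\delta$ (with $\tilde\delta$ from Lemma~\ref{Lipschitz holomomy} for the parameter $l_1$) and $0<\delta<\delta_{l_0}$, and consider the open $\eta$-ball $V$ around $x_0$; since $x_0\in\supp\mu$, $\mu(V)>0$. Weak-$\ast$ convergence $\mu_{\gamma,n_k}\to\mu$ gives $\liminf_k\mu_{\gamma,n_k}(V)\geq\mu(V)>0$, so for $k$ sufficiently large a pigeonhole argument (discarding the finitely many indices below the $n_0(\gamma)$ of Lemma~\ref{lemma size delta cc}) produces $n\geq n_0(\gamma)$ with $\Leb_\gamma(\Phi^{-n}(V))>\mu(V)/4$. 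Combining with Lemma~\ref{lemma size delta cc}, the intersection $\gamma(n,\delta)\cap\Phi^{-n}(V)$ has positive $\Leb_\gamma$-measure, and I pick $y$ in it. The $h$-curve $R_\gamma^n(y)$ then has size $\delta$ around $\Phi^n(y)\in V$ and thus lies at distance at most $\eta<\tilde\delta$ from $W^u_{l_0}(x_0)$. Lemma~\ref{Lipschitz holomomy} provides a bi-Lipschitz stable holonomy $\pi\colon W^u_{l_0}(x_0)\cap\hat{D}^+_{l_1}\to R_\gamma^n(y)$; since $B(\mu_c)$ is saturated by local stable manifolds, $\pi(E)\subset B(\mu_c)\cap R_\gamma^n(y)$, and bi-Lipschitz regularity of $\pi$ yields $\Leb_{R_\gamma^n(y)}(\pi(E))>0$.

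Finally, set $\widetilde\gamma:=\gamma\cap\Phi^{-n}(R_\gamma^n(y))\subset\gamma$. The restriction $\Phi^n|_{\widetilde\gamma}\colon\widetilde\gamma\to R_\gamma^n(y)$ is a smooth diffeomorphism, and $\Phi$-invariance $\Phi^{-n}(B(\mu_c))=B(\mu_c)$ forces $B(\mu_c)\cap\widetilde\gamma$ to have positive length, hence $\Leb_\gamma(B(\mu_c))>0$. The hardest step will be the quantitative coordination in the third paragraph: choosing $\eta$, $\delta$, and $n$ so that the positive-measure information produced by the weak-$\ast$ convergence is not consumed by the growth-lemma restriction to $\gamma(n,\delta)$, and verifying that the hypotheses of Lemma~\ref{Lipschitz holomomy} are actually met for the concrete pair $(W^u_{l_0}(x_0),R_\gamma^n(y))$ under the chosen proximity $\eta$.
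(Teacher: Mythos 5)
Your proposal is essentially the paper's own argument: pick a point of $\supp(\mu)\cap D^-$ for a $\gamma$-limit measure $\mu$, apply Lemma~\ref{lemma gamma cap B mu >0 } to an unstable $h$-curve through that point, and transfer the resulting positive basin measure back to $\gamma$ via the Lipschitz stable holonomy of Lemma~\ref{Lipschitz holomomy} together with the affineness of $\Phi^{n}$ along $h$-curves; the paper merely organizes the approximation of that unstable curve by long forward iterates of $\gamma$ through the decomposition $\mu=\mu_0+\mu_1$ supported on $\gamma(n,\delta)$ and its complement, rather than through your weak-$\ast$/pigeonhole argument, and the two routes are interchangeable. The one point to tighten is that Lemma~\ref{lemma gamma cap B mu >0 } should be applied to a sub-curve of $W^u_{l_0}(x_0)$ of size comparable to $\delta$ around $x_0$ (as the paper does with $\Gamma\subset W^u_l(z)$), since otherwise the positive-measure set $E$ it produces could lie far from $x_0$ along the unstable curve, and its holonomy image could then miss $R^n_\gamma(y)$, which is only guaranteed to have size $\delta$ around $\Phi^n(y)$.
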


\begin{proof}
In order to apply Lemma~\ref{lemma gamma cap B mu >0 } we produce a sub-limit $\Gamma$ of iterates of $\gamma$ such that $\Gamma\cap D^-\not=\emptyset$.

Let $\mu$ be a $\gamma$-limit measure. By Lemma~\ref{lemma mu D^0}, we can take $l\in\Nn$ such that $\mu(D^0_l)>\frac{9}{10}$. By Lemma~\ref{lemma size delta cc}, there exists $\delta>0$, which we assume to be much smaller than the size of the local stable curves of points in $D^0_l$, such that for all large $n$,
$$\Leb_\gamma\left(\gamma(n,\delta)\right)>\frac{9}{10}.$$

Denote by $\lambda_0$, resp. $\lambda_1$, the restriction
of the measure $\Leb_\gamma$ to $\gamma(n,\delta)$, resp. $\gamma\setminus \gamma(n,\delta)$, so that $\Leb_\gamma=\lambda_0+\lambda_1$.
Setting $\mu_{\gamma,n}^0=  \frac{1}{n}\,\sum_ {j=0}^{n-1} \Phi^j_\ast \lambda_0$ and
$\mu_{\gamma,n}^1=  \frac{1}{n}\,\sum_ {j=0}^{n-1} \Phi^j_\ast \lambda_1$, we also have
$\mu_{\gamma,n}=\mu_{\gamma,n}^0+\mu_{\gamma,n}^1$.
Hence there exists weak-$\ast$ sublimits
$\mu_0$ of $\mu_{\gamma,n}^0$ and 
$\mu_1$ of $\mu_{\gamma,n}^1$, respectively, such that $\mu=\mu_0+\mu_1$. By construction $\mu_0$ has total mass $\geq 9/10$ while $\mu_1$ has total mass $\leq 1/10$.

Given a set $\gamma_0 \subset M$, let us say that
$z\in M$ is a $\gamma_0$-limit point if there exists a sequence
 of points $x_k\in \gamma_0$ and a sequence of times $n_k \to +\infty$
such that   $ z = \lim_{k\to+\infty} \Psi^{n_k} (x_k) $.
With this terminology, any point $z\in\supp(\mu)$ is a $\gamma$-limit point, and any point $z\in\supp(\mu_0)$ is a $\gamma(n,\delta)$-limit  point.  
Notice that $\mu(\supp(\mu_0))\geq \mu_0(\supp(\mu_0))\geq 9/10$. Therefore
$\mu(\supp(\mu_0)\cap D^0_l)>4/5$.
Hence, if $z\in \supp(\mu_0)\cap D^0_l$ and $\Gamma$ is an $h$-curve $\Gamma\subset W^u_l(z)$ of size $\delta$ around $z$, then $\Gamma$ is accumulated by 
forward iterates $\gamma_n$ of $\gamma$ with $\len{\gamma_n}\geq \delta$.

Applying Lemma
~\ref{lemma gamma cap B mu >0 }
to $\Gamma$, we have $\Leb_\Gamma\left( B(\mu_c)\right)>0$ for some ergodic SRB measure $\mu_c$, which is an ergodic component of a $\Gamma$-limit measure. 

Since $\Gamma$ is accumulated by forward iterates $\gamma_n$ of $\gamma$ with $\len{\gamma_n} \geq \delta$, the Lipschitz continuity of the stable holonomy (see Lemma~\ref{Lipschitz holomomy}) implies that $\Leb_{\gamma_n}\left( B(\mu_c)\right)>0$, and so $\Leb_\gamma\left( B(\mu_c)\right)>0$ because the restriction of $\Phi^{-n}$ along $\gamma_n$ is affine.
\end{proof}

\begin{proof}[Proof of Theorem~\ref{thm  union of basins}]
Let $\mu_1,\ldots, \mu_r$ be the ergodic SRB measures of $\Phi$. Define 
$$B= B(\mu_1)\cup \cdots \cup B(\mu_r) \quad\text{and}\quad A=M\setminus B. $$ 
Assume that $\Leb(A)>0$. We will derive a contradiction from this assumption. 

By Fubini's Theorem, there is an $ h $-curve $\gamma$ such that $\len{\gamma\cap A}>0$. By Lemma~\ref{lemma size delta cc}, for some small $\delta>0$ and some $n_0\in\Nn$, we have $\len{\gamma(n,\delta)\cap A}>0$ for all $n\geq n_0$.

Take a Lebesgue density point $x\in \gamma(n,\delta)\cap A$, and consider a strictly increasing sequence $\{n_i\} $ such that the sequence of $h$-curves $\{ R^{n_i}_\gamma(x)\} $ converges to some $h$-curve $\Gamma$ of length $\geq \delta$.
Notice that the space of all $h$-curves with $\len{\Gamma}\geq \delta$ endowed with the Hausdorff distance is compact.

By Lemma~\ref{len gamma cap B(mu) >0}, $\Leb_\Gamma(B)>0$. Hence, Lemma~\ref{lemma gamma D^+} gives an $l\in\Nn$ such that
$$
\eta:= \len{\Gamma\cap \hat{D}^+_{l}  
\cap B} >0.
$$
Since the local stable curves of points of $ \Gamma \cap \hat{D}^+_{l} \cap B$ have uniform size $\delta_l$, they must intersect $R^{n_i}_\gamma(x)$ for all $  i $ sufficiently large. The Lipschitz continuity of the stable holonomy (see Lemma~\ref{Lipschitz holomomy}) implies
$$\len{ R^{n_i}_\gamma(x) \cap
B } \geq
\frac{\eta}{2}\, \len{R_\gamma^{n_i}(x)}\;.$$
Consider the curve $\gamma':= \Phi^{-n_i}(R^{n_i}_\gamma(x))$. Since the restriction of $\Phi^{n_i}$ to $\gamma'$ is affine, we conclude
$$ \len{ \gamma' \cap B } \geq \frac{\eta}{2}\,\len{\gamma'}\;. $$

However, $x$ is a density point of $A\cap \gamma$, and so the proportion 
$$\frac{\len{\gamma'\cap B}}{\len{\gamma'}}$$ can be made arbitrarily small by choosing a sufficiently small $h$-curve $\gamma'$ around $x$. This fact contradicts the previous conclusion. 
\end{proof}


\bibliographystyle{plain}

\end{document}